\newcommand{\R}{\mathbb{R}}
\newcommand{\N}{\mathbb{N}}
\newcommand{\Z}{\mathbb{Z}}
\newcommand{\SL}{\text{SL}}
\newcommand{\SO}{\text{SO}}
\newcommand{\bb}{\normalfont \textbf{b}}
\newcommand{\cc}{\normalfont \textbf{c}}
\newcommand{\e}{\epsilon}
\newcommand{\de}{\delta}
\newcommand{\la}{\lambda}
\newcommand{\La}{\Lambda}
\newcommand{\ga}{\gamma}
\newcommand{\Ga}{\Gamma}
\newtheorem{theorem}{Theorem}[section]
\newtheorem{lemma}[theorem]{Lemma}
\newtheorem{proposition}[theorem]{Proposition}
\newtheorem{corollary}[theorem]{Corollary}
\newtheorem{claim}{Claim}
\newtheorem{tad}[theorem]{Theorem and Definition}
\theoremstyle{definition}
\newtheorem{definition}[theorem]{Definition}
\newtheorem{example}[theorem]{Example}
\theoremstyle{remark}
\newtheorem{remark}[theorem]{Remark}
\numberwithin{equation}{section}
\title{Some measure-theoretical and dynamical properties of successive minima on the space of unimodular lattices}
\author{Hao Xing}
\begin{document}
\date{}
\maketitle

\begin{abstract}
In this article, we study the relation between lattice basis and successive minima and give an estimate for the measure-theoretical distribution of successive minima. As consequences, we also discuss some logarithm laws associated to higher sucessive minima.
\end{abstract}

\section{Some properties of successive minima}

\newcommand{\pl}{$\pi_1(\Lambda)$}

\subsection{Lattice basis and successive minima}

Recall that for a positive integer $d$ and a lattice $\Lambda \subset \mathbb R^d$, and for each $j = 1,\dots, d$, The $j$-th minimum of a lattice $\Lambda \subset \mathbb R^d$, denoted $\lambda_j (\Lambda)$, is the
infimum of $\lambda$ such that the set $\{r \in \Lambda : \|r\| \le  \lambda \}$ contains $j$ linearly independent vectors. (with respect to the $l^2$ norm on $\mathbb R^d$).

A natural question is, can the successive minima always attained by a basis of the rank $d$ lattice $\Lambda$? In other words, does there exist a basis $\{v_1,\dots,v_d\}$ of $\Lambda$ such that 
\begin{equation*}
    \|v_j\|=\lambda_j, ~\text{for}~ j\in \{1,2,\dots,d\}.
\end{equation*}

The answer is positive if and only if $d\le 4$, as are shown in the following theorem and example

\begin{theorem}\label{thm:A.1}
Let $\Lambda$ be a lattice $\mathbb R^d$. Assume that $d\le 4$, then there exist a basis $\{v_1,\dots,v_d\}$ of $\Lambda$ such that 
$$\|v_j\|=\lambda_j, ~\text{for}~ j\in \{1,2,\dots,d\}.$$
\end{theorem}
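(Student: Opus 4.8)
The plan is to fix a linearly independent tuple $v_1,\dots,v_d\in\Lambda$ with $\|v_j\|=\lambda_j$ for every $j$ — such a tuple exists because $\Lambda$ is discrete, so the infima defining the $\lambda_j$ are attained and one may select the $v_j$ greedily, the shortest lattice vector outside $\mathrm{span}(v_1,\dots,v_{j-1})$ having norm exactly $\lambda_j$ — and, among all such tuples, to choose one for which the index $N:=[\Lambda:\langle v_1,\dots,v_d\rangle]$ is minimal. It then suffices to show $N=1$, and I argue by contradiction: assume $N\ge 2$.

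Since $\langle v_1,\dots,v_d\rangle\subsetneq\Lambda$, there is a least index $m$ (with $2\le m\le d$, as $v_1$ is primitive) for which $\langle v_1,\dots,v_m\rangle$ is strictly contained in $\Pi_m:=\Lambda\cap\mathrm{span}_{\mathbb{R}}(v_1,\dots,v_m)$; by minimality of $m$ one has $\langle v_1,\dots,v_{m-1}\rangle=\Pi_{m-1}$. Now $\Pi_m/\Pi_{m-1}\cong\mathbb{Z}$, and since $\langle v_1,\dots,v_m\rangle=\Pi_{m-1}+\mathbb{Z}v_m$ is a proper subgroup of $\Pi_m$, the cyclic subgroup generated by the image of $v_m$ has index $c\ge 2$ in $\Pi_m/\Pi_{m-1}$; choosing $u\in\Pi_m$ whose image generates $\Pi_m/\Pi_{m-1}$, we may write $v_m=b_1v_1+\dots+b_{m-1}v_{m-1}+c\,u$ with $b_i\in\mathbb{Z}$ (and $c\ge 2$, after possibly replacing $u$ by $-u$). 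Decompose $u=u^{\parallel}+u^{\perp}$, with $u^{\parallel}$ the orthogonal projection of $u$ onto $\mathrm{span}(v_1,\dots,v_{m-1})$. Subtracting from $u$ a suitable element of $\langle v_1,\dots,v_{m-1}\rangle$ (nearest-plane rounding against the Gram--Schmidt vectors $v_1^{*},\dots,v_{m-1}^{*}$, which does not change the image of $u$ in $\Pi_m/\Pi_{m-1}$), I may assume $\|u^{\parallel}\|^2\le\tfrac14\sum_{i<m}\|v_i^{*}\|^2\le\tfrac14\sum_{i<m}\lambda_i^2$. Since $v_m^{\perp}=c\,u^{\perp}$, we also have $\|u^{\perp}\|=\|v_m^{\perp}\|/c\le\|v_m\|/c\le\lambda_m/2$. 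Adding the two contributions gives the estimate
\[
\|u\|^2\ \le\ \tfrac14\sum_{i=1}^{m-1}\lambda_i^2+\tfrac14\lambda_m^2\ \le\ \tfrac{m}{4}\,\lambda_m^2,
\]
and $v_1,\dots,v_{m-1},u$ are linearly independent because $u\notin\mathrm{span}(v_1,\dots,v_{m-1})$.

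If $m\le 3$ the estimate gives $\|u\|<\lambda_m$. Let $k$ be the largest index with $\lambda_k<\lambda_m$ (possibly $k=0$), so $\lambda_{k+1}=\dots=\lambda_m$; then $v_1,\dots,v_k,u$ are $k+1$ linearly independent lattice vectors each of norm $<\lambda_{k+1}$, contradicting the definition of the successive minimum $\lambda_{k+1}$. Since $d\le 4$, the only case not yet covered is $m=d=4$, where the estimate only yields $\|u\|\le\lambda_4$; if $\|u\|<\lambda_4$ we conclude exactly as just now. If $\|u\|=\lambda_4$, then $(v_1,v_2,v_3,u)$ is once more a linearly independent tuple realizing the successive minima; but $\langle v_1,v_2,v_3\rangle=\Pi_3$ and $u$ maps to a generator of $\Pi_4/\Pi_3$, so $\langle v_1,v_2,v_3,u\rangle=\Pi_4=\Lambda$ — a tuple of index $1<N$ — contradicting the minimality of $N$. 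In every case we reach a contradiction, hence $N=1$ and $\{v_1,\dots,v_d\}$ is the desired basis.

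I expect the genuine difficulty to be concentrated in the borderline case $m=d=4$ with $\|u\|=\lambda_4$. The inequality $\|u\|^2\le\tfrac{m}{4}\lambda_m^2$ is the crux of the argument: for $m\le 3$ it forces $\|u\|<\lambda_m$ outright, for $m=4$ it is exactly tight, and for $m\ge 5$ its right-hand side exceeds $\lambda_m^2$ and gives no information — which is precisely why $4$ is the threshold. Indeed, in dimension $5$ the same construction produces a $u$ with $\|u\|>\lambda_5$, matching the counterexample that must accompany this theorem. Consequently almost all of the arithmetic content lies in converting the tight case $\|u\|=\lambda_4$ into an index reduction, here achieved by replacing $v_4$ with $u$.
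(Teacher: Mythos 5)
Your argument is correct, and it takes a genuinely different route from the paper's. The paper works inside the parallelepiped spanned by minima vectors $v_1,\dots,v_d$: for $d=2,3$ it invokes a Euclidean lemma (an interior point of a parallelogram or parallelepiped is strictly closer to some vertex than the longest relevant edge), and for $d=4$ it bounds $\min_{v_0\in\Lambda_0}\|v-v_0\|^2$ by the weighted vertex average $\sum_i t_i(1-t_i)\|v_i\|^2\le\tfrac14\sum_i\|v_i\|^2$, then squeezes the equality case to show the $v_i$ are orthogonal of equal length and the only extra coset is $\tfrac12(v_1+v_2+v_3+v_4)$, for which an alternative basis is exhibited. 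You reach the same $\tfrac14\sum_{i\le m}\lambda_i^2$ threshold by a relative rank-$m$ argument: nearest-plane reduction against the Gram--Schmidt vectors controls the component of $u$ inside $\mathrm{span}(v_1,\dots,v_{m-1})$, while the divisibility $v_m\equiv c\,u$ with $c\ge 2$ in $\Pi_m/\Pi_{m-1}\cong\Z$ controls the orthogonal component by $\lambda_m/2$; for $m\le 3$ this contradicts the definition of the minima outright, and in the tight case $m=4$, $\|u\|=\lambda_4$ you exchange $v_4$ for $u$, bypassing the paper's explicit identification of the exceptional configuration. Your approach buys a uniform treatment of $d=2,3,4$ and makes the threshold transparent ($\tfrac m4\lambda_m^2\le\lambda_m^2$ fails first at $m=5$), at the cost of more algebraic bookkeeping; note also that the minimal-index device $N$ is not really needed, since in the tight case you have directly produced a basis $(v_1,v_2,v_3,u)$ of $\Pi_4=\Lambda$ realizing the minima, and your greedy justification that the minima are attained by independent lattice vectors can simply be replaced by citing that standard fact, as the paper implicitly does.
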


\vspace{1cm}
The case when $d=1$ is trivial. To prove this theorem for the cases $d=2,3$, we need the following lemma from Euclidean geometry.

\begin{lemma}~\\
(1) The minimal distance from any point in the interior of a parallelogram in $\mathbb R^2$ to its vertices is always strictly less than the maximal length of the edges of the parallelogram.~\\
(2) The minimal distance from a point in the interior of a parallelepiped  in $\mathbb R^3$ to its vertices is always strictly less than the maximal length of three linearly independent vectors form by the vertices , with at least two of them being the edges of the parallelopiped. In particular, these three vector will span the three dimensional lattice spanned by this parallelopiped.
\end{lemma}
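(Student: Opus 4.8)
The plan is to derive both parts from one elementary fact about simplices: \emph{in a non-degenerate simplex with vertices $X_0,X_1,\dots,X_k$, every point $P$ of the simplex satisfies $\|P-X_0\|\le\max_{1\le i\le k}\|X_i-X_0\|$, with equality only if $P$ is one of the $X_i$.} Indeed, writing $P=\sum_{i=0}^{k}\lambda_iX_i$ with $\lambda_i\ge0$ and $\sum_i\lambda_i=1$ gives $P-X_0=\sum_{i=1}^{k}\lambda_i(X_i-X_0)$, hence $\|P-X_0\|\le\sum_{i=1}^{k}\lambda_i\|X_i-X_0\|\le\bigl(\sum_{i=1}^{k}\lambda_i\bigr)\max_i\|X_i-X_0\|\le\max_i\|X_i-X_0\|$; and since the vectors $X_i-X_0$ $(1\le i\le k)$ are linearly independent, equality in the triangle inequality forces all but one $\lambda_i$ $(i\ge 1)$ to vanish, and then the remaining inequalities force $P=X_i$ for that index. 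Consequently the whole task reduces to cutting the parallelogram (resp.\ parallelepiped) into triangles (resp.\ tetrahedra) whose vertices are among the vertices of the body, and choosing in each piece an apex $X_0$ whose incident edges are exactly the vectors one wishes to control.

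For (1): label the parallelogram $ABCD$ cyclically and set $\ell=\max(\|AB\|,\|BC\|)$, the largest edge length (recall $\|AB\|=\|CD\|$, $\|BC\|=\|DA\|$). The diagonal $AC$ cuts it into the triangles $ABC$ and $ACD$, and an interior point $P$ lies in at least one of them and is not a vertex of the parallelogram. If $P\in ABC$, apply the fact with apex $B$, whose two incident edges in that triangle, $BA$ and $BC$, are edges of the parallelogram: then $\|P-B\|<\max(\|BA\|,\|BC\|)\le\ell$, strictly because $P\notin\{A,C\}$. If $P\in ACD$, the symmetric argument with apex $D$ gives $\|P-D\|<\ell$. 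Either way some vertex lies at distance $<\ell$ from $P$.

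For (2): write the parallelepiped as $\{\,xa+yb+zc:0\le x,y,z\le1\,\}$ with $a,b,c$ linearly independent, and decompose it into the six tetrahedra coming from the six orderings of the coordinates $x,y,z$ (the image of the standard triangulation of the unit cube). Each such tetrahedron is $\mathrm{conv}\{0,\ \sigma_1,\ \sigma_1+\sigma_2,\ \sigma_1+\sigma_2+\sigma_3\}$ for a permutation $(\sigma_1,\sigma_2,\sigma_3)$ of $(a,b,c)$, and all four of its vertices are vertices of the parallelepiped. I take the apex to be $\sigma_1$: its three incident edge-vectors are $-\sigma_1$, $\sigma_2$ and $\sigma_2+\sigma_3$ — two edges of the parallelepiped together with a face diagonal — they are linearly independent, and the base change from $(a,b,c)$ to $(\sigma_1,\sigma_2,\sigma_2+\sigma_3)$ has integer matrix of determinant $\pm1$, so $\{\sigma_1,\sigma_2,\sigma_2+\sigma_3\}$ is a basis of $\mathbb{Z}a+\mathbb{Z}b+\mathbb{Z}c$ made of three linearly independent vertex-differences, two of which are edges. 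An interior point $P$ lies in one of the six tetrahedra and is not a vertex of the parallelepiped, so the fact with $X_0=\sigma_1$ gives $\min_V\|P-V\|\le\|P-\sigma_1\|<\max(\|\sigma_1\|,\|\sigma_2\|,\|\sigma_2+\sigma_3\|)$, which is exactly the asserted inequality; the ``in particular'' is the basis statement just verified.

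The only delicate points are bookkeeping. First one must make the uniform description of the six tetrahedra precise and check that the chosen apex really does carry two parallelepiped-edges plus a lattice-completing third edge; I expect to verify this once for the representative ordering $x\ge y\ge z$ — whose tetrahedron is $\mathrm{conv}\{0,a,a+b,a+b+c\}$ with apex $a$ and incident edges $-a,\,b,\,b+c$ — and then invoke the symmetry among $a,b,c$. The pitfall to avoid is the ``wrong'' apex: the vertex $0$ sees the edge $\sigma_1$ but also two diagonals, so it would violate the ``at least two edges'' clause, whereas each of the two ``middle'' vertices $\sigma_1$ and $\sigma_1+\sigma_2$ works. Second one must confirm strictness throughout, which is immediate since equality in the triangle inequality is obstructed by the linear independence of the edges at the apex, and an interior point of a convex body is never one of its vertices. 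I do not foresee any genuine obstacle beyond laying out this case-check cleanly.
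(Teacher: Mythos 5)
Your proof is correct, and while its skeleton matches the paper's — cut the parallelogram into two triangles along a diagonal, cut the parallelepiped into six tetrahedra whose vertices are vertices of the body, locate the interior point in one piece, and bound its distance to a well-chosen apex by the maximal length of the three vectors from that apex to the remaining vertices — the key estimate is obtained by a genuinely different argument. The paper argues synthetically: in the triangle it extends the segment from the apex through the point to the opposite side and compares with the foot of the perpendicular, and in a tetrahedron it introduces auxiliary points $Y,Z$ to reduce to two applications of the planar case. You instead prove one simplex inequality by writing the point in barycentric coordinates and using the equality case of the triangle inequality, with linear independence of the edge vectors at the apex ruling out equality except at vertices; this treats (1) and (2) (indeed any dimension) uniformly, delivers strictness without case analysis, and makes explicit two things the paper leaves implicit — the precise triangulation into the tetrahedra $\mathrm{conv}\{0,\sigma_1,\sigma_1+\sigma_2,\sigma_1+\sigma_2+\sigma_3\}$ and the unimodular change of basis showing that $\{\sigma_1,\sigma_2,\sigma_2+\sigma_3\}$ spans the lattice, which is exactly the ``in particular'' clause. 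A small difference in output: your apex always pairs two parallelepiped edges with a face diagonal, whereas the paper's choice in some tetrahedra pairs two edges with a space diagonal; both choices satisfy the statement, and your observation that the vertex $0$ would be a wrong apex (only one incident edge) is a point the paper does not discuss.
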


\begin{proof}~\\
For the part (1), observe that a parallelogram $ABCB'$ can be divided into two triangles $ABC$ and $AB'C$, and any point $D$ in the interior of $ABCB'$ must fall in either the triangle $ABC$ or the triangle $AB'C$

\begin{figure}

\begin{center}

\begin{tikzpicture}[scale=1.2]
\tkzDefPoint(-4,0){A}
\tkzDefPoint(2,3){B}
\tkzDefPoint(-2,-3){B'}
\tkzDefPoint(4,0){C}
\tkzDefPoint(1,1){D}
\tkzDefPoint(0.5,0){E}

\tkzDrawSegment[color=blue, thick](A,B)
\tkzDrawSegment[color=blue, thick](C,B)
\tkzDrawSegment[color=blue, thick](A,B')
\tkzDrawSegment[color=blue, thick](C,B')
\tkzDrawSegment[color=blue, thick](A,C)
\tkzDrawSegment[color=red, thick](B,D)
\tkzDrawSegment[color=red, dotted](E,D)

\tkzDefPointBy[projection=onto A--C](B)
\tkzGetPoint{F}
\tkzDrawSegment[green](B,F)
\tkzMarkRightAngle[,size=0.2,color=green](B,F,C)

\tkzDrawPoints(A,B,B',C,D,E,F)
\tkzLabelPoints[below](C,D,E,F)
\tkzLabelPoints[above, right](B)
\tkzLabelPoints[below](B')
\tkzLabelPoints[below, left](A)

\end{tikzpicture}
\end{center}
\caption{The parallelogram case}
\end{figure}
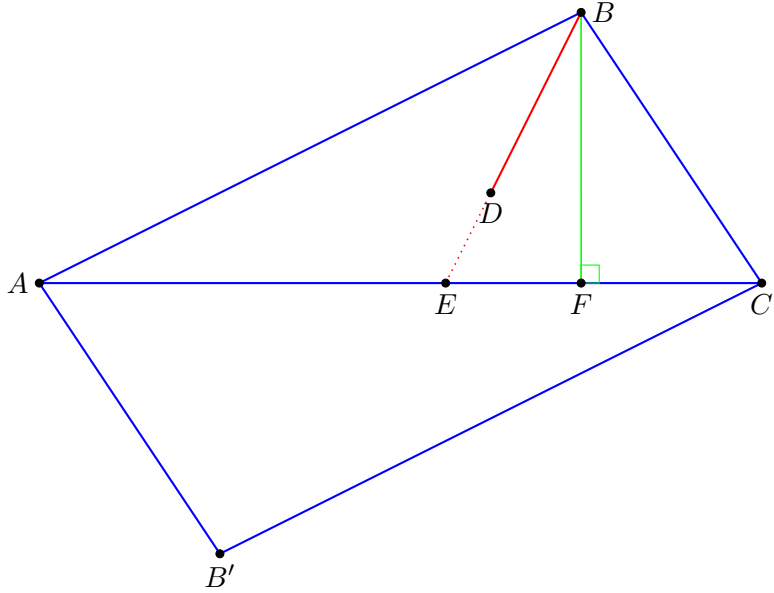

By drawing a line perpendicular to the line $AC$ through the point $B$, we easily see
\begin{equation*}
    |BD|\le |BE| < \max\{|AB|,|BC|\}.
\end{equation*}

For the part (2), first observe that a parallelepiped can be divided into six tetrahedra and any point $x$ in the interior of the parallelepiped, say $ABCDEFGH$ must fall into one of the six.

\newcommand{\Depth}{5}
\newcommand{\Height}{4}
\newcommand{\Width}{6}
\newcommand{\Skew}{1}
\tikzstyle{place}=[circle,draw=black,fill=black,inner sep=1, outer sep=0]

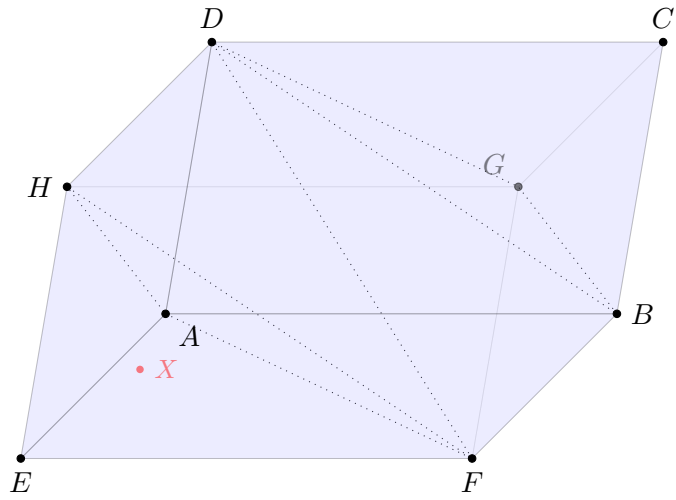
\begin{figure}
\begin{center}

\begin{tikzpicture}[scale=1]

    \coordinate (a) at (0,0,0);
    \coordinate (b) at (\Width,0,0);
    \coordinate (c) at (\Width + \Skew,\Height,\Skew);
    \coordinate (d) at (\Skew,\Height,\Skew);
    \coordinate (e) at (0,0,\Depth);
    \coordinate (f) at (\Width,0,\Depth);
    \coordinate (g) at (\Width + \Skew,\Height,\Depth + \Skew);
    \coordinate (h) at (\Skew,\Height,\Depth + \Skew);
    \coordinate (x) at (0.2*\Width,0.2*\Height,0.8*\Depth);

    \draw [dotted] (a) -- (f) -- (h) -- (a);
    \draw [dotted] (b) -- (g) -- (d) -- (b);
    \draw [dotted] (f) -- (d);
   \draw[fill=blue!30,opacity=0.2] (a) -- (b) -- (c) -- (d) -- (a);
    \draw[fill=blue!30,opacity=0.2] (a) -- (e) -- (f) -- (b) -- (a);
    \draw[fill=blue!30,opacity=0.2] (a) -- (d) -- (h) -- (e) -- (a);
    \draw[fill=blue!30,opacity=0.05] (d) -- (c) -- (g) -- (h) -- (d);
    \draw[fill=blue!30,opacity=0.05] (e) -- (f) -- (g) -- (h) -- (e);
    \draw[fill=blue!30,opacity=0.05] (b) -- (c) -- (g) -- (f) -- (b);

    \node[place, label=below right:{$A$}] at (a) {};
    \node[place, label=right:{$B$}] at (b) {};
    \node[place, label=above:{$C$}] at (c) {};
    \node[place, label=above:{$D$}] at (d) {};
    \node[place, label=below:{$E$}] at (e) {};
    \node[place, label=below:{$F$}] at (f) {};
    \node[place, opacity=0.5, label={above left, opacity=0.5:$G$}] at (g) {};
    \node[place, label=left:{$H$}]  at (h) {};
    \node[circle, fill=red, opacity=0.5, inner sep=1pt, label={right, font=\small, text= red, opacity=0.5:$X$}]  at (x) {};    

\end{tikzpicture}   
\end{center}

\caption{$X$ must fall into one of six tetrahedra.}
\end{figure}

\begin{figure}
\begin{center}
    
\begin{tikzpicture}[scale=1.5]
    \coordinate (a) at (0,0,0);
    \coordinate (b) at (\Width,0,0);
    \coordinate (c) at (\Width + \Skew,\Height,\Skew);
    \coordinate (d) at (\Skew,\Height,\Skew);
    \coordinate (e) at (0,0,\Depth);
    \coordinate (f) at (\Width,0,\Depth);
    \coordinate (g) at (\Width + \Skew,\Height,\Depth + \Skew);
    \coordinate (h) at (\Skew,\Height,\Depth + \Skew);
    \coordinate (x) at (0.2*\Width,0.2*\Height,0.8*\Depth);
    \coordinate (y) at (0.296*\Width,0.7,\Depth);
    \coordinate (z) at (0.281*\Width,0,\Depth);

    \draw[fill=blue!30, opacity=0.3] (a) -- (f) -- (h) -- (a);
    \draw[fill=blue!30, opacity=0.3] (a) -- (e) -- (f) -- (a);
    \draw[fill=blue!30, opacity=0.3] (a) -- (e) -- (h) -- (a);
    \draw[fill=blue!30, opacity=0.1] (e) -- (f)  -- (h) -- (e);
    \draw[dotted] (h) -- (z);

    \node[place, label=below right:{A}] at (a) {};
    \node[place, label=below:{$H$}] at (e) {};
    \node[place, label=below:{$F$}] at (f) {};
    \node[place, label=left:{$E$}]  at (h) {};
    \node[circle, fill=red, opacity=0.5, inner sep=1pt, label={left, font=\small, text= red, opacity=0.5:$X$}]  at (x) {};
    \node[circle, fill=red, inner sep=1pt, label={below right, text= red:$Y$}]  at (y) {};
    \node[circle, fill=red, inner sep=1pt, label={below right, text= red:$Z$}]  at (z) {};
    \draw [dotted] (h) -- ($(h)!3.14cm!(x)$);
    \draw (a) -- ($(a)!1.95cm!(y)$);
\end{tikzpicture}
\end{center}
\caption{Construction of points $Y,Z$ in the proof.\label{construction}}
\end{figure}
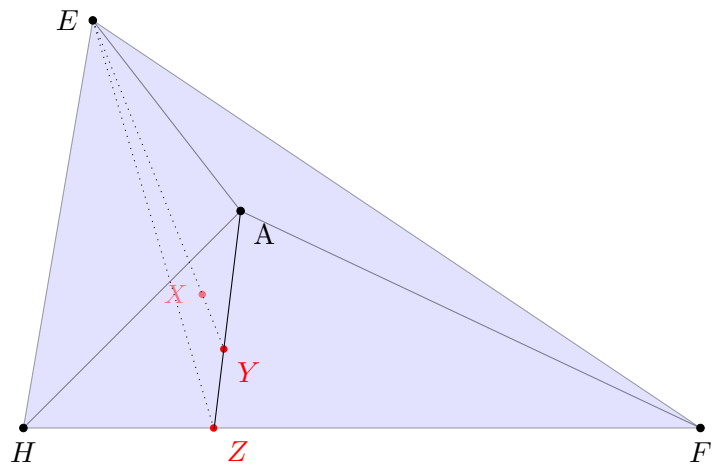

If $X$ falls in the tetrahedra $AFEH$. It follows from the first part of this lemma that 
\begin{equation*}
    |EX|\le |EY| \le \max\{|EA|,|EZ|\} < \max\{|EH|,|EF|,|EA|\}, 
\end{equation*}

If $X$ falls in the tetrahedra $DHAF$. It follows from the first part of this lemma that 
\begin{equation*}
    |DX|\le |DY| \le \max\{|DA|,|DZ|\} < \max\{|DA|,|DH|,|DF|\}, 
\end{equation*}

If $X$ falls in the tetrahedra $BAFD$. It follows from the first part of this lemma that 
\begin{equation*}
    |BX|\le |BY| \le \max\{|BA|,|EZ|\} < \max\{|BA|,|BF|,|BD|\}, 
\end{equation*}

where the construction of auxiliary points and segments as illustrated in the figure \ref{construction} above.

\end{proof}

\begin{proof}[Proof of the theorem for the case d=2,3:]~\\
We do this for $d=3$ and the case $d=2$ is only simpler. Let $v_1, v_2, v_3$ be any linearly independent vectors in $\Lambda$ such that 
\begin{equation*}
    \|v_j\|=\lambda_j, ~\text{for}~ j=\{1,2,3\}.
\end{equation*}

Let $\Lambda_0$ be the lattice spanned by those three vectors. Consider the fundamental domain
$$F:=\{t_1 v_1+t_2 v_2+t_3 v_3: t_i \in [0,1)\}$$
of $\Lambda_0$.

The closure of $F$ is the parallelepipiped spanned by vectors $v_1,v_2,v_3$ at the origin of $\mathbb R^3$ and it follows that $\mathbb R^3=F+\Lambda_0$.

Suppose on the contrary that $\Lambda_0$ is a proper sublattice of $\Lambda$, then there exists a vector $x \in \Lambda - \Lambda_0$. By translating $x$ with a vector in $\lambda_0$, we may assume without loss of generality that 
$$x=t_1 v_1+t_2 v_2+t_3 v_3,$$
where $t_i \in (0,1)$ ($t_i$ cannot be equal to zero since $x \notin \Lambda_0$). So $x$ is in the interior of the parallelepipiped.

By our lemma above, noticing that the length of each edge in the parallelepipiped is equal to $\|v_1\|,\|v_2\|$ or $\|v_3\|$, there exists a vertex $w$ of the parallelepipiped spanned by vectors $v_1,v_2,v_3$ at the origin such that 
$$\|x-w\| < \max\{ \|v_j\|:j=1,2,3\}=\|v_3\|.$$

Translating the vector $x-w$ to the origin. It follows that $x-w, v_1, v_2$ are still linearly independent and this lead to the contradiction to the assumption that $v_3 = \lambda_3(\lambda)$. Therefore we must have 
$$\Lambda_0 = \Lambda,$$
namely $v_1,v_2,v_3$ form a basis of $\lambda$.

\end{proof}

We need the following lemma for the proof of $d=4$ case:
\begin{lemma}
For $k\ge 1$ and $y_i\in \R$ for all $1\le i\le k$, we have the identity
$$S_k:=\sum x_1x_2\cdots x_k = 1,$$
where the sum is over all $2^k$ possible choices of $x_i=y_i$ or $x_i=1-y_i$.
\end{lemma}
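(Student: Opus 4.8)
The statement is just the expansion of a product, so the plan is to avoid any combinatorial bookkeeping by factoring. Observe that each summand of $S_k$ is obtained by choosing, independently for each index $i$, one of the two terms $y_i$ or $1-y_i$ and multiplying the chosen terms together. By the distributive law this is exactly the expansion of the product $\prod_{i=1}^{k}\bigl(y_i+(1-y_i)\bigr)$. Since $y_i+(1-y_i)=1$ for every $i$, this product equals $1$, giving $S_k=1$.

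If one prefers an argument that does not invoke the full distributive law over $2^k$ terms at once, I would instead run an induction on $k$. For $k=1$ the claim is the identity $y_1+(1-y_1)=1$. For the inductive step, partition the $2^k$ choices according to whether the last coordinate is $x_k=y_k$ or $x_k=1-y_k$; each of the two resulting subsums is $y_k$ (respectively $1-y_k$) times the sum over all $2^{k-1}$ choices of $x_1,\dots,x_{k-1}$, which by the inductive hypothesis is $S_{k-1}=1$. Hence $S_k=y_k\cdot 1+(1-y_k)\cdot 1=1$.

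Either way there is essentially no obstacle here: the only point requiring a little care is phrasing the indexing of the $2^k$ terms precisely enough that the factorization — equivalently, the grouping by the last coordinate — is manifestly a bijection onto the set of summands; once that is set up the computation is immediate. I would present the one-line factorization as the main argument and, if desired, mention the inductive phrasing as an alternative.
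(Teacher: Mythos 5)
Your proposal is correct, and your inductive alternative (grouping the $2^k$ choices by the last coordinate and applying the hypothesis to $S_{k-1}$) is exactly the paper's proof; the one-line factorization $S_k=\prod_{i=1}^{k}\bigl(y_i+(1-y_i)\bigr)=1$ is just a compressed form of the same idea. No gaps to report.
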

\begin{proof}
We perform induction on $k$.

When $k=1$, this sum is simply $1-y_1+y_1=1$.

Assume $S_{k-1}=1$. For general $k$, we observe that $S_k=y_iS_{k-1}+(1-y_i)S_{k-1}=S_{k-1}=1$.
\end{proof}

\begin{proof}[Proof of the theorem for the case d=4:]~\\
Let $v_1, v_2, v_3,v_4$ be any linearly independent vectors in $\Lambda$ such that 
\begin{equation*}
    \|v_j\|=\lambda_j(\Lambda), ~\text{for}~ j=\{1,2,3,4\}.
\end{equation*}
As in the proof of cases $d=2,3$, we let $\Lambda_0$ denote  the lattice spanned by $v_1, v_2, v_3,v_4$.

If $\lambda_0$ is a proper sublattice of $\lambda$, then there exist $v \in \Lambda - \Lambda_0$ such that
$$v=\sum_{i=1}^4 t_i v_i, t_i\in \mathbb R$$

and without loss of generality, we may assume that $t_i\in [0,1)$ for any $i=1,2,3,4$.

Namely $v$ lives in $$\mathscr{P}:=\left \{\sum_{i=1}^4 t_i v_i, t_i\in \mathbb [0,1) \right \}.$$

\vspace{1cm}
\begin{claim}
For any $v_0 \in \Lambda_0$, and any $v\in \La -\Lambda_0$
$$\|v-v_0\| \ge \|v_i\|,~\forall i=1,2,3,4$$
\end{claim}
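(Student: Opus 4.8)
The plan is to recast the claim in the equivalent form: every $w \in \La \setminus \Lambda_0$ satisfies $\|w\| \ge \lambda_4$, where $\lambda_4 = \|v_4\| = \max_{1 \le i \le 4}\|v_i\|$. This reformulation suffices, because for $v \in \La\setminus\Lambda_0$ and $v_0 \in \Lambda_0$ the difference $w := v-v_0$ again lies in $\La\setminus\Lambda_0$ (if $v-v_0 \in \Lambda_0$ then $v \in \Lambda_0$), and since $\|v_i\| = \lambda_i \le \lambda_4$ for all $i$, the estimate $\|v-v_0\| = \|w\| \ge \lambda_4$ immediately gives $\|v-v_0\| \ge \|v_i\|$ for every $i$.

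Assume for contradiction that some $w \in \La\setminus\Lambda_0$ has $\|w\| < \lambda_4$; since $0 \in \Lambda_0$ we have $w \ne 0$, hence $\|w\| \ge \lambda_1$. Let $j$ be the largest index in $\{1,2,3,4\}$ with $\lambda_j \le \|w\|$; then $1 \le j \le 3$, and monotonicity of the successive minima gives $\lambda_i \le \|w\| < \lambda_{j+1}$ for all $i \le j$. Look at the finite set $B := \{r \in \La : \|r\| \le \|w\|\}$. It contains the linearly independent vectors $v_1, \dots, v_j$, and it cannot contain $j+1$ linearly independent vectors, for that would force $\lambda_{j+1} \le \|w\|$, against the choice of $j$. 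Therefore $B$ spans exactly the $j$-dimensional subspace $V := \mathrm{span}_{\R}(v_1, \dots, v_j)$, and in particular the vector $w \in B$ lies in $V$.

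Now pass to $\Lambda' := \La \cap V$, a lattice of rank $j$ that contains $v_1, \dots, v_j$ together with $w$. A sandwiching argument shows $\lambda_i(\Lambda') = \lambda_i(\La) = \|v_i\|$ for each $i \le j$: we have $\lambda_i(\La) \le \lambda_i(\Lambda')$ because $\Lambda' \subseteq \La$, and $\lambda_i(\Lambda') \le \|v_i\|$ because $v_1, \dots, v_i$ are $i$ linearly independent elements of $\Lambda'$ of norm at most $\lambda_i$. Thus $v_1, \dots, v_j$ realize the successive minima of $\Lambda'$. On the other hand $w \notin \Z v_1 + \cdots + \Z v_j$, for otherwise $w$ would lie in $\Z v_1 + \cdots + \Z v_j \subseteq \Lambda_0$, contradicting $w \notin \Lambda_0$; hence $\Z v_1 + \cdots + \Z v_j$ is a proper sublattice of $\Lambda'$. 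Since $j \le 3$, this contradicts the cases $d = 1, 2, 3$ already settled above — in the sharper form that these proofs actually establish, namely that any linearly independent family realizing the successive minima of a lattice of rank at most $3$ is a basis of that lattice. The contradiction proves $\|w\| \ge \lambda_4$, hence the claim.

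I do not expect a serious obstacle: the argument is a descent of the $d=4$ situation onto the already-known cases $d \le 3$. The one step that needs care is the identification of the span in the middle paragraph — one must rule out short vectors lying outside $\mathrm{span}_{\R}(v_1,\dots,v_j)$ — but this is precisely what the defining property of $\lambda_{j+1}$ provides. A purely expository caveat is that one invokes not the bare existence statement of Theorem \ref{thm:A.1} in dimensions $\le 3$, but the stronger conclusion, implicit in the proofs above, that the minimizing vectors themselves form a basis.
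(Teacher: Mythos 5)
Your argument is correct, but it is a genuinely different route from the paper's. The paper proves the claim by a one-step exchange: if $\|v-v_0\|<\|v_{i_0}\|$ for some $i_0$, it replaces $v_{i_0}$ by $v-v_0$ in the system $\{v_1,v_2,v_3,v_4\}$, asserts that the new family is still linearly independent because $v\in\Lambda-\Lambda_0$, and derives a contradiction with the minimality of the $\lambda_i$. You instead run a descent: assuming $\|w\|<\lambda_4$ for $w=v-v_0\in\Lambda\setminus\Lambda_0$, you use the defining property of $\lambda_{j+1}$ to force $w$ into $V=\mathrm{span}_{\R}(v_1,\dots,v_j)$ with $j\le 3$, pass to the rank-$j$ lattice $\Lambda\cap V$ whose minima are still realized by $v_1,\dots,v_j$, and invoke the already-settled cases $d\le 3$ of Theorem \ref{thm:A.1} in the sharper form their proofs actually give (any independent minima-realizing family is a basis), contradicting $w\notin\Z v_1+\cdots+\Z v_j$. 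What each approach buys: the paper's exchange is a one-liner, but its linear-independence step is delicate, since $v-v_0\notin\Lambda_0$ does not by itself prevent $v-v_0$ from lying in the real span of the three remaining $v_i$; your localization of $w$ via $\lambda_{j+1}$ is exactly what closes that kind of gap, at the modest cost of quoting the low-dimensional cases in their stronger form and of the (harmless) identification of a rank-$j$ sublattice of $\R^4$ with a full-rank lattice in $\R^j$. Both points you flag yourself, and both are legitimate, so your proof stands.
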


\begin{proof}[Proof of Claim]
\renewcommand\qedsymbol{\#}

If $\|v-v_0\|<\|v_{i_0}\|$ for some $i_0 \in \{1,2,3,4\}$, then $v_i$, $i\in \{1,2,3,4\} \setminus \{i_0\}$, together with $v-v_0$ would be still linearly independent (since $v\in \La-\Lambda_0$) and form a new system of successive minima with strictly smaller $\lambda_i$.
\end{proof}

\begin{claim} For any $v \in \mathscr P$ we have
\begin{equation*}
    \min_{v_0 \in \Lambda_0} \|v-v_0\|^2 \le \frac{1}{4}\sum_{i=1}^4 \|v_i\|^2.
\end{equation*}
\end{claim}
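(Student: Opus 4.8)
The plan is to exploit that the $2^{4}$ vertices of the closed parallelepiped $\overline{\mathscr{P}}$, namely the points $w_\epsilon := \sum_{i=1}^4 \epsilon_i v_i$ with $\epsilon = (\epsilon_1,\dots,\epsilon_4) \in \{0,1\}^4$, all lie in $\Lambda_0$ (they are integer combinations of the basis $v_1,\dots,v_4$). Hence it suffices to show $\min_{\epsilon \in \{0,1\}^4} \|v - w_\epsilon\|^2 \le \tfrac14 \sum_{i=1}^4 \|v_i\|^2$, and for that it is enough to exhibit a convex weighting of the vertex set under which the weighted average of $\|v-w_\epsilon\|^2$ is at most the right-hand side: the minimum is then no larger than the average.

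Write $v = \sum_{i=1}^4 t_i v_i$ with $t_i \in [0,1)$. For $\epsilon \in \{0,1\}^4$ set $x_i := t_i$ if $\epsilon_i = 1$ and $x_i := 1 - t_i$ if $\epsilon_i = 0$, and put $c_\epsilon := \prod_{i=1}^4 x_i$. Then $c_\epsilon \ge 0$, and by the preceding lemma (applied with $k = 4$ and $y_i = t_i$) we have $\sum_{\epsilon} c_\epsilon = 1$, so $(c_\epsilon)_\epsilon$ is a genuine convex combination. (Equivalently, this is the law of $w_\epsilon$ when the $\epsilon_i$ are independent with $\mathbb{P}(\epsilon_i = 1) = t_i$; the computation below is just the second-moment computation for that law.)

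Next I would compute $A := \sum_\epsilon c_\epsilon \|v - w_\epsilon\|^2$. Expand $\|v - w_\epsilon\|^2 = \sum_{i,j=1}^4 (t_i - \epsilon_i)(t_j - \epsilon_j)\langle v_i, v_j\rangle$ and use that $c_\epsilon = \prod_k x_k$ factorizes coordinatewise: summing over each coordinate $k \notin \{i,j\}$ contributes a factor $\sum_{\epsilon_k \in \{0,1\}} x_k = t_k + (1-t_k) = 1$. For $i \ne j$ the two remaining sums split, and the $i$-factor is $\sum_{\epsilon_i \in \{0,1\}} x_i(t_i - \epsilon_i) = t_i(t_i - 1) + (1-t_i)t_i = 0$, so every off-diagonal term drops out; for $i = j$ one gets $\sum_{\epsilon_i \in \{0,1\}} x_i(t_i-\epsilon_i)^2 = t_i(t_i-1)^2 + (1-t_i)t_i^2 = t_i(1-t_i)$. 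Hence $A = \sum_{i=1}^4 t_i(1-t_i)\|v_i\|^2 \le \tfrac14 \sum_{i=1}^4 \|v_i\|^2$, using $t_i(1-t_i) \le \tfrac14$. Since the $c_\epsilon$ are nonnegative and sum to $1$, some vertex $w_{\epsilon^\ast}$ satisfies $\|v - w_{\epsilon^\ast}\|^2 \le A$, and as $w_{\epsilon^\ast} \in \Lambda_0$ this gives $\min_{v_0 \in \Lambda_0}\|v-v_0\|^2 \le A \le \tfrac14\sum_{i=1}^4\|v_i\|^2$.

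The step I expect to be the crux is the coordinatewise ``diagonalization'' of the weighted average, i.e. checking that the mixed inner products $\langle v_i, v_j\rangle$ with $i \ne j$ cancel. This is precisely where the particular product weights $c_\epsilon$ (equivalently, the independence of the $\epsilon_i$) and the normalization $\sum_\epsilon c_\epsilon = 1$ furnished by the lemma are needed: with a less carefully chosen combination one would be left with cross terms that cannot be bounded by $\sum_i \|v_i\|^2$ alone. Everything else is routine bookkeeping.
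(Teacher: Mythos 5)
Your proof is correct and is essentially the paper's own argument: the same product weights $\prod_i x_i$ (with $x_i=t_i$ or $1-t_i$ according to the vertex), normalized to $1$ via the preceding lemma, with the cross terms $\langle v_i,v_j\rangle$ cancelling and the diagonal terms giving $\sum_i t_i(1-t_i)\|v_i\|^2\le\tfrac14\sum_i\|v_i\|^2$. The probabilistic (independent Bernoulli) reading is just a restatement of the same computation, so there is nothing substantively different to compare.
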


\begin{proof}[Proof of Claim]
\renewcommand\qedsymbol{\#}~\\
The vertices of $\mathscr{P}$ form the set:
$$\mathscr{V}:= \left \{\sum_{i=1}^{4} n_i v_i: n_i \in \{0,1\} \right\}.$$

In view of the preceding lemma, the idea here is to find a weighted sum of squared distances from $v$ to each vertices in $\mathscr{V}$. For $v=t_1v_1+t_2v_2+t_3v_3+t_4v_4$, we associae the weights $w(v_0)$ to each $\|v-v_0\|^2$ where $v_0\in \mathscr{V}$:

If $v_0=\sum_{i=1}^{4} n_i v_i, n_i \in \{0,1\}$, then $w(v_0):=\prod_{i=1}^4 ((2t_i-1)n_i+(1-t_i))$. For example, if $v_0=v_2+v_3$, then $v_0=(1-t_1)t_2t_3(1-t_4)$.

It follows immediately from the preceding lemma that 
$$\prod_{v_0\in V} w(v_0)=\sum_{x_i=1-t_i\text{ or }t_i} x_1x_2x_3x_4 =1.$$

The claim then follows from the following subclaim:
$$\sum_{v_0\in \mathscr{V}}w(v_0)\|v-v_0\|^2=\sum_{i=1}^4 t_i(1-t_i)\|v_i\|^2 \le \sum_{i=1}^4 \frac{1}{4}\|v_i\|^2.$$

Indeed, if we write $\|v-v_0\|^2=\langle \sum_i(t_i-n_i)v_i, \sum_i(t_i-n_i)v_i \rangle$ and
in view of lemma for the case when $k=3$, the coefficient for each $\langle v_i,v_i \rangle$ is $$(1-t_i)t_i^2+t_i (1-t_i)^2=(1-t_i)t_i.$$
In view of lemma for the case when $k=2$, the coefficient for each $\langle v_i,v_j \rangle$ where $i\ne j$, is 
$$2(1-t_i)(1-t_j)t_it_j+2(1-t_i)t_jt_i(t_j-1)+2t_i(1-t_j)(t_i-1)t_j+2t_it_j(t_i-1)(t_j-1)=0.$$
\end{proof}
\begin{align*}
    a
\end{align*}

Now we find the minimum of
\begin{equation*}
    \sum_{v_0 \in \mathscr{V}} \|v-v_0\|^2, 
\end{equation*}
for $v=t_1v_1+\cdots+t_4v_4 \in \mathscr{P} ,t_1, t_2, t_3, t_4 \in [0,1).$

Indeed, 
\begin{equation*} 
\begin{split}
\sum_{v_0 \in \mathscr{V}} \|v-v_0\|^2 
 & = \sum_{n_i\in \{0,1\}} \left \langle \sum_{i=1}^4 (t_i-n_i)v_i,  \sum_{i=1}^4 (t_i-n_i) v_i \right \rangle \\
\end{split}
\end{equation*}

For the moment, we assume that $(t_1,t_2,t_3,t_4)$ can take any value in $\mathbb R^4$ and this problem becomes a standard optimization problem without constraints.

Now taking the partial derivative with respect to $t_i$, for $i=1,2,3,4$, we obtain the following system of linear equations:

\begin{equation*} 
\begin{split}
 &~~~~\frac{\partial}{\partial t_i}\sum_{v_0 \in \mathscr{V}} \|v-v_0\|^2 \\
 & = \frac{\partial}{\partial t_i} \sum_{n_i\in \{0,1\}} \left \langle \sum_{i=1}^4 (t_i-n_i)v_i,  \sum_{i=1}^4 (t_i-n_i) v_i \right \rangle \\
 & = \sum_{n_i\in \{0,1\}} \left \langle v_i \sum_{i=1}^4 (t_i-n_i) v_i \right \rangle.
\end{split}
\end{equation*}

\vspace{0.6cm}
We may write the solution to the critical points of this four variable function in its matrix form as:

\begin{equation*}
    \begin{bmatrix}
\left \langle v_1, v_1 \right \rangle & \left \langle v_1, v_2 \right \rangle & \left \langle v_1, v_3 \right \rangle & \left \langle v_1, v_4 \right \rangle \\
\left \langle v_2, v_1 \right \rangle & \left \langle v_2, v_2 \right \rangle & \left \langle v_2, v_3 \right \rangle & \left \langle v_2, v_4 \right \rangle \\
\left \langle v_3, v_1 \right \rangle & \left \langle v_3, v_2 \right \rangle & \left \langle v_3, v_2 \right \rangle & \left \langle v_3, v_4 \right \rangle \\
\left \langle v_4, v_1 \right \rangle & \left \langle v_4, v_2 \right \rangle & \left \langle v_4, v_3 \right \rangle & \left \langle v_4, v_4 \right \rangle 
\end{bmatrix}
 \begin{bmatrix}
2t_1-1 \\
2t_2-1 \\
2t_3-1 \\
2t_4-1 
\end{bmatrix}=
\begin{bmatrix}
0 \\
0 \\
0 \\
0 
\end{bmatrix}.
\end{equation*}

\vspace{1cm}
Since $\{v_1,v_2,v_3,v_4\}$ are linearly independent, the coefficient matrix, as a Gram matrix, is nondegenerate and the unique solution to this equation is 
$$t_1=t_2=t_3=t_4=\frac{1}{2}.$$

The second derivative test gives immediately that this is a local, and thus global minimum for the function, and its miminum value is
\begin{equation*} 
\begin{split}
\min_{v\in \R^4}\sum_{v_0 \in \mathscr{V}} \|v-v_0\|^2 
 & = \sum_{n_i\in \{0,1\}} \left \langle \sum_{i=1}^4 (t_i-n_i)v_i,  \sum_{i=1}^4 (t_i-n_i) v_i \right \rangle \\
 & = \sum_{n_i\in \{0,1\}} \left \langle \sum_{i=1}^4 (\frac{1}{2}-n_i)v_i,  \sum_{i=1}^4 (\frac{1}{2}-n_i) v_i \right \rangle \\
 & = \sum_{n_i\in \{0,1\}} \left \langle \sum_{i=1}^4 (\frac{1}{2}-n_i)v_i,  \sum_{i=1}^4 (\frac{1}{2}-n_i) v_i \right \rangle \\
  & = 4 \sum_{i=1}^4 \|v_i\|^2,  \\
\end{split}
\end{equation*}

where the last equality follows from the cancellations in the cross terms $\left \langle v_i,v_j \right \rangle$ whenever $i \ne j$. It follows that (noticing that $|\mathscr{V}|=16$)
\begin{align*}\label{eq:quater}
     \min_{v \in  \La-\Lambda_0}\min_{v_0 \in \Lambda_0}\|v-v_0\|^2 
     & \le \min_{v \in \La-\Lambda_0}\frac{1}{16}\sum_{v_0 \in \mathscr{V}} \|v-v_0\|^2 \\
     & \le \frac{1}{4} \sum_{i=1}^4 \|v_i\|^2 \tag{by the Claim 8} \\
     & \le \max\{\|v_j\|:j=1,2,3,4\}
\end{align*}

Now combining this with Claim 7 above yields 
\begin{equation}
    \max\{\|v_j\|:j=1,2,3,4\}\le \frac{1}{4} \sum_{i=1}^4 \|v_i\|^2 \le \max\{\|v_j\|:j=1,2,3,4\},
\end{equation}

and thus 
$$\|v_1\|=\|v_2\|=\|v_3\|=\|v_4\|.$$

\begin{claim}
$\left \langle v_i,v_j \right \rangle = 0$ for all $i\ne j$.
\end{claim}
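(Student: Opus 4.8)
The plan is to exploit the assumption $\Lambda_0\subsetneq\Lambda$ together with the equal-norm conclusion just obtained: I will first show that the vector $v\in\La-\Lambda_0$ must be the centre $\tfrac12(v_1+v_2+v_3+v_4)$ of the parallelepiped $\mathscr{P}$ and that it is then equidistant from all $16$ vertices in $\mathscr{V}=\{\sum_i n_iv_i:n_i\in\{0,1\}\}$, and then read off the vanishing of the cross terms purely from those $16$ distance equalities. Throughout I keep the standing data: $v=\sum_{i=1}^{4}t_iv_i$ with $t_i\in[0,1)$, and $\|v_1\|=\|v_2\|=\|v_3\|=\|v_4\|=:\lambda$.

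For the first step I would return to the weighted-distance identity established inside the proof of Claim 8: the weights $w(v_0)$, $v_0\in\mathscr{V}$, are nonnegative and sum to $1$, and, once all $\|v_i\|^2$ equal $\lambda^2$, they satisfy $\sum_{v_0\in\mathscr{V}}w(v_0)\|v-v_0\|^2=\bigl(\sum_{i=1}^{4}t_i(1-t_i)\bigr)\lambda^2$. On the other hand, Claim 7 applied to $v\in\La-\Lambda_0$ and to the vertices $v_0\in\mathscr{V}\subset\Lambda_0$ gives $\|v-v_0\|^2\ge\lambda^2$ for every $v_0$. Combining these, $\lambda^2\le\sum_{v_0}w(v_0)\|v-v_0\|^2=\bigl(\sum_i t_i(1-t_i)\bigr)\lambda^2\le\lambda^2$ since $t_i(1-t_i)\le\tfrac14$; hence $\sum_i t_i(1-t_i)=1$, which forces $t_i=\tfrac12$ for every $i$. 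All weights then equal $(\tfrac12)^4>0$, so a weighted average of the numbers $\|v-v_0\|^2$, each $\ge\lambda^2$, equals $\lambda^2$, which is possible only if $\|v-v_0\|^2=\lambda^2$ for \emph{every} $v_0\in\mathscr{V}$.

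The remaining work is essentially bookkeeping. Writing $v=\tfrac12(v_1+v_2+v_3+v_4)$ and a vertex as $v_0=\sum_i n_iv_i$ with $n_i\in\{0,1\}$, and setting $\epsilon_i:=1-2n_i\in\{\pm1\}$, we have $v-v_0=\tfrac12\sum_i\epsilon_iv_i$; expanding $\|v-v_0\|^2=\lambda^2$ and using $\sum_i\|v_i\|^2=4\lambda^2$ yields $\sum_{1\le i<j\le4}\epsilon_i\epsilon_j\langle v_i,v_j\rangle=0$, and as $v_0$ ranges over $\mathscr{V}$ this holds for all $(\epsilon_1,\epsilon_2,\epsilon_3,\epsilon_4)\in\{\pm1\}^4$. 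To finish I would solve this linear system (one equation per sign vector) in the six unknowns $\langle v_i,v_j\rangle$: either by noting that the functions $\epsilon\mapsto\epsilon_i\epsilon_j$ are pairwise orthogonal characters of $\{\pm1\}^4$, so that averaging the identity against $\epsilon_p\epsilon_q$ isolates $\langle v_p,v_q\rangle$, or, equivalently, by subtracting the identities for two sign vectors differing in a single coordinate. Either route gives $\langle v_i,v_j\rangle=0$ for all $i\ne j$.

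The one genuinely delicate point is the squeeze in the second paragraph: it must use the \emph{weighted} identity from the proof of Claim 8 rather than the plain average over the $16$ vertices (which alone does not pin down $v$), and one must observe that tightness of the chain of inequalities simultaneously yields $t_i=\tfrac12$ and the equality of all vertex distances; everything after that is routine. As a byproduct the same squeeze shows that $\tfrac12(v_1+v_2+v_3+v_4)$ is the only element of $(\La-\Lambda_0)\cap\mathscr{P}$, so $[\La:\Lambda_0]=2$; combined with the orthogonality just proved, this makes $\{v_1,v_2,v_3,\tfrac12(v_1+\cdots+v_4)\}$ a basis of $\Lambda$ attaining the successive minima, which is how I expect the $d=4$ case to conclude.
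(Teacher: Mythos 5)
Your proof is correct, and it arrives at the same system of equations as the paper but by a noticeably different route. The paper first pins down $v=\tfrac12(v_1+v_2+v_3+v_4)$ via the unconstrained minimization of $\sum_{v_0\in\mathscr{V}}\|v-v_0\|^2$ (Gram-matrix critical point at $t_i=\tfrac12$), then observes that the whole coset $\tfrac12(\pm v_1\pm v_2\pm v_3\pm v_4)$ lies in $\Lambda$, so $\lambda_1$-minimality yields the inequalities $\sum_{i<j}\pm\langle v_i,v_j\rangle\ge 0$, which are upgraded to equalities by a symmetry/averaging step and then eliminated by a rank argument on an explicit $6\times 6$ sign matrix. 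You instead extract both $t_i=\tfrac12$ and the exact equidistance $\|v-v_0\|=\lambda$ to all sixteen vertices from a single squeeze combining the weighted identity inside Claim 8 with Claim 7; this hands you the relations $\sum_{i<j}\epsilon_i\epsilon_j\langle v_i,v_j\rangle=0$ for every $\epsilon\in\{\pm1\}^4$ directly as equalities, and you solve them by orthogonality of the characters $\epsilon\mapsto\epsilon_i\epsilon_j$ (or by differencing adjacent sign vectors). Your finish is in fact slightly more robust than the paper's: the displayed coefficient matrix in the paper contains rows, such as a pattern with exactly one $-1$ among the six products, that cannot occur as $(\epsilon_i\epsilon_j)_{i<j}$ for any $\epsilon$, whereas averaging against $\epsilon_p\epsilon_q$ uses only the genuinely available equations and isolates each $\langle v_p,v_q\rangle$ transparently; likewise your squeeze makes the statement ``$v$ is the centre and is equidistant from the vertices'' self-contained, bypassing the second-derivative test. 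The one point you rightly flag as delicate --- that the weights are nonnegative, sum to one, and are strictly positive once $t_i=\tfrac12$ --- is exactly what licenses the tightness argument, so the proof stands as written.
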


\begin{proof}[Proof of Claim]
\renewcommand\qedsymbol{\#}

Let us summarize what we have obtained so far:

We proved that if $v_1,v_2.v_3,v_4$ are linearly independent and $\|v_j\|=\lambda_j, j=1,2,3,4$, then any vector $v \in \mathscr{P}\cap \La -\Lambda_0$ must be of the form:
$$\frac{1}{2}(v_1+v_2+v_3+v_4),$$

Since $\mathscr{P}$ is a fundamental domain of $\Lambda$, it follows that 
$$\Lambda=\frac{1}{2}(v_1+v_2+v_3+v_4)+\Lambda_0.$$

On the other hand, from the inequality
$$\|v_i\|^2 = \lambda_i(\Lambda)^2 \le \left\|\frac{1}{2}(\pm v_1\pm v_2\pm v_3 \pm v_4) \right\|^2,$$
we have
$$\sum_{1\le i< j \le 4}\pm \left \langle v_i,v_j \right\rangle \ge 0.$$

By symmetry, 
\begin{equation}
    \sum_{1\le i< j \le 4}\pm \left \langle v_i,v_j \right\rangle = 0.
\end{equation}

\vspace{1cm}
If we view this equation as a linear system with ${4 \choose 2}=6$ variables $\left \langle v_i,v_j \right\rangle$ and the coefficient matrix 

$$\begin{bmatrix}
1 & 1 & 1 & 1 & 1 & 1 \\
1 & 1 & 1 & 1 & 1 & -1 \\
1 & 1 & 1 & 1 & -1 & -1 \\
1 & 1 & 1 & -1 & -1 & -1 \\
1 & 1 & -1 & -1 & -1 & -1 \\
1 & -1 & -1 & -1 & -1 & -1 
\end{bmatrix} $$

\vspace{0.5cm}
is clearly of rank $6$, which forces 
$$\left \langle v_i,v_j \right\rangle = 0,$$ for all $i\ne j$.
\end{proof}

Hence either $$\Lambda=\Lambda_0=\text{Span}_{\mathbb Z}\{v_1,v_2,v_3,v_4\}$$
or $v_i$'s are of equal length and mutually orthogonal
$$\Lambda=\text{Span}_{\mathbb Z}\{v_1,v_2,v_3,\frac{1}{2}(v_1+v_2+v_3+v_4)\}
\supsetneq \Lambda_0$$

In either case, it is possible to find a basis of $\Lambda$ corresponding to the four successive minima of the lattice, as desired. This completes the proof of the case $d=4$.

\end{proof}

The following example shows that the theorem above fails for $d\ge 5$.

\begin{example}
Let $d \ge 5$ and consider the lattice $\Lambda$ spanned by $$e_1,e_2, \dots, e_{d-1},\frac{1}{2}(e_1+\cdots+ e_d),$$
where $e_i$ is the canonical basis vector of $\mathbb R^d$ whose $i$-th component is $1$ while all the other components are zero.

It is easy to see that $\Lambda$ contains $\mathbb Z^d$ since $$e_d=2\cdot \frac{1}{2}(e_1+\cdots+ e_d)-e_1-\cdots-e_{d-1} \in \Lambda.$$ 
Observe that $\lambda_i(\Lambda)=1, \forall i=1,2,\dots, d$ since the closed unit ball at the origin contains exactly $d$ linearly independent vectors $e_1,\cdots,e_d$ with equal length $1$.

On the other hand, we cannot find a basis $v_1,\cdots, v_d$ of $\Lambda$ satisfying 
$$\|v_i\|=1,~\forall i=1,2,\cdots,d.$$
This is because each vector in $\Lambda$ is either of the form $e_i$ or of the form 
$$\frac{1}{2}(\pm n_i e_i \pm n_j e_j \pm n_k e_k \pm n_p e_p \pm n_q e_q), $$
where $n_i,n_j,n_k,n_p,n_q$ are all nonzero integers. 

Since $\Lambda \supsetneq \mathbb Z^d$, the basis vectors of $\Lambda$ cannot only be in the former case. But for the latter case, the sum of squares of coefficients is at least $\frac{5}{4}$, contradicting to $\|v_i\|=1,~\forall i=1,2,\cdots,d.$ Therefore, for $d\ge 5$, it is not true that the successive minima of a lattice can be realized by a basis of the lattice.\qed

\end{example}

However, with a compromise, we can still choose a basis whose lengths are equivalent to the successive minima of the lattice. To this end, we need the following lemma:

\begin{lemma}\label{lma4}
Let $\Lambda$ be a lattice in $\mathbb R^d$ and $v_1 \in \Lambda$ be a vector with $\|v_1\|=\lambda_1(\Lambda)$. In other words, this is a nonzero vector in $\Lambda$ of shortest length. Let $\pi_1$ be the projection of $\mathbb R^d$ onto $v_1^{\perp}$, the hyperplane in $\mathbb R^d$ orthogonal to $v_1$, then we have to following statements:
\begin{enumerate}
    
    \item $\|\pi_1(v)\| \ge \frac{\sqrt{3}}{2}\|v_1\|, \forall v \in \Lambda;$
    
    \item $\pi_1(\Lambda)$ is a lattice \footnote{The projections of lattices are not always lattices of the corresponding subspaces. For example, the projection of the standard lattice $\Z^2$ onto the irrational line $y=\sqrt{2}x$ is no longer a lattice with respect to that line, which can be deduced from Dirichlet's simultaneous Diophantine approximation theorem.} in $v_1^{\perp}$ with covolume $\frac{\text{covol}(\lambda)}{\|v_1\|}$;
    
    
\end{enumerate}

\end{lemma}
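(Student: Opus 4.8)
The plan is to prove (1) by a short packing/triangle-inequality argument, and (2) by the standard fact that a projection of a lattice along one of its own primitive vectors is again a lattice, together with a covolume computation via the orthogonal decomposition $\R^d = \R v_1 \oplus v_1^\perp$.

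For part (1), fix $v \in \Lambda$ and write $v = c\,v_1 + \pi_1(v)$ with $c = \langle v, v_1\rangle/\|v_1\|^2$. By subtracting an integer multiple of $v_1$ from $v$ we may assume $c \in [-\tfrac12, \tfrac12]$; this does not change $\pi_1(v)$. Then $v - c v_1$ is still in... no — better: consider instead the vector $w = v - n v_1$ where $n$ is the nearest integer to $c$, so $w \in \Lambda$ and $w = (c-n) v_1 + \pi_1(v)$ with $|c-n|\le \tfrac12$. If $\pi_1(v) = 0$ then $v$ is a rational multiple of $v_1$, and primitivity of $v_1$ (it realizes $\lambda_1$, hence is primitive) forces $v \in \Z v_1$, so $\pi_1(v)=0$ is consistent; assume now $\pi_1(v)\neq 0$, so $w \neq 0$ and $\|w\| \ge \lambda_1(\Lambda) = \|v_1\|$. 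Since $v_1 \perp \pi_1(v)$,
\begin{equation*}
  \|v_1\|^2 \le \|w\|^2 = (c-n)^2 \|v_1\|^2 + \|\pi_1(v)\|^2 \le \tfrac14 \|v_1\|^2 + \|\pi_1(v)\|^2,
\end{equation*}
whence $\|\pi_1(v)\|^2 \ge \tfrac34 \|v_1\|^2$, i.e. $\|\pi_1(v)\| \ge \tfrac{\sqrt 3}{2}\|v_1\|$. The one subtlety to state carefully is why we may take $v_1$ primitive: if $v_1 = m u$ for $u \in \Lambda$ and $m \ge 2$, then $\|u\| < \|v_1\| = \lambda_1(\Lambda)$, a contradiction.

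For part (2), extend $v_1$ to a basis $v_1, u_2, \dots, u_d$ of $\Lambda$ — possible precisely because $v_1$ is primitive. Then $\Lambda = \Z v_1 \oplus \bigoplus_{i=2}^d \Z u_i$, and applying the linear map $\pi_1$ (which kills $v_1$) gives $\pi_1(\Lambda) = \bigoplus_{i=2}^d \Z\,\pi_1(u_i)$; the vectors $\pi_1(u_2), \dots, \pi_1(u_d)$ are linearly independent in $v_1^\perp$ since $v_1, u_2, \dots, u_d$ are independent in $\R^d$ and $\pi_1$ is injective on $v_1^\perp \supseteq \Span\{\pi_1(u_i)\}$... more directly: a dependence $\sum a_i \pi_1(u_i) = 0$ means $\sum a_i u_i \in \R v_1 \cap \Span\{u_i\} = \{0\}$. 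Hence $\pi_1(\Lambda)$ is a full-rank lattice in the $(d-1)$-dimensional space $v_1^\perp$. For the covolume, use that in the orthogonal decomposition $\R^d = \R v_1 \oplus v_1^\perp$ the parallelepiped spanned by $v_1, u_2, \dots, u_d$ has volume equal to $\|v_1\|$ times the $(d-1)$-volume of the parallelepiped spanned by the projections $\pi_1(u_2), \dots, \pi_1(u_d)$ (slice the solid by hyperplanes parallel to $v_1^\perp$, or compute $\det$ in an orthonormal basis adapted to the splitting). Therefore $\operatorname{covol}(\Lambda) = \|v_1\| \cdot \operatorname{covol}(\pi_1(\Lambda))$, giving $\operatorname{covol}(\pi_1(\Lambda)) = \operatorname{covol}(\Lambda)/\|v_1\|$.

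The only real obstacle is the bookkeeping in part (2): one must justify that $\pi_1$ maps the basis $(v_1, u_2,\dots,u_d)$ to a generating set of the form $(0, \pi_1(u_2), \dots, \pi_1(u_d))$ and that these $d-1$ projected vectors are independent and generate $\pi_1(\Lambda)$ over $\Z$ — this is where primitivity of $v_1$ (needed to get the basis in the first place) is essential, and it is what makes the footnote's cautionary example not apply here. The covolume identity is then just the multiplicativity of volume under an orthogonal direct sum, which can be cited or checked by a one-line determinant computation; no estimate is needed there.
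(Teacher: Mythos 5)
Your proposal is correct. For part (1) you argue exactly as the paper does: translate $v$ by an integer multiple of $v_1$ so that the $v_1$-component lies in $[-\tfrac12,\tfrac12]$, then apply Pythagoras and minimality of $\|v_1\|$; your explicit restriction to $\pi_1(v)\neq 0$ is the right reading of the statement (the paper's ``$\forall v\in\Lambda$'' tacitly excludes $v\in\Z v_1$, and its own contradiction argument needs that exclusion too, since the translated vector could be $0$). For part (2), however, you take a genuinely different route. The paper never extends $v_1$ to a basis: it gets discreteness of $\pi_1(\Lambda)$ from part (1), observes full rank, and then proves the claim that $F_1+[0,1)v_1$ is a fundamental domain of $\Lambda$ whenever $F_1$ is one for $\pi_1(\Lambda)$, deducing $\mathrm{covol}(\Lambda)=\|v_1\|\cdot\mathrm{covol}(\pi_1(\Lambda))$ by comparing Lebesgue measures of these fundamental domains. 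You instead note that a shortest vector is primitive, extend $v_1$ to a $\Z$-basis $v_1,u_2,\dots,u_d$ of $\Lambda$, check that $\pi_1(u_2),\dots,\pi_1(u_d)$ form a $\Z$-basis of $\pi_1(\Lambda)$, and read off the covolume from the block-triangular determinant in a basis adapted to $\R v_1\oplus v_1^{\perp}$. Your argument is shorter and more algebraic, and it makes part (1) unnecessary for part (2) (latticehood comes for free from the projected basis); its only external input is the fact that a primitive vector extends to a lattice basis, which the paper itself invokes elsewhere (in the footnote on primitive tuples), so this is legitimate. The paper's fundamental-domain argument buys self-containedness at this point and a description of a fundamental domain of $\Lambda$ fibered over one of $\pi_1(\Lambda)$, which is in the same spirit as the lifting construction reused in the proof of Theorem \ref{thmA5}. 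Both covolume computations agree, and your independence check ($\sum_i a_i\pi_1(u_i)=0$ forces $\sum_i a_iu_i\in\R v_1\cap\mathrm{span}_{\R}\{u_2,\dots,u_d\}=\{0\}$) is sound.
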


\vspace{0.5cm}
\begin{proof}
For (1) and (2), we suppose on the contrary that there is a vector $v\in \Lambda$ such that 
$$\|\pi_1(v)\| < \frac{\sqrt{3}}{2} \|v_1\|.$$
The orthogonal decomposition of $\R^d$ gives 
$$v=\pi_1(v)+tv_1,$$
for some $t\in \mathbb R$.

Since $\pi_1(v+nv_1)=\pi_1(v)$, for all $n\in \Z$, by replacing $v$ with $v+nv_1$ for some $n$, we may assume $v=\pi_1(v)+tv_1$ with $t\in [-\frac{1}{2},\frac{1}{2})$.

For (1), since $v_1$ is perpendicular to $\pi_1(v)$, by the Pythagorean's theorem 
\begin{align*}
    \|v\|^2 &= \|\pi(v)\|^2+t^2\|v_1\|^2 \\
          &< \frac{3}{4}\|v_1\|^2+\frac{1}{4}\|v_1\|^2 \\
          &=\|v_1\|^2,
\end{align*}
which contradicts to the choice of $\|v_1\|$ as a minimal-length nonzero vector of $\Lambda$. This proves (1).

To see (2), we first observe that from (1), all vectors in \pl ~are bounded $\frac{\sqrt 3}{2} \|v_1\|$ away from zero (This gives the discreteness). On the other hand clearly $\pi_1(\Lambda)$ contains $d-1$ linearly independent vectors. So by definition, \pl~ is a lattice in $v_1^{\perp}$ and it makes sense from now to talk about its fundamental domain, covolume and success minima. 

We shall first study the relation between the fundamental domain of $\Lambda$ and \pl. Let $F_1$ be a fundamental domain of \pl.

\begin{claim}
$F:=F_1+[0,1)v_1$ is a fundamental domain of $\Lambda$
\end{claim}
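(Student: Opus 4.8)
The plan is to verify the two defining properties of a fundamental domain directly: that the $\Lambda$-translates of $F$ cover $\R^d$ and that they are pairwise disjoint. Two ingredients drive everything. First, $v_1$ is \emph{primitive} in $\Lambda$, equivalently $\Lambda \cap \R v_1 = \Z v_1$: since $\|v_1\| = \lambda_1(\Lambda)$ is the length of a shortest nonzero lattice vector, the discrete subgroup $\Lambda \cap \R v_1$ of the line $\R v_1$ has the form $\Z(v_1/m)$ for some positive integer $m$, and $m \ge 2$ would produce the nonzero vector $v_1/m \in \Lambda$ of length $\|v_1\|/m < \lambda_1(\Lambda)$, a contradiction; hence $m = 1$. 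Since $\ker \pi_1 = \R v_1$, this says that the fibre of $\pi_1|_\Lambda$ over any point of $\pi_1(\Lambda)$ is a single coset of $\Z v_1$. Second, by part (2) of Lemma \ref{lma4}, $\pi_1(\Lambda)$ really is a lattice in $v_1^{\perp}$, so the phrase ``$F_1$ is a fundamental domain of $\pi_1(\Lambda)$'' makes sense, and I will use it in the strong form that every $\pi_1(\Lambda)$-orbit in $v_1^{\perp}$ meets $F_1$ in exactly one point.

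For the covering property I would take an arbitrary $x \in \R^d$ and use the orthogonal splitting $x = \pi_1(x) + t v_1$ with $t \in \R$. Choosing the unique $w' \in \pi_1(\Lambda)$ with $\pi_1(x) - w' \in F_1$, lifting $w'$ to some $w_0 \in \Lambda$ and writing $w_0 = w' + s v_1$, and then choosing the unique $n \in \Z$ with $t - s - n \in [0,1)$, the vector $w := w_0 + n v_1 \in \Lambda$ satisfies
$$x - w = \bigl(\pi_1(x) - w'\bigr) + (t - s - n) v_1 \in F_1 + [0,1) v_1 = F,$$
so $x \in F + w$. Thus $\R^d = \bigcup_{w \in \Lambda}(F + w)$.

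For disjointness, suppose $p \in F$ and $p + w \in F$ for some $w \in \Lambda$; write $p = y + t v_1$ and $p + w = y' + t' v_1$ with $y, y' \in F_1$ and $t, t' \in [0,1)$ (this representation is unique because the sum $v_1^{\perp} \oplus \R v_1$ is direct). Applying $\pi_1$ gives $y' - y = \pi_1(w) \in \pi_1(\Lambda)$, and since $y, y' \in F_1$ lie in the same $\pi_1(\Lambda)$-orbit, the fundamental-domain property of $F_1$ forces $y = y'$. Then $\pi_1(w) = 0$, so $w \in \Lambda \cap \R v_1 = \Z v_1$ by the primitivity step; writing $w = n v_1$ we obtain $t' - t = n \in \Z$, and $t, t' \in [0,1)$ forces $n = 0$, hence $w = 0$. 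Therefore the translates $F + w$, $w \in \Lambda$, are pairwise disjoint, and $F$ is a fundamental domain of $\Lambda$.

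I expect the only genuinely substantive point to be the primitivity of $v_1$ (equivalently $\Lambda \cap \R v_1 = \Z v_1$); once that is in place, the argument is bookkeeping with the decomposition $\R^d = v_1^{\perp} \oplus \R v_1$ together with the fact, already part of the hypothesis via Lemma \ref{lma4}(2), that $F_1$ tiles $v_1^{\perp}$ under $\pi_1(\Lambda)$. As an immediate byproduct this claim also yields the covolume identity $\text{covol}(\Lambda) = \|v_1\| \cdot \text{covol}(\pi_1(\Lambda))$ asserted in part (2), since $F = F_1 + [0,1)v_1$ has Euclidean volume $\|v_1\|$ times that of $F_1$.
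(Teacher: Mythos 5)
Your proof is correct and follows essentially the same route as the paper: decompose along $v_1^{\perp}\oplus\R v_1$, use the fundamental-domain property of $F_1$ under $\pi_1(\Lambda)$ for both covering and uniqueness, and kill the remaining $v_1$-component by an integer translate. The only difference is cosmetic — you make explicit the primitivity step $\Lambda\cap\R v_1=\Z v_1$ (via the shortest-vector property), which the paper uses implicitly when it asserts $v'-v''\in\Z v_1$.
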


\begin{proof}[Proof of Claim]
\renewcommand\qedsymbol{\#}

For any $x\in \R^d$,$\pi_1\in v_1^{\perp}$. By the definition of fundamental domain \pl, there exists a vector $v\in \Lambda$ such that 
$$\pi_1(x-v)\in F.$$
It follows that $x-v-\pi_1(x-v) \in \mathbb Rv_1$. Since $\pi_1(v_1)=0$, there exists $n\in \Z$ and $t\in[0,1)$ such that 
$$x-v-\pi_1(x-v) \in \mathbb Rv_1=nv_1+tv_1.$$
Therefore, $x-v-nv_1=tv_1+\pi_1(x-v) \in [0,1)v_1+F_1$. Namely for any vector $x$ in $\R^d$, we can find a translation of $x$ by a vector in $\Lambda$ that falls into $[0,1)v_1+F_1 =:F$.

\vspace{5mm}
On the other hand, if $x-v'$ and $x-v''$ are both in $F$ with $v',v'' \in \Lambda$, we would like to see $v'=v''$.

Suppose:
\begin{equation*}
    \begin{cases}
      x-v'=t'v_1+y'\\
      x-v''=t''v_1+y''
    \end{cases}\,,
\end{equation*}
where $t',t'' \in [0,1)$ and $y',y''\in F_1$.

Applying $\pi_1$ to both sides, we get
\begin{equation*}
    \begin{cases}
      y'=\pi_1(x)-\pi_1(v')\\
      y''=\pi_1(x)-\pi_1(v'')
    \end{cases}\,.
\end{equation*}
Since $F_1$ is a fundamental domain for $\pi_1(\R^d) / \pi_1(\Lambda)$, the translation is unique and $\pi_1(v')=\pi_1(v'')$. So $v'-v'' \in \mathbb Z v_1$.

But $v'-v''=(x-v'')-(x-v') \in [0,1)v_1+F_1 - \big ([0,1)v_1+F_1 \big) = (-1,1)v_1+(F_1-F_1)$, so it forces $v'=v''.$
\end{proof}

Now since $v_1$ is orthogonal to all vectors in $F_1$, it follows that 
\begin{align*}
    \infty>\text{covol}(\Lambda) 
    &=m(F)\\
    &=m([0,1)v_1 + F_1)\\
    &=\|v_1\| \cdot m(F_1)\\
    &=\|v_1\| \cdot \text{covol}(\pi_1(\Lambda))
\end{align*}
This proves (2).

\end{proof}

\begin{theorem}\label{thmA5}
Let $\Lambda$ be a lattice in $\mathbb R^d$. Then there exist a basis $v_1,v_2,\dots,v_d$ of $\Lambda$ such that 
$$\|v_1\|=\lambda_1(\Lambda),\|v_2\|_d \asymp_d \lambda_2(\Lambda),\dots,\|v_d\| \asymp_d \lambda_d(\Lambda).$$

Here $A \asymp_d B$ means there exist positive constants $c_d,C_d$ depending only on $d$ such that 
$$c_d|A|\le |B| \le C_d|A|.$$


\end{theorem}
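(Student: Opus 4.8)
The plan is to argue by induction on $d$, peeling off a shortest vector and projecting, exactly in the spirit of Lemma \ref{lma4}. For $d = 1$ there is nothing to do: a shortest nonzero vector $v_1$ is a basis and $\|v_1\| = \lambda_1(\Lambda)$. For the inductive step, fix $v_1 \in \Lambda$ with $\|v_1\| = \lambda_1(\Lambda)$; note that $v_1$ is primitive, since otherwise a proper lattice divisor would be a strictly shorter nonzero vector. Let $\pi_1 \colon \R^d \to v_1^\perp$ be the orthogonal projection. By Lemma \ref{lma4}, $\pi_1(\Lambda)$ is a lattice of rank $d-1$ in $v_1^\perp$, so (after identifying $v_1^\perp$ isometrically with $\R^{d-1}$) the inductive hypothesis gives a basis $\bar w_2, \dots, \bar w_d$ of $\pi_1(\Lambda)$ with $\|\bar w_i\| \asymp_{d-1} \lambda_{i-1}(\pi_1(\Lambda))$ for $i = 2, \dots, d$. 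For each such $i$ pick a lift $w_i \in \Lambda$ with $\pi_1(w_i) = \bar w_i$, and using $\pi_1(w_i + n v_1) = \pi_1(w_i)$ normalize the $v_1$-component of $w_i$ to lie in $[-\tfrac12, \tfrac12)$; then by the Pythagorean theorem $\|w_i\|^2 \le \|\bar w_i\|^2 + \tfrac14 \|v_1\|^2$.

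The next step is to check that $\{v_1, w_2, \dots, w_d\}$ is a $\Z$-basis of $\Lambda$. Any $w \in \Lambda$ with $\pi_1(w) = 0$ is a real multiple of $v_1$, hence --- $v_1$ being primitive --- an integer multiple, so $\ker(\pi_1|_\Lambda) = \Z v_1$. Given $x \in \Lambda$, write $\pi_1(x) = \sum_{i=2}^d c_i \bar w_i$ with $c_i \in \Z$; then $x - \sum_{i=2}^d c_i w_i$ lies in $\ker(\pi_1|_\Lambda) = \Z v_1$, so $x$ is an integer combination of $v_1, w_2, \dots, w_d$, and uniqueness of the coefficients is immediate. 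The remaining, and main, task is to compare the successive minima of $\Lambda$ and of $\pi_1(\Lambda)$: the claim is that $\lambda_{i-1}(\pi_1(\Lambda)) \asymp \lambda_i(\Lambda)$ for $2 \le i \le d$, with absolute constants. For ``$\le$'': choosing $i$ linearly independent vectors of $\Lambda$ of norm at most $\lambda_i(\Lambda)$, their $\pi_1$-images span a subspace of $v_1^\perp$ of dimension at least $i-1$ (the kernel is one-dimensional), so $i-1$ of them are independent and have norm at most $\lambda_i(\Lambda)$. For the reverse: take $i-1$ linearly independent vectors of $\pi_1(\Lambda)$ of norm at most $\lambda_{i-1}(\pi_1(\Lambda))$, lift them as above with $v_1$-component in $[-\tfrac12, \tfrac12)$; together with $v_1$ these are $i$ linearly independent vectors of $\Lambda$ of norm at most $\big(\lambda_{i-1}(\pi_1(\Lambda))^2 + \tfrac14\|v_1\|^2\big)^{1/2}$, and since $\|v_1\| = \lambda_1(\Lambda) \le \tfrac{2}{\sqrt3}\lambda_1(\pi_1(\Lambda)) \le \tfrac{2}{\sqrt3}\lambda_{i-1}(\pi_1(\Lambda))$ by Lemma \ref{lma4}(1), this bound is at most a constant multiple of $\lambda_{i-1}(\pi_1(\Lambda))$, whence $\lambda_i(\Lambda) \le C\,\lambda_{i-1}(\pi_1(\Lambda))$.

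Finally I would combine the estimates. From $\|w_i\|^2 \le \|\bar w_i\|^2 + \tfrac14\|v_1\|^2$, the inductive bound $\|\bar w_i\| \le C_{d-1}\,\lambda_{i-1}(\pi_1(\Lambda))$, the ``$\le$'' half of the comparison, and $\|v_1\| = \lambda_1(\Lambda) \le \lambda_i(\Lambda)$, one gets $\|w_i\| \le C_d\,\lambda_i(\Lambda)$; from $\|w_i\| \ge \|\bar w_i\| \ge c_{d-1}\,\lambda_{i-1}(\pi_1(\Lambda))$ and the reverse half one gets $\|w_i\| \ge c_d\,\lambda_i(\Lambda)$. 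Setting $v_i := w_i$ for $i = 2, \dots, d$ then completes the induction. I expect the only real obstacle to be the bookkeeping in the second paragraph --- verifying that the lifted system is genuinely a basis and that the two-sided comparison between $\lambda_{i-1}(\pi_1(\Lambda))$ and $\lambda_i(\Lambda)$ holds with constants that do not blow up through the induction; everything else is a direct application of Lemma \ref{lma4}.
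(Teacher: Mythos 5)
Your proposal is correct and follows essentially the same route as the paper: induction on $d$, peeling off a shortest vector $v_1$, applying Lemma \ref{lma4} to the projected lattice $\pi_1(\Lambda)$, lifting the inductively obtained basis with $v_1$-component normalized to $[-\tfrac12,\tfrac12)$, and comparing $\lambda_{i-1}(\pi_1(\Lambda))$ with $\lambda_i(\Lambda)$ in both directions. The only (harmless) variation is that for the bound $\lambda_i(\Lambda)\lesssim\lambda_{i-1}(\pi_1(\Lambda))$ you lift minima-realizing vectors of $\pi_1(\Lambda)$ and adjoin $v_1$, whereas the paper deduces it from the constructed basis vectors themselves; both give constants depending only on $d$.
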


\begin{proof}
We shall prove this by induction on $d$. The case $d=1$ is obvious.

Assume the statement holds for lattices with rank less than or equal to $d-1$. For a rank $d$ lattice $\Lambda$ in $\R^d$, let $v_1$ be any nonzero vector in $\Lambda$ satisfying $\|v_1\|=\lambda_1(\Lambda)$ and let $\pi_1$ be the projection of $\mathbb R^d$ onto $v_1^{\perp}$, the hyperplane in $\mathbb R^d$ orthogonal to $v_1$ as in the previous lemma.

Now applying the induction hypothesis to the $d-1$ dimensional hyperplane $v_1^{\perp}$ and the rank $d-1$ lattice $\pi_1(\Lambda)$ contained in $v_1^{\perp}$ yields a basis $w_2\dots w_d$ of $\pi_1(\Lambda)$ with 
$$\|w_2\|= \lambda_1(\pi_1( \Lambda)), \|w_3\|\asymp_d \lambda_2(\pi_1( \Lambda)),\dots,\|w_d\| \asymp_d \lambda_{d-1}(\pi_1 (\Lambda)).$$

By the monotonicity of $\lambda_j's$, we know 
$$\|w_2\|\lesssim_d \cdots \lesssim_d \|w_d\|.$$

Our next step is to choose some $v_2,\dots,v_d$ in $\Lambda$ as preimages of $w_2,\dots,w_d$ under $\pi_1$ such that $v_1,v_2\dots, v_d$ form a basis of $\Lambda$. We start by choosing $v_2,\cdots,v_d$ to be any $d-1$ vectors in $\mathbb R^d$ with 
$$\pi_1(v_j)=w_j, 2\le j \le d.$$
It follows that $v_1,\cdots,v_d$ are $\R$-linearly independent and thus form an $\R$-linear basis of $\mathbb R^d$. 

For any $v \in \Lambda$, 
\begin{align*}
    \pi_1(v) &=n_2w_2+\cdots+n_dw_d\\
    &=n_2\pi_1(v_2)+\cdots+n_d\pi_1(v_d)\\
    &=\pi_1(n_2v_2+\cdots+n_dv_d).
\end{align*}
So $\pi_1[v-(n_2v_2+\dots+n_dv_d)]=0$ and 
$$v=n_2v_2+\dots+n_dv_d+tv_1,$$
for some $t\in \R$. But since $v \in \Lambda$, $tv_1 \in \Lambda$ and thus $t=0$ or $\pm 1$ since $v_1$ by our choice is a minimal nonzero vector.

Therefore, $\Lambda=\text{Span}_{\Z}\{v_1,\dots v_d\}$. Namely $v_1,\dots,v_d$ indeed form a basis for $\Lambda$.

Observe that replacing each $v_j$ with $v_j+n_j v_1, n_j\in \Z$ does not change the nature that $v_1,\cdots,v_d$ form a basis of $\Lambda$. Since $v_j=w_j+t_jv_1$ for some $t_j\in \mathbb R$, by carefully choosing $n_j$ we may assume $t_j\in [-\frac{1}{2},\frac{1}{2})$.

It follows that for $2 \le j \le d$,
\begin{align*}
    \|w_j\|\le \|v_j\| &\le \|w_j\|+|t_j|\|v_1\| \\
    &\le \|w_j\| +\frac{1}{2} \frac{2}{\sqrt 3} \|w_2\| \\
    &\lesssim_d (1+\frac{1}{\sqrt{3}})\|w_j\|,
\end{align*}

where the second inequality follows from the previous lemma with $\pi_1(v_2)=w_2$. So $\|v_j\|\asymp_d \|w_j\|, j=2\dots, d$. 

Next, we observe that $\lambda_{j-1}(\pi_1(\Lambda)) \le \lambda_{j}(\Lambda)$. This is because if $v_1,v_2',\dots v_{j}'$ represent the first $j$ successive minima vectors in $\Lambda$, then their projection images (excluding $\pi_1(v_1)=0$), $\pi_1(v_1),\pi_1(v_2'),\dots \pi_1(v_{j}')$ are still linearly independent in $v_1^{\perp}$ and 
\begin{equation*}
    \begin{cases}
      \|\pi_1(v_2')\|\le \lambda_{j}(\Lambda)\\
      ~~~~~\vdots \\
      \|\pi_1(v_{j}')\|\le \lambda_{j}(\Lambda)\\
    \end{cases}\,,
\end{equation*}
which implies $\lambda_{j-1}(\pi_1(\Lambda))\le \lambda_{j}(\Lambda)$. Therefore $$\|v_j\|\asymp_d \|w_j\| \asymp_d \lambda_{j-1}(\pi_1(\Lambda)) \le \lambda_{j}(\Lambda)$$ for $j=2,\dots,d$.

On the other hand,
\begin{align*}
    \lambda_j(\Lambda) 
    &\le \max\{\|v_1\|,\dots, \|v_j\| \}\\
    &\lesssim_d \max\{\|w_2\|,\dots, \|w_j\| \} \\
    &\lesssim_d \|w_j\|\\ &=\lambda_{j-1}(\pi_1(\Lambda)).
\end{align*}
Therefore $$\|v_j\|\asymp_d \lambda_j(\Lambda), j=1,2,\dots d.$$
The proof is complete by the induction hypothesis.
\end{proof}

\begin{remark}\label{Minkowski reduced basis}
In practice, the basis in the theorem can be achieve by the Minkowski reduced basis. A basis $\{b_1,\dots, b_d\}$ of a lattice $\Lambda \subset \R^d$ is called \textit{Minkowski reduced} if for each $1\le  i \le d$, $b_i$ is the shortest nonzero vector in the lattice such that $i$ linearly independent vectors $\{b_1,...b_i\}$ can be extended to a basis of the lattice. See  \cite{HELFRICH1985125} for an algorithm to produce a Minkowski reduced basis. Interestingly, it is still not know whether the construction of shortest vectors in a lattice with respect to the $l^2$ norm is NP-hard or not (But the answer is affirmative for the $l^{\infty}$-norm \cite{1981Another}. Moreover, the $l^2$ case is proved to be NP-hard for randomized algorithms in \cite{Aj98}). 
\end{remark}

\vspace{5mm}
As another corollary to our Lemma \ref{lma4}, we can prove the classical Minkowski's Second Convex Body Theorem:

\begin{theorem}[Minkowski's Second Convex Body Theorem, 1896 \cite{MI1896}]\label{Min2}
Let $\Lambda\subset \R^d$ be a lattice and let $\lambda_k(\Lambda)$ denote the $k$-th successive minima of $\Lambda$. Then 
$$\lambda_1(\Lambda)\cdots \lambda_d(\Lambda)\asymp_d \text{covol}(\Lambda).$$
\end{theorem}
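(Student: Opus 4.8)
The plan is to prove Minkowski's Second Theorem by induction on $d$, using Lemma \ref{lma4} as the engine for the inductive step, exactly in the spirit of the proof of Theorem \ref{thmA5}. The base case $d = 1$ is immediate since $\lambda_1(\Lambda) = \text{covol}(\Lambda)$ for a rank-one lattice. For the inductive step, suppose the result holds in dimension $d-1$. Given a rank-$d$ lattice $\Lambda \subset \R^d$, pick $v_1 \in \Lambda$ with $\|v_1\| = \lambda_1(\Lambda)$ and let $\pi_1$ be the orthogonal projection onto $v_1^\perp$. By Lemma \ref{lma4}(2), $\pi_1(\Lambda)$ is a lattice in the $(d-1)$-dimensional space $v_1^\perp$ with $\text{covol}(\pi_1(\Lambda)) = \text{covol}(\Lambda)/\|v_1\|$, so applying the induction hypothesis gives
$$\text{covol}(\Lambda) = \|v_1\|\cdot\text{covol}(\pi_1(\Lambda)) \asymp_d \lambda_1(\Lambda)\cdot\lambda_1(\pi_1(\Lambda))\cdots\lambda_{d-1}(\pi_1(\Lambda)).$$
Thus the theorem reduces to the two-sided comparison $\prod_{k=1}^{d-1}\lambda_k(\pi_1(\Lambda)) \asymp_d \prod_{k=2}^{d}\lambda_k(\Lambda)$.

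One of the two needed inequalities is already available in the excerpt: in the proof of Theorem \ref{thmA5} it is shown that $\lambda_{k-1}(\pi_1(\Lambda)) \le \lambda_k(\Lambda)$ for $2 \le k \le d$ (project the first $k$ minima vectors of $\Lambda$, discard the image of $v_1$, and use that the remaining images are independent and have norm at most $\lambda_k(\Lambda)$). Multiplying these over $k = 2,\dots,d$ gives $\prod_{k=1}^{d-1}\lambda_k(\pi_1(\Lambda)) \le \prod_{k=2}^{d}\lambda_k(\Lambda)$, which yields the upper bound $\text{covol}(\Lambda) \lesssim_d \lambda_1(\Lambda)\cdots\lambda_d(\Lambda)$. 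For the reverse direction I would invoke Theorem \ref{thmA5}: it produces a basis $v_1,\dots,v_d$ of $\Lambda$ with $\|v_j\| \asymp_d \lambda_j(\Lambda)$ for all $j$. Then $\text{covol}(\Lambda) = |\det(v_1,\dots,v_d)| \le \prod_{j=1}^d \|v_j\| \asymp_d \prod_{j=1}^d \lambda_j(\Lambda)$ by Hadamard's inequality, which is exactly the lower bound $\lambda_1(\Lambda)\cdots\lambda_d(\Lambda) \lesssim_d \text{covol}(\Lambda)$. In fact this last observation makes the induction essentially unnecessary for one direction — Theorem \ref{thmA5} plus Hadamard already gives $\text{covol}(\Lambda) \lesssim_d \prod_j\lambda_j(\Lambda)$ directly — so the cleaner route is: upper bound via Theorem \ref{thmA5} and Hadamard, lower bound via the projection inequality $\lambda_{k-1}(\pi_1(\Lambda)) \le \lambda_k(\Lambda)$ together with the induction on $d$ and Lemma \ref{lma4}(2).

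The one genuinely nontrivial ingredient beyond what is already proved is the easy-looking but important fact $\lambda_1(\Lambda)\cdots\lambda_d(\Lambda) \gtrsim_d \text{covol}(\Lambda)$, which in our approach follows from combining the reduction $\text{covol}(\Lambda) = \|v_1\|\text{covol}(\pi_1(\Lambda))$ with $\text{covol}(\pi_1(\Lambda)) \gtrsim_d \prod_{k=1}^{d-1}\lambda_k(\pi_1(\Lambda))$ (induction) and $\lambda_k(\pi_1(\Lambda)) \le \lambda_{k+1}(\Lambda)$; here the inequality goes the convenient way and no further work is needed. The main obstacle — which in the classical treatment is the hard half of Minkowski's theorem, $\prod_j\lambda_j \le 2^d/\text{vol}(B)\cdot\text{covol}$ with a sharp constant — is entirely sidestepped because the statement here only asks for the comparison up to a dimensional constant $\asymp_d$, so the crude Hadamard bound and the crude projection bounds suffice. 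The only care required is bookkeeping: tracking that all the implied constants depend only on $d$ (they are products of at most $d$ constants each of the form appearing in Theorem \ref{thmA5} and Lemma \ref{lma4}), and noting that for a rank-$d$ lattice sitting in $\R^d$ the notions of $\text{covol}$ used in Lemma \ref{lma4} and in the statement agree. I would also remark that one could instead prove the lower bound directly from Minkowski's First Theorem applied to suitably scaled convex bodies, but the route through Theorem \ref{thmA5} is shorter given what has been established.
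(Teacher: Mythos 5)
Your overall framework (induction on $d$, the projection $\pi_1$ onto $v_1^\perp$, and the covolume identity from Lemma \ref{lma4}(2)) is the same as the paper's, but there is a genuine gap: the direction $\lambda_1(\Lambda)\cdots\lambda_d(\Lambda)\lesssim_d \text{covol}(\Lambda)$ is never actually proved. The half you do prove is fine: the projection inequality $\lambda_{k-1}(\pi_1(\Lambda))\le\lambda_k(\Lambda)$ plus the induction hypothesis gives $\text{covol}(\Lambda)=\|v_1\|\,\text{covol}(\pi_1(\Lambda))\lesssim_d\lambda_1(\Lambda)\cdots\lambda_d(\Lambda)$. But your ``reverse direction'' via Theorem \ref{thmA5} and Hadamard is the \emph{same} inequality again: Hadamard gives $\text{covol}(\Lambda)=|\det(v_1,\dots,v_d)|\le\prod_j\|v_j\|\asymp_d\prod_j\lambda_j(\Lambda)$, i.e.\ $\text{covol}(\Lambda)\lesssim_d\prod_j\lambda_j(\Lambda)$, not $\prod_j\lambda_j(\Lambda)\lesssim_d\text{covol}(\Lambda)$; labelling it ``exactly the lower bound'' flips the inequality. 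Your final paragraph compounds the confusion (you quote the induction hypothesis as $\text{covol}(\pi_1(\Lambda))\gtrsim_d\prod_k\lambda_k(\pi_1(\Lambda))$ where you need the opposite direction), and the claim that the hard half of Minkowski's theorem is ``entirely sidestepped'' is not correct: even with an unspecified constant $c_d$, the bound $\prod_j\lambda_j(\Lambda)\le c_d\,\text{covol}(\Lambda)$ needs an argument, and neither Hadamard nor the one-sided inequality $\lambda_{k-1}(\pi_1(\Lambda))\le\lambda_k(\Lambda)$ can deliver it, since both go the other way.

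The missing ingredient is the reverse comparison $\lambda_k(\Lambda)\lesssim_d\lambda_{k-1}(\pi_1(\Lambda))$, which the proof of Theorem \ref{thmA5} already contains: there one shows $\lambda_j(\Lambda)\le\max\{\|v_1\|,\dots,\|v_j\|\}\lesssim_d\|w_j\|\asymp_d\lambda_{j-1}(\pi_1(\Lambda))$, so in fact $\lambda_j(\Lambda)\asymp_d\lambda_{j-1}(\pi_1(\Lambda))$ for $2\le j\le d$. With this two-sided relation your reduction closes immediately: $\prod_{k=1}^{d-1}\lambda_k(\pi_1(\Lambda))\asymp_d\prod_{k=2}^{d}\lambda_k(\Lambda)$, and then $\text{covol}(\Lambda)=\|v_1\|\,\text{covol}(\pi_1(\Lambda))$ together with the induction hypothesis gives both directions at once --- which is precisely the paper's proof, where the basis $w_2,\dots,w_d$ of $\pi_1(\Lambda)$ with $\|w_j\|\asymp_d\|v_j\|\asymp_d\lambda_j(\Lambda)$ is imported from the proof of Theorem \ref{thmA5}. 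Alternatively you could prove that the basis produced by Theorem \ref{thmA5} has orthogonality defect bounded by a dimensional constant, i.e.\ $\prod_j\|v_j\|\lesssim_d|\det(v_1,\dots,v_d)|$, but that is an additional argument in its own right, not a consequence of Hadamard's inequality.
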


\begin{proof}
Like we did in the previous proof, we still proceed by induction. The case $d=1$ is obvious.

Assume the statement holds for lattices with rank less than or equal to $d-1$. Let $v_1$ be any nonzero vector in $\Lambda$ satisfying $\|v_1\|=\lambda_1(\Lambda)$ and let $\pi_1$ be the projection of $\mathbb R^d$ onto $v_1^{\perp}$, the hyperplane in $\mathbb R^d$ orthogonal to $v_1$ as in the previous lemma.

Now applying Lemma \ref{lma4} (1) to the $d-1$ dimensional hyperplane $v_1^{\perp}$ and the rank $d-1$ lattice $\pi_1(\Lambda)$ contained in $v_1^{\perp}$ yields a basis $w_2\dots w_d$ of $\pi_1(\Lambda)$ with 
$$\|w_2\|= \lambda_1(\pi_1( \Lambda)), \|w_3\|\asymp_d \lambda_2(\pi_1( \Lambda)),\dots,\|w_d\| \asymp_d \lambda_{d-1}(\pi_1 (\Lambda)).$$  

By the induction hypothesis
$$\|w_2\|\cdots \|w_d\| \asymp_d \lambda_1(\pi_1(\Lambda))\cdots \lambda_{d-1}(\pi_1(\Lambda))\asymp_d \text{covol}(\pi_1(\Lambda)).$$

On the other hand, from the proof of the Theorem \ref{thmA5}, we know $$\|w_j\| \asymp_d \|v_j\|\asymp_d \lambda_j(\Lambda), j=2,\dots d.$$

Since $\|v_1\|=\lambda_1(\Lambda)$ by construction, by Lemma \ref{lma4} (2),
$$\text{covol}(\Lambda)=\text{covol}(\pi_1(\Lambda))\cdot \|v_1\|\asymp_d \lambda_1(\Lambda)\cdots \lambda_d(\Lambda).$$

\end{proof}

\subsection{Continuity of successive minima}

Next, we study the continuity of successive minima on the space of lattices.

\begin{lemma}\label{bound}
Let $b \in \SL (d,\R)$ and $\|b\|_{op}$ denotes the operator norm of $b$, then we have for all $i=1,2,\dots, d$ and unimodular lattice $\Lambda$, the inequality
\begin{equation}
	\lambda_i(b\Lambda) \le \|b\|_{op} \lambda_i(\Lambda) 
\end{equation}
	
\end{lemma}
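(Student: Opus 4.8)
The plan is to argue directly from the variational definition of the successive minima, transporting a witnessing family of short linearly independent vectors through the linear map $b$.

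First I would fix $i \in \{1,\dots,d\}$ and the unimodular lattice $\Lambda$, and invoke discreteness of $\Lambda$: the infimum in the definition of each $\lambda_j(\Lambda)$ is attained, so there exist linearly independent vectors $v_1,\dots,v_i \in \Lambda$ with $\|v_j\| = \lambda_j(\Lambda)$ for $j = 1,\dots,i$. By monotonicity of the successive minima, $\|v_j\| \le \lambda_i(\Lambda)$ for every $j \le i$. Next, since $b \in \SL(d,\R)$ is invertible, the vectors $bv_1,\dots,bv_i$ lie in the lattice $b\Lambda$ and remain linearly independent, and by the definition of the operator norm $\|bv_j\| \le \|b\|_{op}\,\|v_j\| \le \|b\|_{op}\,\lambda_i(\Lambda)$ for each $j$. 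Thus the set $\{\, r \in b\Lambda : \|r\| \le \|b\|_{op}\,\lambda_i(\Lambda) \,\}$ contains $i$ linearly independent vectors, so by definition $\lambda_i(b\Lambda) \le \|b\|_{op}\,\lambda_i(\Lambda)$, which is the claimed inequality.

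There is no serious obstacle here; the only points that deserve an explicit word are that the minima are realized by genuine lattice vectors (this uses that $\Lambda$ is discrete) and that an invertible linear map sends a lattice to a lattice while preserving both the number of vectors and their linear independence, so the count of short independent vectors does transfer. Finally, I would remark that applying the same inequality with $b$ replaced by $b^{-1} \in \SL(d,\R)$ and $\Lambda$ replaced by $b\Lambda$ yields $\lambda_i(\Lambda) \le \|b^{-1}\|_{op}\,\lambda_i(b\Lambda)$, giving the two-sided control $\|b^{-1}\|_{op}^{-1}\,\lambda_i(\Lambda) \le \lambda_i(b\Lambda) \le \|b\|_{op}\,\lambda_i(\Lambda)$ that is the form typically needed for the continuity statement that follows.
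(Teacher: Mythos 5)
Your proof is correct and follows essentially the same route as the paper: realize the successive minima by linearly independent lattice vectors, push them through $b$, and use $\|bv\| \le \|b\|_{op}\|v\|$ together with preservation of linear independence to bound $\lambda_i(b\Lambda)$. The closing remark about applying the inequality to $b^{-1}$ is exactly how the paper later uses the lemma in the continuity argument.
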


\begin{proof}
	For $i=1,2,\dots, d$, let $v_1,\dots, v_i$ denote the $i$ linearly independent vectors in $\mathbb R^d$ such that 
	$$\|v_i\|=\lambda_i(\Lambda).$$
	
	Consider the vectors $bv_1, \dots, bv_i$. Since $b\in \SL(d,\R)$, $bv_1, \dots, bv_i$ are again linearly independent. From $\|bv_i\|\le \|b\|_{op} \|v_i\|$ it follows that $bv_1, \dots, bv_i$ are contained in a ball of radius $\|b\|_{op} \lambda_i(\Lambda)$. So it follows that $\lambda_i(b\Lambda) \le \|b\|_{op} \lambda_i(\Lambda)$.	
	\end{proof}
\begin{theorem}\label{thm:cts}
$\lambda_i(\cdot)$ are continuous functions on the space of unimodular lattices $\mathcal{L}$ for $i=1,2,\cdots d$.
\end{theorem}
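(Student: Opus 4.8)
The plan is to show that each $\lambda_i$ is both upper semicontinuous and lower semicontinuous at an arbitrary unimodular lattice $\Lambda_0$, by exploiting the fact that a small neighborhood of $\Lambda_0$ in $\mathcal{L}$ is of the form $\{b\Lambda_0 : b \in U\}$ where $U$ is a small neighborhood of the identity in $\SL(d,\R)$. The key input is Lemma \ref{bound}: for $b \in \SL(d,\R)$ we have $\lambda_i(b\Lambda) \le \|b\|_{op}\,\lambda_i(\Lambda)$ for all $i$, and applying this to $b^{-1}$ as well gives the two-sided bound $\|b^{-1}\|_{op}^{-1}\lambda_i(\Lambda) \le \lambda_i(b\Lambda) \le \|b\|_{op}\lambda_i(\Lambda)$.

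First I would fix $\Lambda_0 \in \mathcal{L}$ and let $\Lambda = b\Lambda_0$ be a nearby lattice, where $b \to I$ as $\Lambda \to \Lambda_0$ in $\mathcal{L}$ (this is exactly the local structure of the space of unimodular lattices as $\SL(d,\R)/\SL(d,\Z)$, or can be taken as the definition of the topology on $\mathcal{L}$ via matrix perturbation of a generating set). Then from the displayed two-sided inequality,
\begin{equation*}
\left| \lambda_i(b\Lambda_0) - \lambda_i(\Lambda_0) \right| \le \max\left\{ \|b\|_{op} - 1,\ 1 - \|b^{-1}\|_{op}^{-1} \right\}\lambda_i(\Lambda_0).
\end{equation*}
Since $\|b\|_{op} \to 1$ and $\|b^{-1}\|_{op} \to 1$ as $b \to I$ (the operator norm is continuous and matrix inversion is continuous on a neighborhood of $I$), the right-hand side tends to $0$, which gives continuity of $\lambda_i$ at $\Lambda_0$. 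Because $\Lambda_0$ was arbitrary, $\lambda_i$ is continuous on all of $\mathcal{L}$.

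The only genuine obstacle is being careful about what "close" means in $\mathcal{L}$: one must know that convergence $\Lambda_n \to \Lambda_0$ can be realized by matrices $b_n \to I$ with $\Lambda_n = b_n\Lambda_0$. If the paper has already fixed the identification $\mathcal{L} \cong \SL(d,\R)/\SL(d,\Z)$ with its quotient topology this is immediate; otherwise one invokes the standard fact (Mahler/geometry of numbers) that the topology on the space of lattices is generated by such perturbations. Granting that, there is no real computation left — the estimate above does everything, uniformly in $i$. I would also remark that the same argument shows each $\lambda_i$ is locally Lipschitz on $\mathcal{L}$ in the metric induced from $\SL(d,\R)$, not merely continuous.
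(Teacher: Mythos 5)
Your proposal is correct and follows essentially the same route as the paper: it rests on Lemma \ref{bound} applied to both $b$ and $b^{-1}$ to get a two-sided bound on $\lambda_i(b\Lambda_0)/\lambda_i(\Lambda_0)$, together with the fact that convergence $\Lambda_n\to\Lambda_0$ in $\mathcal{L}\cong \SL(d,\R)/\SL(d,\Z)$ can be realized by representatives $b_n\to I$ in $\SL(d,\R)$. The paper merely spells out that last lifting step explicitly (via a right-invariant metric and a choice of nearest $\gamma\in\Gamma$), which you correctly identified as the only point needing care; your quantitative estimate even gives the slightly stronger local Lipschitz statement.
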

\begin{proof}
We may identify $\mathcal{L}$ with the homogeneous space $G/\Gamma := \SL(d,\R)/ \SL(d,\Z)$
    By the Lemma \ref{bound}, we have for any $b,c\in \SL(d,\R)$,
\begin{equation}\label{eq:ineq}
    \frac{1}{\|b\|_{op}}\lambda_i(b\Lambda) \le  \lambda_i(\Lambda) \le \|c\|_{op} \lambda_i(c^{-1}\Lambda).
\end{equation}
    
For any $\Lambda \in \mathcal{L}$, we may write $\Lambda=g\Z^d$ for some $g\in \SL(d,\R)$, identified with $g\Gamma$. For any convergent sequence of lattices 
\begin{equation}\label{eq:conv}
   g_i\Gamma \to g\Gamma, t\to \infty 
\end{equation}

which is equivalent to the convergence 
$g^{-1}g_j\Gamma \to \Gamma$.

Let $d$ denote any right-invariant metric on $G$ and define a metric $d'$ on $G/\Gamma$ by 
$$d'(g\Gamma,h\Gamma):=\inf_{\gamma_1,\gamma_2 \in \Gamma} d(g\gamma_1, h \gamma_2).$$

For each $i$, we may choose $\gamma_i \in \Gamma$ as the element closest to $g^{-1}g_i$, namely 
$$d(g^{-1}g_j,\gamma_j)=\min_{\gamma\in \Gamma} d(g^{-1}g_i,\gamma)=d'(g^{-1}g_j\Gamma,\Gamma).$$

It follows that the condition $d'(g_j\Gamma, g\Gamma) \to 0$ is equivalent to $d(g^{-1}g, \gamma_j)\to 0$. Therefore, by replacing the representative $g_j$ in $g_j\Gamma$ with $g_i\gamma$. We may assume $g_j\to g$ for the equation \ref{eq:conv}.

Now taking $b=g_jg^{-1}$ and $c=b^{-1}$ in the inequality \ref{eq:ineq}, we have
$$\lambda_i(g_jg^{-1}\Lambda) \to \lambda(\Lambda).$$
and therefore $\lambda_i$ is continuous on $G/\Gamma$.
\end{proof}

\subsection{Sucessive minima and the dual lattice}

Recall that for a lattice $\Lambda\subset \R^d$ with basis $\{\bb_1,\cdots,\bb_d\}$, since $\bb_1,\cdots,\bb_d$ are linearly independent, from linear algebra we know there exist vectors $\bb_1^*, \cdots \bb_d^*$, call \textit{dual vectors} to 
$\bb_1,\cdots,\bb_d$, such that
$$\langle \bb_i,\bb_i^*\rangle=
\begin{cases}
0 & i\ne j\\
1 & i=j
\end{cases}.
$$

\vspace{5mm}
The $\Z$-span of dual basis vectors, namely $\Lambda^*:=\text{Span}\{\bb_1^*, \cdots \bb_d^*\}$, is called the \textit{dual (or polar or reciprocal) lattice} to the lattice $\Lambda$. \label{dual lattice}

Although defined through basis, it turns out that the dual lattices are independent of the choice of basis of the original lattice.

\begin{proposition}
The dual lattice $\Lambda^*$ consists of all vectors $\bb^* \in \R^d$ such that $\langle \bb^*,\bb \rangle$ is an integer for all $\bb$ in $\Lambda$. As a consequence, $\Lambda^*$ is also the dual of $\Lambda$.
\end{proposition}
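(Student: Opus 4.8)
The plan is to prove the set equality $\Lambda^* = \{\bb^* \in \R^d : \langle \bb^*, \bb\rangle \in \Z \text{ for all } \bb \in \Lambda\}$, which I will call $M(\Lambda)$, by a two-way inclusion, and then derive the consequence that $(\Lambda^*)^* = \Lambda$. Fix a basis $\{\bb_1,\dots,\bb_d\}$ of $\Lambda$ and let $\{\bb_1^*,\dots,\bb_d^*\}$ be the dual vectors, so that $\Lambda^* = \Span_\Z\{\bb_1^*,\dots,\bb_d^*\}$ by definition.

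For the inclusion $\Lambda^* \subseteq M(\Lambda)$: take any $\bb^* = \sum_{i=1}^d m_i \bb_i^*$ with $m_i \in \Z$ and any $\bb = \sum_{j=1}^d n_j \bb_j$ with $n_j \in \Z$. Then by bilinearity and the defining relation $\langle \bb_i, \bb_j^*\rangle = \delta_{ij}$, we get $\langle \bb^*, \bb\rangle = \sum_{i,j} m_i n_j \langle \bb_i^*, \bb_j\rangle = \sum_i m_i n_i \in \Z$. For the reverse inclusion $M(\Lambda) \subseteq \Lambda^*$: let $\bb^* \in M(\Lambda)$. Since $\{\bb_1^*,\dots,\bb_d^*\}$ is an $\R$-basis of $\R^d$ (the Gram-type relation forces linear independence), write $\bb^* = \sum_i c_i \bb_i^*$ with $c_i \in \R$. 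Pairing with $\bb_j \in \Lambda$ gives $\langle \bb^*, \bb_j\rangle = c_j$, and by hypothesis this is an integer; hence all $c_j \in \Z$ and $\bb^* \in \Lambda^*$. This establishes the first sentence of the proposition; note the characterization makes no reference to the chosen basis, which simultaneously gives the well-definedness remarked just before the statement.

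For the consequence: apply the characterization just proved to the lattice $\Lambda^*$ in place of $\Lambda$. This yields $(\Lambda^*)^* = \{\bb \in \R^d : \langle \bb, \bb^*\rangle \in \Z \text{ for all } \bb^* \in \Lambda^*\}$. By the inclusion $\Lambda \subseteq M(\Lambda) = \Lambda^*$ already shown — read symmetrically — every $\bb \in \Lambda$ pairs integrally with every element of $\Lambda^*$, so $\Lambda \subseteq (\Lambda^*)^*$. For the reverse, one uses that $\Lambda$ and $(\Lambda^*)^*$ both have rank $d$ and compares covolumes: the dual of a lattice with a given basis matrix $B$ has basis matrix $(B^{-1})^\top$, so $\operatorname{covol}(\Lambda^*) = \operatorname{covol}(\Lambda)^{-1}$ and hence $\operatorname{covol}((\Lambda^*)^*) = \operatorname{covol}(\Lambda)$; a full-rank sublattice of equal covolume must coincide with the ambient lattice, giving $(\Lambda^*)^* = \Lambda$.

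The only genuinely delicate point is the reverse inclusion $M(\Lambda) \subseteq \Lambda^*$ — it is where one must actually use that $\Lambda$ is a full-rank lattice (so that $\{\bb_i^*\}$ spans $\R^d$ and the coordinates $c_i$ are pinned down by pairing against the $\bb_j$); the forward inclusion and the bilinearity computations are routine. I would present the two inclusions as short displayed computations and state the covolume fact for $(\Lambda^*)^*=\Lambda$ with a one-line justification via the matrix description $B \mapsto (B^{-1})^\top$, since a reader at this point of the paper will accept it readily.
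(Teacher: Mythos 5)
Your two-inclusion argument for the set equality is exactly the paper's proof: expand both vectors in the chosen basis and its dual, use $\langle \bb_i,\bb_j^*\rangle=\delta_{ij}$ for the easy inclusion, and pair an arbitrary element of $M(\Lambda)$ against each $\bb_j$ to force integer coordinates for the reverse one, so the main part is correct and matches. The paper never actually proves the ``consequence'' clause, so your additional argument is a welcome and correct supplement; note only that the covolume comparison is unnecessary, since the defining relations are symmetric in $\{\bb_i\}$ and $\{\bb_i^*\}$ (equivalently, applying $B\mapsto ({}^tB)^{-1}$ twice returns $B$), which gives $(\Lambda^*)^*=\Span_{\Z}\{\bb_1,\dots,\bb_d\}=\Lambda$ directly.
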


\begin{proof}
Let $\bb_1,\cdots,\bb_d$ be a basis of the lattice $\Lambda$ and their duals be $\bb_1^*,\cdots,\bb_d^*$. For any $\bb \in \Lambda$ and any $\cc \in \Lambda^*$, suppose
$$\bb=s_1\bb_1+\cdots+s_d\bb_d, \text{ and } \cc=t_1\bb_1^*+\cdots+t_d\bb_d^*$$

with integer coefficients $s_i,t_i\in \Z$ for $i=1,2,\cdots, d.$ We have immediately that 
$$\langle \bb,\cc \rangle = s_1t_1\cdots+s_d t_d\in \Z.$$

On the other hand, if $\bb^*= u_1\bb_1^*+\cdots+u_d\bb_d^* \in \R^d$, where $u_i \in \R$ satisfies $\langle \bb^*,\bb \rangle \in \Z,$ for any $\bb \in \Z$, then in particular this holds for  $\bb=\bb_i$, for any $i=1,2,\cdots,d$ and thus
$$u_i=\langle \bb^*,\bb_i \rangle  \in \Z. $$
Therefore $\bb^* \in \Lambda^*$.

\end{proof}

The dual lattice operator commutes nicely with an invertible linear transformation on $\R^d$:

\begin{proposition}\label{dual is same as transpose inverse}
Let $\Lambda$ be a lattice on $\R^d$ and $T:\R^d \to \R^d$ be an invertible linear transformation, then we have
$$(T\Lambda)^*=T^*\Lambda^*,$$
where $T^*={}^tT^{-1}$ is the inverse of the transpose of $T$ and $\Lambda^*$ is the dual lattice to $\Lambda$. 
\end{proposition}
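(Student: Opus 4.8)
The plan is to reduce everything to the intrinsic characterization of the dual lattice established in the preceding proposition: for any lattice $M \subset \R^d$, one has $M^* = \{\, y \in \R^d : \langle y, x\rangle \in \Z \text{ for all } x \in M \,\}$. First I would observe that since $T$ is an invertible linear map, $T\Lambda$ is again a full-rank lattice in $\R^d$, so this characterization legitimately applies with $M = T\Lambda$; this is the only point where one must be a little careful, but the preceding proposition is stated for arbitrary lattices, not just ones presented by an explicit basis, so the invocation is immediate.

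Next I would simply unwind the definitions. By the characterization, $y \in (T\Lambda)^*$ if and only if $\langle y, Tx\rangle \in \Z$ for every $x \in \Lambda$. The single computation needed is the adjoint identity $\langle y, Tx\rangle = \langle {}^tT y,\, x\rangle$ for the Euclidean inner product on $\R^d$. Hence $y \in (T\Lambda)^*$ if and only if $\langle {}^tT y,\, x\rangle \in \Z$ for all $x \in \Lambda$, which, applying the characterization now to the lattice $\Lambda$ itself, says precisely that ${}^tT y \in \Lambda^*$.

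Finally I would rephrase ${}^tT y \in \Lambda^*$ as $y \in ({}^tT)^{-1}\Lambda^* = {}^tT^{-1}\Lambda^* = T^*\Lambda^*$, where invertibility of ${}^tT$ follows from invertibility of $T$. This yields the set equality $(T\Lambda)^* = T^*\Lambda^*$, completing the proof. I do not expect any genuine obstacle: the argument is a two-line chain of equivalences once the dual-lattice characterization is in hand, and its only subtlety is bookkeeping about which lattice the characterization is being applied to at each step.
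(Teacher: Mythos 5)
Your proof is correct, but it follows a genuinely different route from the paper. The paper argues with an explicit basis: it takes a basis $\bb_1,\dots,\bb_d$ of $\Lambda$, notes that $T\bb_1,\dots,T\bb_d$ is a basis of $T\Lambda$, and then verifies that the vectors ${}^tT^{-1}\bb_i^*$ satisfy the defining dual-basis relations $\langle (T\bb_i)^*, T\bb_j\rangle=\delta_{ij}$ by writing both candidate dual systems as left inverses of the matrix $[T\bb_1\ \cdots\ T\bb_d]$ and invoking uniqueness of the inverse matrix. You instead bypass bases entirely, using the intrinsic characterization $M^*=\{y:\langle y,x\rangle\in\Z \ \forall x\in M\}$ from the preceding proposition together with the adjoint identity $\langle y,Tx\rangle=\langle {}^tT y,x\rangle$, which gives the set equality $(T\Lambda)^*=({}^tT)^{-1}\Lambda^*=T^*\Lambda^*$ as a short chain of equivalences. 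Your version is cleaner and manifestly basis-independent (it also quietly reproves that $(T\Lambda)^*$ does not depend on a choice of basis), at the cost of leaning on the preceding proposition; the paper's version is more computational but self-contained at the level of dual bases. Both are complete proofs, and your observation that $T\Lambda$ is again a full-rank lattice, so the characterization applies, is exactly the right point to flag.
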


\begin{proof}
If $\bb_1,\cdots, \bb_d$ is a basis of $\Lambda$, then $\bb_1,\cdots,T\bb_d$
is a basis of $T\Lambda$. The corresponding dual basis
$$(T\bb_1)^*,\cdots,(T\bb_d)^*$$
satisfy 
\begin{equation*}
\begin{bmatrix}
{}^t(T\bb_1)^* \\
\vdots \\
{}^t(T\bb_1)^* 
\end{bmatrix}  
\begin{bmatrix}
T\bb_1 & \cdots & T\bb_d 
\end{bmatrix} =I_d 
\end{equation*}
But on the other hand,
\begin{equation*}
     \begin{bmatrix}
{}^t({}^tT^{-1}\bb_1) \\
\vdots \\
{}^t({}^t T^{-1}\bb_d) 
\end{bmatrix}  
\begin{bmatrix}
T\bb_1 & \cdots & T\bb_d 
\end{bmatrix} =I_d 
\end{equation*}
So by the uniqueness of inverse matrix, $T^*\bb_i={}^t T^{-1}\bb_i= (Tb_i)$, for any $i=1,2\cdots,d$. Therefore $(T\Lambda)^*=T^*\Lambda^*.$
\end{proof}

The following theorem associates the successive minima of a lattice and those of its dual:

\begin{theorem}[\cite{CA97} Chapter VIII, Theorem VI]\label{sucessive minima of dual lattice}
Let $\lambda_1,\cdots,\lambda_d$ be the successive minima of lattices in $\R^d$. Then for any lattice $\Lambda$ and its dual $\Lambda^*$, we have
$$1\le \lambda_r(\Lambda) \lambda_{d+1-r} (\Lambda^*) \le d!$$
for any $r=1,2,\cdots d$.
\end{theorem}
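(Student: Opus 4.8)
The plan is to prove the two inequalities separately; the lower bound is essentially formal, while the upper bound carries the real weight. For the lower bound $\lambda_r(\Lambda)\lambda_{d+1-r}(\Lambda^*)\ge 1$, choose linearly independent $v_1,\dots,v_r\in\Lambda$ with $\|v_i\|=\lambda_i(\Lambda)$ (take $v_i$ a shortest vector of $\Lambda$ outside $\R v_1+\dots+\R v_{i-1}$) and linearly independent $w_1,\dots,w_{d+1-r}\in\Lambda^*$ with $\|w_j\|=\lambda_j(\Lambda^*)$. The span of the $w_j$ has dimension $d+1-r$, which exceeds $\dim(\R v_1+\dots+\R v_r)^{\perp}=d-r$, so some $w_{j_0}$ is not orthogonal to every $v_i$; hence $\langle v_{i_0},w_{j_0}\rangle\ne 0$ for some $i_0$. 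By the characterization of the dual lattice ($\Lambda^*=\{x:\langle x,v\rangle\in\Z\ \forall v\in\Lambda\}$) this pairing is a nonzero integer, so it has absolute value at least $1$, and the Cauchy--Schwarz inequality gives $1\le|\langle v_{i_0},w_{j_0}\rangle|\le\|v_{i_0}\|\,\|w_{j_0}\|\le\lambda_r(\Lambda)\lambda_{d+1-r}(\Lambda^*)$.

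For the upper bound I would first reduce to a single product estimate. Since the inequality $\lambda_s(\Lambda)\lambda_{d+1-s}(\Lambda^*)\ge 1$ just established holds for \emph{every} $s\in\{1,\dots,d\}$, each factor with $s\ne r$ in the product below is at least $1$, whence
\begin{align*}
\lambda_r(\Lambda)\,\lambda_{d+1-r}(\Lambda^*) &\le \prod_{s=1}^{d}\lambda_s(\Lambda)\,\lambda_{d+1-s}(\Lambda^*) \\
&= \Big(\prod_{s=1}^{d}\lambda_s(\Lambda)\Big)\Big(\prod_{s=1}^{d}\lambda_s(\Lambda^*)\Big),
\end{align*}
the last equality coming from the substitution $s\mapsto d+1-s$ in the second family. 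So it suffices to bound the product of all successive minima of $\Lambda$ times that of $\Lambda^*$ by $d!$. For this I would invoke the sharp \emph{upper} half of Minkowski's Second Convex Body Theorem with respect to the Euclidean unit ball: writing $\omega_d$ for the volume of that ball, $\lambda_1(\Lambda)\cdots\lambda_d(\Lambda)\,\omega_d\le 2^d\,\text{covol}(\Lambda)$, and likewise for $\Lambda^*$. Multiplying these two and using $\text{covol}(\Lambda)\,\text{covol}(\Lambda^*)=1$ — immediate from Proposition~\ref{dual is same as transpose inverse} applied to $\Z^d$, since $\Z^d$ is its own dual, so $\Lambda^*={}^tg^{-1}\Z^d$ when $\Lambda=g\Z^d$ — gives $\big(\prod_s\lambda_s(\Lambda)\big)\big(\prod_s\lambda_s(\Lambda^*)\big)\le 4^d/\omega_d^2$. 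It then remains to check the elementary inequality $4^d\le d!\,\omega_d^2$; using $\omega_d=\pi^{d/2}/\Gamma(d/2+1)$ this reads $(4/\pi)^d\,\Gamma(d/2+1)^2\le\Gamma(d+1)$, which holds with equality at $d=1$ and, for larger $d$, is verified directly in small cases and by standard bounds on the central binomial coefficient $\Gamma(d+1)/\Gamma(d/2+1)^2$ in general.

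The one genuinely non-elementary ingredient is the sharp constant $2^d$ in the upper half of Minkowski's Second Theorem: this is stronger than the $\asymp_d$ version recorded as Theorem~\ref{Min2} above, so one must either re-run a proof of that theorem while tracking constants or simply cite Minkowski's original result. I expect this to be the main obstacle; everything else — the dimension count in the lower bound, the peeling-off of factors using $\lambda_s\lambda_{d+1-s}\ge 1$, the covolume reciprocity, and the closed-form estimate for $\omega_d$ — is routine. One small caveat to keep in mind: the statement concerns an arbitrary, not necessarily unimodular, lattice, so one must not normalize $\text{covol}(\Lambda)=1$; only the \emph{product} $\text{covol}(\Lambda)\,\text{covol}(\Lambda^*)$ is pinned down.
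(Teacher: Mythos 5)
Your argument is correct, and there is nothing in the paper to compare it against: the paper states this result as a quotation of Cassels (Chapter VIII, Theorem VI) and gives no proof of its own. What you wrote is essentially the classical transference argument. The lower bound is fine: the dimension count forces some $\langle v_{i_0},w_{j_0}\rangle\ne 0$, this pairing is a nonzero integer by the characterization of $\Lambda^*$ (Proposition 1.12 of the paper), and Cauchy--Schwarz together with $i_0\le r$, $j_0\le d+1-r$ gives $1\le\lambda_r(\Lambda)\lambda_{d+1-r}(\Lambda^*)$. The upper bound is also sound: peeling off the factors with $s\ne r$ using the lower bound, applying the sharp form $\lambda_1\cdots\lambda_d\,\omega_d\le 2^d\,\mathrm{covol}(\Lambda)$ to both $\Lambda$ and $\Lambda^*$, and using $\mathrm{covol}(\Lambda)\,\mathrm{covol}(\Lambda^*)=1$ (which does follow from Proposition \ref{dual is same as transpose inverse} for any invertible $g$, not just unimodular ones, so your caveat about not normalizing the covolume is handled correctly) reduces everything to $4^d\le d!\,\omega_d^2$. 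That inequality is exactly Mahler's volume bound specialized to the self-polar Euclidean ball; it holds for all $d$, with equality at $d=1$, and one checks easily that the ratio $\Gamma(d+1)/\bigl((4/\pi)^d\Gamma(d/2+1)^2\bigr)$ is increasing in $d$, so your "small cases plus central binomial estimates" plan goes through. The one dependency you correctly flag is the sharp constant $2^d$ in Minkowski's Second Theorem: the paper's Theorem \ref{Min2} only gives the $\asymp_d$ version, so you must cite Minkowski's original theorem (or Cassels) for the sharp upper half; with that citation in place the proof is complete, and it is in spirit the same route Cassels takes (he works with general convex bodies and polar bodies, where your final inequality appears as the Mahler-type volume estimate).
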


Now let us return to our proof, for the flow of lattices $(g_tu_A\Lambda)$, the proposition above gives its dual as:
\begin{align*}
    (g_tu_A\Z^d)^*= & g_t^* u_A^*(\Z^d)^* \\
                     = & {}^t g_t^{-1} \cdot {}^t u_A^{-1} \Z^d\\
                     = & {}^t\begin{bmatrix}e^{t/m}I_m & 0 \\0 & e^{-t/n}I_n \end{bmatrix}^{-1}\cdot
                     {}^t\begin{bmatrix}I_m & A \\0 & I_n \end{bmatrix}^{-1}\Z^d\\
                     = & \begin{bmatrix}e^{-t/m}I_m & 0 \\0 & e^{t/n}I_n \end{bmatrix}\cdot
                     \begin{bmatrix}I_m & 0 \\-{}^tA & I_n \end{bmatrix}\Z^d\\
\end{align*}

\section{The measure-theoretical distribution of $\la_i(\La)$ in the space of unimodular lattices}

\subsection{Haar measure on the space of unimodular lattices}
In this subsection we shall recall a few definitions and results on Siegel sets and the probability Haar measure on the space of unimodular lattices, identified with $G/\Gamma:=\SL(d,\R)/ \SL(d,\Z)$. 

\vspace{3mm}
The main reference for the following is \cite{BM00} Chapter V and \cite{Fo15} Section 2.6. 

Let $K:=\SO(d,\R)$,
$$A:=\{\text{diag}(a_1,...a_d):a_1\cdots a_d=1,a_i>0, \forall i=1,2,\dots, d\},$$
the diagonal subgroup of $\SL(d,\R)$ with positive entries and 
$$N:=\{(n_{ij})\in \SL(d,\R): n_{ii}=1, n_{ij}=0,\forall i<j\},$$
the subgroup of upper triangular unipotent matrices in $G$. We have

\begin{theorem}[Iwasawa Decomposition]\label{iwasawa decomposition}
The product map 
$$K\times A \times N \to G, (k,a,n) \to kan$$
is a homeomorphism.
\end{theorem}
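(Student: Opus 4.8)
The plan is to prove the Iwasawa decomposition for $G = \SL(d,\R)$ by the Gram--Schmidt orthogonalization process, which gives the existence and uniqueness of the factorization at the level of sets, and then check that the product map and its inverse are both continuous so that it is a homeomorphism. I would first dispose of the uniqueness: if $k_1 a_1 n_1 = k_2 a_2 n_2$, then $k_2^{-1} k_1 = a_2 n_2 n_1^{-1} a_1^{-1}$ lies in $K \cap (AN)$; since an element of $AN$ is upper triangular with positive diagonal entries, and an element of $K = \SO(d,\R)$ is orthogonal, the only such matrix is the identity (an upper triangular orthogonal matrix is diagonal with entries $\pm 1$, and positivity of the diagonal forces it to be $I_d$), so $k_1 = k_2$, and then $a_1 n_1 = a_2 n_2$ forces $a_1 = a_2$, $n_1 = n_2$ by comparing diagonal and off-diagonal entries. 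Here I am using the convention fixed in the excerpt that $N$ consists of lower-triangular unipotent matrices (the definition reads $n_{ij} = 0$ for $i < j$), so "upper triangular" above should be read as the appropriate triangular shape; the argument is symmetric.

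Next, for existence I would apply Gram--Schmidt to the columns of an arbitrary $g \in \SL(d,\R)$ in the appropriate order. Writing $g = (g_1 \mid \cdots \mid g_d)$ with columns $g_j \in \R^d$, the Gram--Schmidt process produces an orthonormal basis $q_1, \dots, q_d$ together with a factorization $g = Q T$ where $Q = (q_1 \mid \cdots \mid q_d)$ is orthogonal and $T$ is triangular with positive diagonal entries $t_{jj} = \|g_j - \text{proj}_{\Span\{\cdots\}} g_j\| > 0$ (nonvanishing because $g$ is invertible). This is the classical QR decomposition. Then $\det g = 1$ forces $\det Q \cdot \det T = 1$; since $\det T > 0$ and $\det Q = \pm 1$, we get $\det Q = 1$, so $Q \in \SO(d,\R) = K$, and $\det T = 1$. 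Finally I factor $T = a n$ by pulling out the diagonal: let $a = \diag(t_{11}, \dots, t_{dd}) \in A$ (it has determinant $1$ and positive entries) and $n = a^{-1} T$, which is triangular with all diagonal entries equal to $1$, hence lies in $N$. This exhibits $g = k a n$.

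For the topological statement, the product map $(k,a,n) \mapsto kan$ is continuous because matrix multiplication is continuous. For the inverse, the Gram--Schmidt formulas express $q_1, \dots, q_d$, and hence the entries of $Q$, $T$, $a$, and $n$, as explicit rational functions (with square roots) of the entries of $g$, with denominators that are the partial norms $t_{11} \cdots t_{jj}$, all of which are strictly positive on $\SL(d,\R)$; therefore $g \mapsto (k,a,n)$ is continuous. A cleaner way to see the inverse is continuous, avoiding writing out formulas, is to note that $K$ is compact, so the projection $G \to K$ is proper-ish: given a convergent sequence $g_m \to g$, the $K$-components $k_m$ have a subsequence converging to some $k \in K$, whence $a_m n_m = k_m^{-1} g_m$ converges, and by uniqueness of the $AN$-factorization (which is continuous since $A \times N \to AN$ is a homeomorphism onto a closed subgroup, being a semidirect product of closed subgroups) the limit must agree with the factorization of $g$; this forces the whole sequence to converge, so the inverse map is continuous.

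The main obstacle I anticipate is purely bookkeeping rather than conceptual: being careful about the triangular convention used in the excerpt (the displayed definition of $N$ has $n_{ij} = 0$ for $i < j$, i.e. lower-triangular unipotent, so Gram--Schmidt must be run on the columns in the reverse order, from $g_d$ down to $g_1$, to land in the correct $N$), and verifying that the lattice of positive-diagonal triangular matrices of determinant one really does split as $A \ltimes N$ with both factors closed. Neither of these is hard, but the index gymnastics needs to be written out consistently. I expect the existence via QR/Gram--Schmidt and the uniqueness via $K \cap AN = \{I_d\}$ to be the heart of the argument, with continuity of the inverse handled by the compactness-of-$K$ argument above.
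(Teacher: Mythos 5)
The paper does not actually prove this theorem: it is quoted as a standard fact, with pointers to its references (\cite{BM00}, Chapter V and \cite{Fo15}, Section 2.6), so there is no in-text argument to compare yours against. Your Gram--Schmidt/QR proof is the standard one and is correct: uniqueness from $K\cap AN=\{I_d\}$ (a triangular orthogonal matrix with positive diagonal is the identity), existence from $g=QT$ with $\det T>0$ forcing $Q\in \SO(d,\R)$ and $\det T=1$, and the splitting $T=an$ by extracting the diagonal; continuity of $(k,a,n)\mapsto kan$ is clear, and continuity of the inverse follows from the explicit Gram--Schmidt formulas since the successive partial norms never vanish on invertible matrices. Two small remarks. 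First, the displayed condition in the paper's definition of $N$ (namely $n_{ij}=0$ for $i<j$) is evidently a typo for $i>j$: the surrounding text calls $N$ the upper triangular unipotent subgroup, and the Siegel-set computation later in the paper uses upper triangular $n$; so the usual column order in Gram--Schmidt already lands in the intended $N$, and the reversed-order variant you describe is not needed. Second, in your alternative compactness argument for continuity of the inverse, the phrase that $A\times N\to AN$ is a homeomorphism onto a closed subgroup ``being a semidirect product of closed subgroups'' is not a valid general principle; the correct justification here is concrete: $AN$ is exactly the set of upper triangular matrices in $\SL(d,\R)$ with positive diagonal, and a limit of such matrices that remains in $\SL(d,\R)$ is upper triangular with nonnegative diagonal entries whose product is $1$, hence lies in $AN$, after which uniqueness and the usual subsequence argument finish the job. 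This is a cosmetic repair, and since your first continuity argument is already complete, the proof as a whole stands.
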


\begin{definition}[Siegel Sets in $\SL(d,\R)$]~\\
A Siegel Set in $\SL(d,\R)$ is a set $\Sigma_{t,u}$ of the form 
$$\Sigma_{t,u}:=K A_t N_u,$$
where $t,u>0$ and $A_t$ and $N_u$ are given by
$$A_t:=\left \{\text{diag}(a_1,...a_d) \in A: \frac{a_i}{a_{i+1}}\le \frac{2}{\sqrt{3}}, i=1,2,...d-1 \right\}$$
and 
$$N_u:=\left \{(n_ij) \in N: |n_{ij}|\le u, \forall i<j \right\}$$
\end{definition}

It turns out that $\Sigma_{t,u}$ can cover the fundamental domain of $G:=\SL(d,\R)$ under the action of $\Gamma:=\SL(d,\Z)$:

\begin{theorem}\label{siegel sets cover the fundamental domain}
For $t\ge \frac{2}{\sqrt 3}$ and $u\ge \frac{1}{2}$, we have $G=\Sigma_{t,u} \Gamma$. As a result $\Sigma_{t,u}$ contains a fundamental domain of $G/\Gamma$.
\end{theorem}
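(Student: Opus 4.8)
The plan is to prove $G=\Sigma_{t,u}\Gamma$ by induction on $d$, running the classical reduction‑theory argument with Lemma \ref{lma4} as the engine of the inductive step; the second sentence of the theorem is then formal, since $\Sigma_{t,u}\Gamma=G$ says exactly that the quotient map $G\to G/\Gamma$ is surjective on $\Sigma_{t,u}$, so $\Sigma_{t,u}$ contains a (Borel) fundamental domain for the $\Gamma$‑action. The base case $d=1$ is trivial, so assume the statement in all dimensions $<d$.

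For the inductive step I would fix $g\in G$ and work with the unimodular lattice $\Lambda:=g\Z^d$. First choose $v_1\in\Lambda$ with $\|v_1\|=\lambda_1(\Lambda)$; a shortest nonzero vector is primitive, so $g^{-1}v_1$ completes to a $\Z$‑basis of $\Z^d$, i.e.\ it is the first column of some $\gamma\in\SL(d,\Z)$ (adjust one sign if the determinant is $-1$), and after replacing $g$ by $g\gamma$ we may assume $ge_1=v_1$. Let $\pi_1$ be the orthogonal projection onto $v_1^{\perp}$ as in Lemma \ref{lma4}; by part (2) the set $\bar\Lambda:=\pi_1(\Lambda)$ is a rank‑$(d-1)$ lattice of covolume $\|v_1\|^{-1}$ in $v_1^{\perp}\cong\R^{d-1}$, spanned by $\pi_1(ge_2),\dots,\pi_1(ge_d)$. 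Rescaling by $c:=\|v_1\|^{1/(d-1)}$ makes $c\bar\Lambda$ unimodular, so the induction hypothesis provides $\bar\gamma\in\SL(d-1,\Z)$ putting that frame into the $(d-1)$‑dimensional Siegel set; I then lift $\bar\gamma$ to $\mathrm{diag}(1,\bar\gamma)\in\SL(d,\Z)$ and replace $g$ once more, which leaves $ge_1=v_1$ untouched and makes the basis $w_j:=\pi_1(ge_j)$ ($2\le j\le d$) of $\bar\Lambda$ Siegel‑reduced up to the harmless scaling $c$. Finally, writing $ge_j=w_j+s_jv_1$ and applying the integer column operations $e_j\mapsto e_j-m_je_1\in\SL(d,\Z)$ with $m_j$ the nearest integer to $s_j$ brings each $s_j$ into $[-\tfrac12,\tfrac12)$; these operations add only $\Z$‑multiples of the first column $v_1\in\ker\pi_1$, hence disturb neither $ge_1$ nor any $w_j$, so none of the lower‑dimensional normalizations.

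To finish I would read off the Iwasawa decomposition $g'=kan$ of the resulting matrix by Gram–Schmidt on its columns and verify $a\in A_t$, $n\in N_u$. Its first diagonal entry is $a_1=\|v_1\|=\lambda_1(\Lambda)$; the orthogonal complement of the first column reproduces exactly the $(d-1)$‑dimensional data, so $a_i=c^{-1}\bar a_{i-1}$ for $i\ge2$, giving $a_i/a_{i+1}=\bar a_{i-1}/\bar a_i\le 2/\sqrt3$ for $i\ge2$ and keeping the lower unipotent coordinates bounded by $u$; and the newly created first‑row unipotent coordinates equal the reduced $s_j$, so have modulus $\le\tfrac12\le u$. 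The one inequality linking the two blocks, $a_1/a_2\le 2/\sqrt3$, is precisely Lemma \ref{lma4}(1): since $a_2=\|w_2\|$ is the length of a nonzero vector of $\pi_1(\Lambda)$, that lemma bounds it below by $\tfrac{\sqrt3}{2}\|v_1\|=\tfrac{\sqrt3}{2}a_1$. Hence $g'\in\Sigma_{t,u}$ whenever $t\ge 2/\sqrt3$ and $u\ge\tfrac12$; since $g'$ differs from the original $g$ by an element of $\Gamma$, this gives $G=\Sigma_{t,u}\Gamma$ and closes the induction.

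The main obstacle is organizational rather than conceptual. The delicate points are: (i) fixing the $KAN$‑convention so that Gram–Schmidt on the columns literally produces the $A_t$‑ and $N_u$‑inequalities as written — and it is exactly the constant $2/\sqrt3$ of Lemma \ref{lma4}(1) that is rigged to match the definition of $A_t$; (ii) ordering the three $\Gamma$‑reductions (extend‑to‑a‑basis, then the block $\mathrm{diag}(1,\bar\gamma)$, then the first‑row unipotent clean‑up) so that each later step is invisible to the data already fixed — which works only because the last batch of column operations touches only the first column $v_1\in\ker\pi_1$; and (iii) carrying the auxiliary rescaling by $c$ through the induction and checking it cancels in all the ratios that matter. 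None of these is hard individually, but they must all be arranged consistently.
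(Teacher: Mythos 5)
The paper itself gives no proof of Theorem \ref{siegel sets cover the fundamental domain}; it is quoted as classical reduction theory from the cited references (\cite{BM00}, Chapter V, and \cite{Fo15}, Section 2.6). Your argument is correct and is essentially the standard proof found there: induct on $d$, move a shortest (hence primitive) vector $v_1$ of $\Lambda=g\Z^d$ into the first column by some $\gamma\in\SL(d,\Z)$, project to $v_1^{\perp}$, apply the inductive hypothesis to the projected lattice via a block element $\mathrm{diag}(1,\bar\gamma)$, and clean up the first row by integer column operations that only add multiples of the first column; the bridging inequality $a_1/a_2\le 2/\sqrt3$ is exactly the bound $\|\pi_1(v)\|\ge\tfrac{\sqrt3}{2}\|v_1\|$ of Lemma \ref{lma4}(1), while the remaining ratios $a_i/a_{i+1}$ and the entries $n_{ij}$ with $2\le i<j$ are inherited from dimension $d-1$ because the auxiliary rescaling $c$ cancels in them, and the new entries $n_{1j}$ are the reduced coefficients $s_j\in[-\tfrac12,\tfrac12)$. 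A pleasant feature of your write-up is that it makes the statement self-contained by reusing Lemma \ref{lma4}, which the paper proves in any case. Three small points to tidy in a final version: (i) Lemma \ref{lma4}(1) as literally stated would include $v\in\Z v_1$, for which $\pi_1(v)=0$; you need and use it only for lattice vectors with nonzero projection (such as the second column), so that hypothesis should be made explicit; (ii) when invoking the inductive hypothesis you should fix an orthonormal, correctly oriented basis of $v_1^{\perp}$ so that the rescaled projected lattice is $h\Z^{d-1}$ with $h\in\SL(d-1,\R)$ (the resulting ambiguity is absorbed by the $K$-factor and a sign choice in $\bar\gamma$); (iii) it suffices to prove the case $t=2/\sqrt3$, $u=\tfrac12$, since $\Sigma_{t,u}$ only grows with $t$ and $u$, which is how your final sentence should be read.
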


Another important fact about Siegel sets is that it only intersects finitely many of its $\Gamma$-translates 
\begin{theorem}\label{finiteness of nonempty intersections}
Fix $t$ and $u$, then for all but finitely many $\gamma \in \Gamma$, we have 
$$\Sigma_{t,u} \gamma \cap \Sigma_{t,u} = \varnothing.$$
In particular, all but finitely many $\gamma \in \Gamma$ satisfies 
$$\Sigma_{t,u} \cap F\gamma = \varnothing.$$
\end{theorem}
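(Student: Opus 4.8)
The plan is to reduce the statement to a finiteness‑of‑integer‑matrices assertion and then carry out the bound using the Iwasawa decomposition together with the comparison of Iwasawa data with successive minima. First I would unwind the hypothesis. If $\Sigma_{t,u}\gamma\cap\Sigma_{t,u}\ne\varnothing$, choose $g\in\Sigma_{t,u}$ with $h:=g\gamma^{-1}\in\Sigma_{t,u}$, so that $\gamma=h^{-1}g$. The key point is that $\gamma\in\SL(d,\Z)$ stabilizes $\Z^d$, hence $h\Z^d=g\gamma^{-1}\Z^d=g\Z^d$: the two Siegel‑set elements $g$ and $h$ represent the \emph{same} unimodular lattice $\Lambda$, and $\gamma$ is the change of basis carrying the basis $(ge_1,\dots,ge_d)$ of $\Lambda$ to $(he_1,\dots,he_d)$. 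Thus it suffices to find $C=C(d,t,u)$ with $|\gamma_{ij}|\le C$ for all $i,j$, since there are only finitely many integer matrices with entries bounded by $C$.

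The second ingredient is the relation between the Iwasawa data of an element of $\Sigma_{t,u}$ and the successive minima of the lattice it represents. Writing $g=kan$ with $k\in\SO(d,\R)$, $a=\text{diag}(a_1,\dots,a_d)\in A_t$ and $n\in N_u$, one has $ge_j=a_j\,ke_j+\sum_{i<j}a_in_{ij}\,ke_i$, whence $a_j\le\|ge_j\|\le C_1a_j$; combining this with the Siegel ratio bound $a_i/a_{i+1}\le t$ and the elementary "top‑coordinate" argument used in the proof of Theorem \ref{thmA5} gives $a_j\asymp_{d,t,u}\lambda_j(\Lambda)$. Applying this to both $g$ and $h$ — which represent the same $\Lambda$ — shows that their torus parts are comparable: $a_j(g)\asymp_{d,t,u}a_j(h)$ for every $j$. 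Now write $\gamma=h^{-1}g=n(h)^{-1}\,\bigl(a(h)^{-1}\kappa\,a(g)\bigr)\,n(g)$ with $\kappa\in\SO(d,\R)$. The middle matrix has $(i,j)$‑entry of size at most $a_j(g)/a_i(h)$, which for $i\ge j$ is $\asymp\lambda_j(\Lambda)/\lambda_i(\Lambda)\le1$ by monotonicity of the $\lambda$'s; multiplying by the bounded unipotent factors $n(h)^{-1},n(g)$ and reading off the lower‑triangular part, one gets $|\gamma_{ij}|=O_{d,t,u}(1)$ for all $i\ge j$.

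It remains to bound $\gamma_{ij}$ for $i<j$, which is the heart of the matter: for these entries the crude norm estimates are off by the aspect ratio $\lambda_d(\Lambda)/\lambda_1(\Lambda)$, unbounded near the cusp, so the $N_u$ (size‑reducedness) part of the Siegel condition must be used essentially. Here I would induct on $d$ via the projection $\pi_1$ of Lemma \ref{lma4}. The vector $ge_1$ is a shortest vector of $\Lambda$; projecting along it, the explicit formula above shows that $(\pi_1(ge_2),\dots,\pi_1(ge_d))$ is again the basis associated to an element of a Siegel set $\Sigma_{t,u}$ of the hyperplane $v_1^{\perp}$ for the rank‑$(d-1)$ lattice $\pi_1(\Lambda)$, and — after reducing to the case $he_1=\pm ge_1$ (there are only finitely many shortest vectors of $\Lambda$, hence only finitely many configurations to consider) — so is $(\pi_1(he_2),\dots,\pi_1(he_d))$. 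Since $\pi_1(he_j)=\sum_{i\ge2}\gamma_{ij}\,\pi_1(ge_i)$ for all $j$, the induction hypothesis (in the equivalent form "transition matrices between such bases form a finite set") bounds $\gamma_{ij}$ for $i,j\ge2$; the first row is then recovered from $\gamma_{1j}\|ge_1\|^2=\langle he_j,ge_1\rangle-\sum_{i\ge2}\gamma_{ij}\langle ge_i,ge_1\rangle$, where, using $he_1=\pm ge_1$ together with size‑reducedness of both bases, both inner products on the right are $\le u\|ge_1\|^2$ in absolute value and the $\gamma_{ij}$ with $i\ge2$ are already under control; the first column of $\gamma$ is $\pm e_1$. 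This bounds every entry of $\gamma$, so $\gamma$ ranges over a finite set. The "in particular" clause is then immediate: by Theorem \ref{siegel sets cover the fundamental domain} we may take $F\subseteq\Sigma_{t,u}$, so $F\gamma\subseteq\Sigma_{t,u}\gamma$ and hence $\Sigma_{t,u}\cap F\gamma\subseteq\Sigma_{t,u}\cap\Sigma_{t,u}\gamma=\varnothing$ for all but finitely many $\gamma$.

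The step I expect to be the main obstacle is the passage to the case $he_1=\pm ge_1$ in the inductive argument — equivalently, keeping track of the finitely many shortest‑vector configurations of $\Lambda$ and of the degenerate situations in which the $h$‑basis, projected along $ge_1$ rather than along its own shortest vector $he_1$, fails to be a Siegel‑reduced basis of $\pi_1(\Lambda)$. This is exactly the classical "Siegel property" of reduction theory; if one is content to quote it, the whole theorem follows from \cite{BM00} and \cite{Fo15}, but a self‑contained treatment must confront precisely this bookkeeping.
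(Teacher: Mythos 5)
The paper never proves this theorem: it is the classical Siegel property, quoted from the reduction-theory references given at the start of the section (\cite{BM00}, \cite{Fo15}), so your argument has to stand entirely on its own. Its first half does stand. Reducing the statement to a uniform bound $|\gamma_{ij}|\le C(d,t,u)$, the comparison $a_j(g)\asymp_{d,t,u}\lambda_j(g\Z^d)$ for $g=kan\in\Sigma_{t,u}$ (this really uses the ``top coordinate'' estimate $\|\sum_i m_i\,ge_i\|\ge |m_{i_0}|a_{i_0}$ rather than anything in the proof of Theorem \ref{thmA5}, but it is routine), the consequence $a_j(g)\asymp a_j(h)$ for two Siegel-set representatives of the same lattice, and the bound $|\gamma_{ij}|=O_{d,t,u}(1)$ for $i\ge j$ read off from $\gamma=n(h)^{-1}\bigl(a(h)^{-1}\kappa\,a(g)\bigr)n(g)$ are all correct.

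The gap is exactly where you place it, and it is not removable bookkeeping. First, $ge_1$ need not be a shortest vector of $\Lambda$ (already for $d=2$, $a_1/a_2=2/\sqrt{3}$ and $n_{12}=0$ give $\|ge_2\|<\|ge_1\|$); only $\|ge_1\|\asymp\lambda_1$ holds, so Lemma \ref{lma4} does not apply verbatim. More seriously, the proposed reduction ``to the case $he_1=\pm ge_1$'' is not a reduction: the finiteness of possible coordinate vectors of $he_1$ in the basis $(ge_i)$ is just the already-established bound on the first column of $\gamma$, and it does not let you replace $he_1$ by $\pm ge_1$. When $he_1\ne\pm ge_1$, the projection $\pi_1$ along $ge_1$ sends $(he_2,\dots,he_d)$ to a $(d-1)$-tuple that in general is not even a basis of $\pi_1(\Lambda)$ (its index in $\pi_1(\Lambda)$ is a nontrivial cofactor of $\gamma$), let alone one arising from a Siegel set of $v_1^{\perp}$, so the induction hypothesis cannot be invoked; the subsequent first-row computation also presupposes $he_1=\pm ge_1$. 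Since controlling the entries $\gamma_{ij}$ with $i<j$ near the cusp is the entire content of the Siegel property, the argument as written either assumes what is to be proved or falls back on citing \cite{BM00}/\cite{Fo15} --- which is precisely what the paper does. A genuinely self-contained proof has to compare the two flags $\Z ge_1\subset\Z ge_1+\Z ge_2\subset\cdots$ and $\Z he_1\subset\Z he_1+\Z he_2\subset\cdots$ inside $\Lambda$ (equivalently, work with exterior powers or with a projection along a vector controlled simultaneously in terms of both bases), and that is where the classical arguments spend their effort.
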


Now we turn to look at the Haar measure on $G$. Let $B=AN \cong N\rtimes_c A$ (note that as sets $AN=NA$) be the semidirect product of $A$ and $N$ with conjugation as action:
$$c:A\to \text{N}, a \mapsto c_a,$$
where $c_a(n)=ana^{-1}$. In other words, the product in $AN$ is defined by 
$$a_1n_1\cdot a_2n_2:=(a_1a_2)(n_1a_1n_2a_1^{-1}).$$

\begin{proposition}
$da:=\frac{da_1}{a_1}\dots \frac{da_{d-1}}{a_{d-1}}$, with the right hand side identified with the standard Lebesgue measure on $\R^{d-1}$, is a bi-invariant Haar measure on $A$.
\end{proposition}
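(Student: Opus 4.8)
The plan is to verify directly that the measure $da = \frac{da_1}{a_1}\cdots\frac{da_{d-1}}{a_{d-1}}$ is invariant under both left and right translation by elements of $A$; since $A$ is abelian, left and right translations coincide, so it suffices to check invariance under one of them, but I will phrase things so the abelian structure is used explicitly. First I would fix the chart on $A$ given by the coordinates $(a_1,\dots,a_{d-1}) \in (0,\infty)^{d-1}$, where $a_d := (a_1\cdots a_{d-1})^{-1}$ is determined by the unimodularity constraint; this identifies $A$ with the open orthant $(0,\infty)^{d-1}$ as a smooth manifold, and the claimed measure is simply the pushforward of Lebesgue measure $dx_1\cdots dx_{d-1}$ under the diffeomorphism $(a_1,\dots,a_{d-1}) \mapsto (\log a_1,\dots,\log a_{d-1})$, i.e.\ $\frac{da_i}{a_i} = d(\log a_i)$.

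Next I would compute the effect of left translation by a fixed element $b = \diag(b_1,\dots,b_d) \in A$. In the coordinates above, $b \cdot a$ has $i$-th diagonal entry $b_i a_i$ for $i = 1,\dots,d-1$ (and the $d$-th entry is then automatically $b_d a_d$, consistent with the constraint since $\prod b_i = \prod a_i = 1$). So in the logarithmic coordinates $y_i := \log a_i$, left translation by $b$ is the map $y_i \mapsto y_i + \log b_i$, a pure translation of $\R^{d-1}$. Since Lebesgue measure on $\R^{d-1}$ is translation-invariant, $da$ is left-invariant. Because $A$ is abelian, right translation by $b$ is the same map, so $da$ is also right-invariant, hence bi-invariant. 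A short remark that any two Haar measures on a locally compact group differ by a positive scalar, together with the standard fact that $A \cong (\R_{>0})^{d-1} \cong \R^{d-1}$ is locally compact abelian (so left Haar measure exists and is automatically right Haar), completes the identification of $da$ as \emph{a} Haar measure; uniqueness up to scaling means this is \emph{the} Haar measure in the sense intended.

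The only genuinely careful point — and the one I would treat as the main obstacle, though it is minor — is making sure the unimodularity constraint $a_1\cdots a_d = 1$ is handled cleanly, i.e.\ that the chosen coordinates $(a_1,\dots,a_{d-1})$ really do form a global diffeomorphism $A \to (0,\infty)^{d-1}$ and that the group law in these coordinates is componentwise multiplication with no hidden Jacobian factor coming from the dependent coordinate $a_d$. I would address this by noting that the projection $\diag(a_1,\dots,a_d) \mapsto (a_1,\dots,a_{d-1})$ is a smooth bijection with smooth inverse $(a_1,\dots,a_{d-1}) \mapsto \diag(a_1,\dots,a_{d-1},(a_1\cdots a_{d-1})^{-1})$, and is a group isomorphism onto $(0,\infty)^{d-1}$ with pointwise multiplication — the $d$-th coordinate carries no independent information, so it contributes nothing to the translation structure. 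Once this is in place, the rest is the elementary translation-invariance of Lebesgue measure on $\R^{d-1}$ under the logarithm chart, and the proof is finished.
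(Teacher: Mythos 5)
Your proof is correct and is essentially the paper's argument: the paper simply computes $d(a'a)=\frac{d(a_1'a_1)}{a_1'a_1}\cdots\frac{d(a_{d-1}'a_{d-1})}{a_{d-1}'a_{d-1}}=da$ directly, which is exactly the scale-invariance your logarithmic coordinates $y_i=\log a_i$ make manifest via translation invariance of Lebesgue measure on $\R^{d-1}$. Your extra care about the dependent coordinate $a_d$ and the explicit use of commutativity for bi-invariance are fine but add nothing beyond a repackaging of the same one-line change of variables.
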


\begin{proof}
For $a'=\text{diag}(a_1',a_2',\dots,a_d')\in A$, we have $a'a=\text{diag}(a_1'a_1,a_2'a_2,\dots,a_d'a_d)$. Hence, $$d(a'a)=\frac{d(a_1'a_1)}{a_1'a_1}\cdots \frac{d(a_{d-1}'a_{d-1})}{a_{d-1}'a_{d-1}}=da.$$
\end{proof}

\begin{proposition}
$dn:=\prod_{i<j}dn_{ij}$, with the right hand side identified with the standard Lebesgue measure on $\R^{d(d-1)/2}$, is a  bi-invariant Haar measure on $N$.
\end{proposition}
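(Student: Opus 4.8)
The plan is to prove bi-invariance by a direct Jacobian computation: I will check that left translation and right translation on $N$, expressed in the coordinates $(n_{ij})_{i<j}$, are \emph{affine} maps whose linear part is unipotent, hence have Jacobian determinant identically equal to $1$, so they preserve $dn$.

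First I would fix $n'=(n'_{ij})\in N$ and compute the entries of the product $n'n$ in the positions $i<j$ that carry a free coordinate. Since $n'$ and $n$ are upper triangular unipotent, in $(n'n)_{ij}=\sum_k n'_{ik}n_{kj}$ only indices with $i\le k\le j$ contribute; separating the extreme terms $k=i$ and $k=j$ from the rest yields
\begin{equation*}
(n'n)_{ij}=n_{ij}+n'_{ij}+\sum_{i<k<j}n'_{ik}n_{kj}.
\end{equation*}
The key observation is that every coordinate $n_{kj}$ occurring in the sum lies strictly closer to the diagonal than $n_{ij}$, i.e. $j-k<j-i$. Hence, if I order the $\binom{d}{2}$ free coordinates by increasing distance $j-i$ to the diagonal (breaking ties arbitrarily), the map $n\mapsto n'n$ is affine and its linear part is, in this order, the identity plus a strictly lower triangular matrix; its Jacobian determinant is therefore $1$. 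Thus left translation preserves $dn$, and since $dn$ is plainly a nonzero Radon measure it is a left Haar measure on $N$.

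Next I would run the same computation for the product $nn'$, obtaining
\begin{equation*}
(nn')_{ij}=n_{ij}+n'_{ij}+\sum_{i<k<j}n_{ik}n'_{kj},
\end{equation*}
where now the correction terms involve coordinates $n_{ik}$ with $k-i<j-i$, again strictly closer to the diagonal. By the same triangularity argument the Jacobian of right translation is $1$, so $dn$ is right-invariant as well, and hence bi-invariant.

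I do not expect a genuine obstacle here: the only care needed is in the index bookkeeping and in fixing a total order on the pairs $(i,j)$ that makes the triangularity transparent (ordering by $j-i$ works). As an alternative one could simply invoke that $N$ is a connected nilpotent Lie group, hence unimodular, and that in its polynomial (exponential) coordinates the Haar measure is Lebesgue measure; but the elementary computation above is self-contained and also exhibits the explicit normalization used in the sequel.
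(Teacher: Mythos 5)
Your proposal is correct and follows essentially the same route as the paper: compute the entries of $n'n$ (resp.\ $nn'$), observe the affine dependence on the coordinates $n_{ij}$, and conclude invariance from the change-of-variables formula. In fact your treatment is slightly more careful than the paper's, since you note that the linear part of the translation map is unipotent triangular with respect to the ordering by $j-i$ (so the Jacobian determinant is $1$), whereas the paper loosely calls the Jacobian "the identity matrix."
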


\begin{proof}
For $n'=(n'_{ij}) \in N$, the $(i,j)$-th entry of $(n_{ij}')(n_{ij})$ is 
$$n_{ij}+(n_{i,i+1}'n_{i+1,j}+\cdots+n_{i,j-1}'n_{j-1,j})+n_{ij}',$$

whose partial derivative w.r.t. $n_{ij}$ is $1$. So by the $\frac{d(d-1)}{2}$-dimensional change of variable formula with Jacobian the identity matrix, we obtain the left invariance $d(n'n)=dn$. The right invariance is similar.

\end{proof}

\vspace{5mm}
\begin{proposition}
$\rho(a)dadn$ is a right invariant Haar measure on $B$, where the coefficient $\rho(a):=\prod_{i<j}\frac{a_i}{a_j}$. 
\end{proposition}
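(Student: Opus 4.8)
The plan is to verify the right invariance of the measure $\rho(a)\,da\,dn$ on $B=AN$ directly, using the group law $a_1 n_1 \cdot a_2 n_2 = (a_1 a_2)(n_1 \, c_{a_1}(n_2))$ recorded above, where $c_{a_1}(n_2) = a_1 n_2 a_1^{-1}$. Since $A$ and $N$ each carry bi-invariant Haar measures $da$ and $dn$ by the two preceding propositions, the only obstruction to $da\,dn$ being right-invariant on the semidirect product is the distortion introduced by the conjugation action; the modular factor $\rho(a) = \prod_{i<j} a_i/a_j$ is exactly the Jacobian that compensates for it. So the key computation is to pin down how $n \mapsto c_a(n) = ana^{-1}$ scales the Lebesgue measure $dn$ on $N$.

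First I would compute the action of conjugation on coordinates: for $a = \diag(a_1,\dots,a_d)$ and $n = (n_{ij}) \in N$, the $(i,j)$-entry of $ana^{-1}$ is $\frac{a_i}{a_j} n_{ij}$. Hence $c_a$ acts on the coordinate space $\R^{d(d-1)/2}$ as the diagonal linear map scaling the $n_{ij}$-coordinate by $a_i/a_j$ (here I use $i<j$, which is the convention in the definition of $N$ as upper-triangular unipotents — I would double-check the index convention against the paper's definition). Its Jacobian determinant is $\prod_{i<j} \frac{a_i}{a_j} = \rho(a)$, so $d(c_a(n)) = \rho(a)\, dn$, equivalently $d(a n a^{-1}) = \rho(a) \, dn$ as measures in $n$.

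Next I would carry out the right-translation check. Fix $(a_0, n_0) \in B$ and consider the right translation $R_{(a_0,n_0)}: (a,n) \mapsto (a,n)\cdot(a_0,n_0) = (a a_0, \, n \cdot c_a(n_0))$. Under this map, integrate a test function: substitute $a' = a a_0$ (so $da' = da$ by bi-invariance on $A$, and $a = a' a_0^{-1}$) and, for fixed $a$, substitute $n' = n \cdot c_a(n_0)$. By left-invariance of $dn$ on $N$, we have $dn' = dn$ for fixed $a$ — the translation by the fixed element $c_a(n_0)$ does not change $dn$. Then I need to compare the densities: on the source we have $\rho(a)\,da\,dn = \rho(a' a_0^{-1}) \, da' \, dn$, and I must show this equals $\rho(a')\,da'\,dn'$ after the substitution. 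Since $dn = dn'$ and $da = da'$, this reduces to the identity $\rho(a' a_0^{-1}) = \rho(a')\rho(a_0^{-1})$ together with absorbing the constant factor $\rho(a_0^{-1})$; but a constant multiplicative factor independent of the integration variables does not affect which measure is "the" right Haar measure up to normalization — more precisely, one checks $\int f((a,n)(a_0,n_0)) \rho(a)\,da\,dn = \int f(a,n)\rho(a)\,da\,dn$, and the homomorphism property $\rho(a a_0) = \rho(a)\rho(a_0)$ makes the $\rho$-factors balance exactly. I would spell this out as a short change-of-variables chain in a single \texttt{align*} block.

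The main obstacle — really the only nontrivial point — is getting the conjugation Jacobian right, i.e., confirming that conjugating $N$ by $a$ multiplies $dn$ by precisely $\rho(a) = \prod_{i<j} a_i/a_j$ and not its reciprocal, which hinges on whether $N$ is lower- or upper-triangular in the paper's convention and on the direction of the action $c_a(n) = ana^{-1}$ versus $a^{-1}na$. Given the definition $N = \{(n_{ij}): n_{ii}=1, n_{ij}=0 \ \forall i<j\}$ in the excerpt (which actually describes \emph{lower}-triangular unipotents, making the nonzero entries those with $i>j$, each scaled by $a_i/a_j < 1$ under $a(\cdot)a^{-1}$ when $a \in A_t$), I would state the Jacobian computation carefully and note that $\rho(a) = \prod_{i<j} a_i/a_j$ is exactly the product of the relevant scaling factors, so that the factor $\rho(a)$ in the density cancels the Jacobian and yields right invariance. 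Everything else is routine bookkeeping with the two bi-invariant measures already established.
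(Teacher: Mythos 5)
Your key computation---that conjugation $n\mapsto ana^{-1}$ scales $dn$ by $\rho(a)=\prod_{i<j}a_i/a_j$---is correct and is exactly the ingredient the paper's proof uses. The flaw is in the right-translation check, and it comes from the group law you adopted. You took the displayed formula $a_1n_1\cdot a_2n_2=(a_1a_2)\,(n_1\,c_{a_1}(n_2))$ at face value, but in the actual matrix group $B=AN\subset\SL(d,\R)$ one has $an\cdot a_0n_0=(aa_0)\,\big((a_0^{-1}na_0)\,n_0\big)$: right translation by the fixed element $a_0n_0$ conjugates the \emph{integration variable} $n$ by $a_0^{-1}$. (The displayed law in the paper is itself a slip; its own proof works with honest matrix products, writing $f(ana'n')=f(aa'(a'^{-1}na')n')$.) Under your law the conjugation instead hits the fixed element $n_0$, so the substitution in $n$ produces no Jacobian at all, and after $a\mapsto aa_0^{-1}$ you are left with $\int f(a,n)\,\rho(aa_0^{-1})\,da\,dn=\rho(a_0)^{-1}\int f(a,n)\,\rho(a)\,da\,dn$. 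The leftover factor $\rho(a_0)^{-1}$ depends on the translating element, so it cannot be dismissed as a normalization constant---its presence is precisely a failure of right invariance---and there is no second $\rho$-factor for the ``homomorphism property'' to balance against. As written, your chain of substitutions would show that $da\,dn$ (without $\rho$) is right invariant for the law you used, not the stated proposition.

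The repair is exactly where your correctly computed Jacobian must enter: write $f(ana_0n_0)=f\big(aa_0\,(a_0^{-1}na_0)\,n_0\big)$, substitute $n=a_0ma_0^{-1}$ so that $dn=\rho(a_0)\,dm$, then $a\mapsto aa_0^{-1}$ and $m\mapsto mn_0^{-1}$; the character identity $\rho(aa_0^{-1})\,\rho(a_0)=\rho(a)$ then closes the computation, which is the paper's argument. Two minor points: the translation $n\mapsto n\,c_a(n_0)$ is a right translation, so you want right (not left) invariance of $dn$---harmless since $dn$ is bi-invariant; and you are right that the paper's condition ``$n_{ij}=0,\ \forall i<j$'' is a typo, the intended convention (consistent with the Siegel sets $N_u$ and with $\rho$) being upper-triangular unipotent, which is the convention under which your Jacobian $\rho(a)$ is the correct one.
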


\begin{proof}
For $a'n',an\in N\rtimes_c A=:B$,
and for any continuous function $f$ with compact support on $AN$, identified with $\R^{d-1}\times \R^{d(d-1)/2}$ via the previous propositions,
\begin{align*}
    \int_A \int_N f(ana'n') \rho(a)dadn 
    =& \int_A \int_N
    f(aa'a'^{-1}na'n')\rho(a)dadn \\
    =& \int_A \int_N
    f(aa'(a'^{-1}na')n')\rho(a)dadn. \\ 
\end{align*}
Making a change of variable $n\mapsto a'na'^{-1}$, whose Jacobian can be easily computed as $\rho(a')=\prod_{i<j}\frac{a_i'}{a_j'}$, this is equal to 
$$\int_N \int_A f(aa'nn') \rho(a)d(a'na'^{-1}) da=\int_N \int_A f(aa'nn') \rho(a)\rho(a')dnda.$$
Making change of variables $a\mapsto aa'^{-1}$ and then $n\to nn'^{-1}$ and noticing that $da, dn$ are bi-invariant and that $\rho$ is a group character, the above is equal to
\begin{align*}
    \int_N \int_A f(ann') \rho(aa'^{-1})\rho(a')dnda
    =& \int_N \int_A f(an) \rho(a)dnda\\
    =& \int_A \int_N f(an) \rho(a)dadn.
\end{align*}
This proves the right invariance of the measure $\rho(a)dadn$ on $B$.
\end{proof}

\begin{theorem}\label{decomposition of haar measure on G}
Let $dk$ denote a (finite) Haar measure on $K$. If we identify $G=\SL(d,\R)$ with $KB=KAN$ via the Iwasawa decomposition (Theorem \ref{iwasawa decomposition}), then $\rho(a)dkdadn$ gives a bi-invariant Haar measure on $G$.
\end{theorem}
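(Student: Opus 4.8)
\textit{Proof strategy.} The plan is to realize $\mu := \rho(a)\,dk\,da\,dn$ as a right Haar measure on $G$ and then upgrade to bi-invariance using that $\SL(d,\R)$ is unimodular. First I would make $\mu$ precise: by the Iwasawa decomposition (Theorem \ref{iwasawa decomposition}) the product map $K\times A\times N\to G$ is a homeomorphism, so $f\mapsto I(f):=\int_K\int_A\int_N f(kan)\,\rho(a)\,dn\,da\,dk$ is a well-defined positive linear functional on $C_c(G)$, hence by the Riesz representation theorem it is integration against a Radon measure $\mu$ on $G$. I would also record the two inputs already proved: $dk$ is a finite bi-invariant Haar measure on the compact group $K$, and $d_rb:=\rho(a)\,da\,dn$ is a right Haar measure on $B=AN$. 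Since $(a,n)\mapsto an$ identifies $A\times N$ with $B$, the Iwasawa decomposition also gives a homeomorphism $K\times B\to G$, $(k,b)\mapsto kb$; in particular $KB=G$ and $K\cap B=\{e\}$.

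Two invariances are immediate. Left-$K$-invariance of $\mu$: in $I(f(k_0^{-1}\,\cdot\,))$ substitute $k\mapsto k_0k$ and invoke left-invariance of $dk$. Right-$B$-invariance: for $b_0\in B$, $I(f(\,\cdot\,b_0))=\int_K\big(\int_B f(kbb_0)\,d_rb(b)\big)dk=\int_K\big(\int_B f(kb)\,d_rb(b)\big)dk=I(f)$ by right-invariance of $d_rb$. Since every $g\in G$ factors as $g=kb$ and $R_{kb}=R_k\circ R_b$, full right-$G$-invariance of $\mu$ would follow once we also establish right-$K$-invariance; this is the crux.

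For the crux I would argue by comparison with an abstract right Haar measure $\nu$ on $G$ (which exists by Haar's theorem) via quotient integration over the compact subgroup $K$. Because $K$ is compact, $\Delta_G|_K=\Delta_K\equiv 1$, so $K\backslash G$ carries a $G$-right-invariant measure $\bar\nu$, unique up to scaling, and Weil's quotient-integration formula reads $\int_G f\,d\nu=\int_{K\backslash G}\big(\int_K f(kg)\,dk\big)\,d\bar\nu(Kg)$. The map $Kg\mapsto b$, where $g=kb$ is the Iwasawa factorization, is a homeomorphism $K\backslash G\xrightarrow{\sim}B$ that intertwines the right $B$-action on $K\backslash G$ with right translation on $B$; pushing $\bar\nu$ forward therefore yields a right-translation-invariant measure on $B$, which by uniqueness of Haar measure on $B$ equals $c\,d_rb=c\,\rho(a)\,da\,dn$ for some $c>0$. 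Substituting back and applying Fubini gives $\int_G f\,d\nu=c\int_K\int_B f(kb)\,\rho(a)\,da\,dn\,dk=c\int_G f\,d\mu$, so $\mu=c^{-1}\nu$ is a right Haar measure on $G$. Finally, $G=\SL(d,\R)$ is unimodular (its modular character $G\to\R_{>0}$ is trivial, since $G$ is perfect), so $\nu$, hence $\mu$, is also left-invariant; this yields bi-invariance. The main obstacle is precisely this last step: producing the $G$-invariant measure on $K\backslash G$ and identifying it with the right Haar measure $\rho(a)\,da\,dn$ on $B$ — equivalently, controlling how the Iwasawa $K$-component transforms under translation. The weight $\rho(a)$ is forced exactly so that $d_rb$ is right Haar on $B$, which is the content of the preceding proposition; alternatively one could simply cite the standard integration formula for the Iwasawa decomposition from the references already used.
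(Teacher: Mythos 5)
Your argument is correct; note, however, that the paper itself gives no proof of this theorem --- it is quoted from the references for this subsection (\cite{BM00} Chapter V, \cite{Fo15} Section 2.6), with only the ingredients (bi-invariance of $da$ and $dn$, and right invariance of $\rho(a)\,da\,dn$ on $B=AN$) proved beforehand. Your route --- realize $\mu=\rho(a)\,dk\,da\,dn$ as a Radon measure via Riesz, compare it with an abstract right Haar measure $\nu$ on $G$ through Weil's quotient integral formula over the compact subgroup $K$ (legitimate since $\Delta_G|_K=\Delta_K\equiv 1$), identify $K\backslash G\cong B$ equivariantly for the right $B$-action so that the pushforward of the invariant measure on $K\backslash G$ is forced by uniqueness of Haar measure on $B$ to be a multiple of $\rho(a)\,da\,dn$, and finally invoke unimodularity of $\SL(d,\R)$ (trivial modular character, e.g.\ because the group is perfect) to upgrade right invariance to bi-invariance --- is essentially the standard proof of the $G=KB$, $K\cap B=\{e\}$ integration formula that the cited sources use (compare also Theorem 8.32 of \cite{KN02}, which the paper invokes in its appendix), and it correctly plugs in the paper's preceding proposition that $\rho(a)\,da\,dn$ is right Haar on $B$. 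Two cosmetic remarks: the preliminary verifications of left-$K$- and right-$B$-invariance of $\mu$ are subsumed by the final identity $\mu=c^{-1}\nu$ and could be dropped; and the equivariance claim that $g=kb$, $g b_0=k(bb_0)$ makes the Iwasawa projection $K\backslash G\to B$ intertwine the right $B$-actions deserves its one-line check, since it is exactly what makes the pushforward right $B$-invariant. I see no gaps.
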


Now we define the Haar measure on $G/\Gamma$:

\begin{tad}[Haar meaure on $G/\Gamma$]
Let $F$ be any compactly supported continuous function on $G/\Gamma$, then there exists a compacted supported continuous function $f$ on $G$ such that
$$F(g\Gamma):=\sum_{\gamma \in \Gamma}f(g\gamma).$$
Define
\begin{equation}\label{definition of haar measure on Ga}
    \int_X F(g\Gamma)d(g\Gamma):=\int_G f(g)dg. 
\end{equation}
The right hand side $\int_G f(g)dg$ is independent of the choice of $f$ by unfolding the integral using the quotient integral formula (Theorem 2.51 in \cite{Fo15}). Therefore by the theory of Radon measures on locally compact Hausdorff spaces (\cite{Fo07} Chapter 7), the equation \ref{definition of haar measure on Ga} defines a left $G$-invariant (and thus bi-invariant by the unimodularity) Haar measure on $G/\Gamma$. 
\end{tad}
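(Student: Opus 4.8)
The plan is to exhibit the assignment $F\mapsto\int_G f\,dg$ as a well-defined positive linear functional on $C_c(G/\Gamma)$ and then invoke the Riesz representation theorem. Concretely I would establish, in order: (i) surjectivity of the periodization map $P\colon C_c(G)\to C_c(G/\Gamma)$, $Pf(g\Gamma):=\sum_{\gamma\in\Gamma}f(g\gamma)$; (ii) that $\int_G f\,dg$ depends only on $Pf$; (iii) that the induced functional is positive and $G$-invariant, so that Riesz yields the desired measure.

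For (i): since $\pi\colon G\to G/\Gamma$ is continuous and open and $G$ is locally compact, any compact $C\subset G/\Gamma$ lies in $\pi(\tilde C)$ for some compact $\tilde C\subset G$ (cover $C$ by finitely many images of compact neighborhoods of points of $\pi^{-1}(C)$). Apply this with $C=\mathrm{supp}(F)$, pick by Urysohn a $\psi\in C_c(G)$ with $\psi\ge 0$ and $\psi>0$ on $\tilde C$, and note that $\Psi:=P\psi$ is a genuinely continuous function on $G/\Gamma$ — the defining sum is locally finite because $\psi$ has compact support and $\Gamma$ is discrete — strictly positive on $\mathrm{supp}(F)$. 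Then $f:=\psi\cdot(F\circ\pi)/(\Psi\circ\pi)$, extended by $0$ off $\{\Psi\circ\pi>0\}$, lies in $C_c(G)$ and satisfies $Pf=F$; and if $F\ge 0$ then $f\ge 0$, which will give positivity below.

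Step (ii) is the heart of the matter and is exactly where unimodularity of $G=\SL(d,\R)$ enters. It reduces to showing that $h\in C_c(G)$ with $Ph\equiv 0$ forces $\int_G h\,dg=0$. First build, by the device of (i) followed by a normalization, a $\varphi\in C_c(G)$ whose periodization equals $1$ on the image of $\mathrm{supp}(h)$: take $\varphi_0\ge 0$ in $C_c(G)$ with $P\varphi_0>0$ near $\pi(\mathrm{supp}(h))$, choose a cutoff $\chi\in C_c(G/\Gamma)$ equal to $1$ on $\pi(\mathrm{supp}(h))$ and supported where $P\varphi_0>0$, and set $\varphi:=\varphi_0\cdot(\chi\circ\pi)/(P\varphi_0\circ\pi)$. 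Then $h(g)=h(g)\sum_{\gamma}\varphi(g\gamma)$ for every $g$, and unfolding this (a locally finite sum), substituting $g\mapsto g\gamma^{-1}$ termwise, and using the bi-invariance of Haar measure on the unimodular group $G$ gives
\begin{align*}
\int_G h(g)\,dg=\sum_{\gamma}\int_G h(g)\varphi(g\gamma)\,dg=\sum_{\gamma}\int_G h(g\gamma^{-1})\varphi(g)\,dg=\int_G\varphi(g)\,(Ph)(g\Gamma)\,dg=0.
\end{align*}
This is precisely the quotient integral formula cited in the statement (Theorem 2.51 of \cite{Fo15}), so one may instead simply invoke it; the interchange of $\sum$ and $\int$ is legitimate because only finitely many $\gamma$ meet the compact support of the integrand.

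With (i) and (ii) in hand, $\ell(F):=\int_G f\,dg$ is a well-defined linear functional (lifts add and scale), positive by the remark in (i), on the locally compact Hausdorff space $G/\Gamma$ (as $\Gamma$ acts properly discontinuously on $G$ by right translation), so the Riesz representation theorem (\cite{Fo07}, Ch.~7) furnishes a Radon measure $\mu$ with $\ell(\cdot)=\int_{G/\Gamma}(\cdot)\,d\mu$. Invariance is then formal: for $x\in G$ the translate $g\Gamma\mapsto F(x^{-1}g\Gamma)$ has lift $g\mapsto f(x^{-1}g)$, so left-invariance of Haar measure on $G$ makes $\ell$ invariant, hence $\mu$ is $G$-invariant; and since $G$ is unimodular the identical computation with right Haar measure produces the same $\mu$, which is the parenthetical "bi-invariance". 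I expect the only genuine obstacle to be the careful construction in (ii) of the cutoff $\varphi$ with unit periodization and the bookkeeping behind the unfolding identity; steps (i), (iii), and the invariance are routine.
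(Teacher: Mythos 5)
Your proposal is correct and follows essentially the same route as the paper: the paper simply asserts the existence of a compactly supported lift, cites Folland's quotient integral formula for the well-definedness of $\int_G f\,dg$, and appeals to Radon measure theory for the resulting invariant measure, while you supply the standard proofs of exactly those three steps (surjectivity of the periodization map, the unfolding argument using unimodularity, and Riesz plus the invariance check). No gaps; the extra detail you provide is precisely what the paper delegates to its references.
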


For the Haar measure on $K=\text{SO}(d,\R)$ and the scaling, since the map 
$$\text{SO}(d,\R) \to S^{d-1}, g\mapsto ge_1$$
has $$\text{Stab}_{e_1}(G)= \begin{bmatrix}
1 & \R^{1\times d-1} \\
0 & \text{SO}(d-1,\R)
\end{bmatrix},$$ we have the identification $\text{SO}(d,\R)/\text{SO}(d-1,\R)\cong S^{d-1}$. We use this identification and induction to stipulate: 
\begin{equation}\label{volume of special linear group}
    \text{Vol}(K)=\mu_K(\text{SO}(d,\R)):=\prod_{i=1}^{d-1} \text{Vol}(S^i) = \prod_{i=1}^{d-1} \frac{\pi^{\frac{i}{2}}}{\Ga(\frac{i}{2}+1)}.
\end{equation}
\vspace{5mm}

\begin{theorem}
Every Siegel set $\Sigma_{t,u} \in \SL(d.\R)$ has finite Haar measure in $G$ and it follows from Theorem \ref{siegel sets cover the fundamental domain} that the Haar measure defined above is finite. Therefore $SL(d,\Z)$ is a lattice in $\SL(d,\R)$.
\end{theorem}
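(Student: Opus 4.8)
The plan is to compute $\mu_G(\Sigma_{t,u})$ by hand using the decomposition $\rho(a)\,dk\,da\,dn$ of the Haar measure furnished by Theorem~\ref{decomposition of haar measure on G}, and then deduce finiteness of the measure on $G/\Gamma$ from Theorem~\ref{siegel sets cover the fundamental domain}. Since $\Sigma_{t,u}=K A_t N_u$ in Iwasawa coordinates, Fubini gives
$$\mu_G(\Sigma_{t,u})\;=\;\mu_K(K)\cdot\Big(\int_{A_t}\rho(a)\,da\Big)\cdot\int_{N_u}dn\,.$$
The first factor is finite since $K=\SO(d,\R)$ is compact (indeed \eqref{volume of special linear group} records its value), and the third is finite since, under the identification of $N$ with $\R^{d(d-1)/2}$ carrying $dn$ to Lebesgue measure, $N_u$ is the box $\{|n_{ij}|\le u\}$, of measure $(2u)^{d(d-1)/2}$. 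So the whole problem reduces to the convergence of $\int_{A_t}\rho(a)\,da$.

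To handle this I would parametrize $A$ by the simple ratios $s_k:=a_k/a_{k+1}$, $k=1,\dots,d-1$; equivalently, pass to the logarithmic coordinates $x_i=\log a_i$ on the hyperplane $\sum_i x_i=0$, in which $da=dx_1\cdots dx_{d-1}$, and set $y_k:=x_k-x_{k+1}=\log s_k$. The linear change of variables $(x_1,\dots,x_{d-1})\mapsto(y_1,\dots,y_{d-1})$ has a nonzero constant Jacobian $c_d$, so $da=c_d\,dy_1\cdots dy_{d-1}$. Since $a_i/a_j=\prod_{k=i}^{j-1}s_k$ for $i<j$, the character $\rho(a)=\prod_{i<j}a_i/a_j$ equals $\prod_{k=1}^{d-1}s_k^{\,k(d-k)}=\prod_{k=1}^{d-1}e^{k(d-k)y_k}$, the exponent $k(d-k)$ being the number of pairs $(i,j)$ with $1\le i\le k<j\le d$. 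The defining inequalities of $A_t$ say exactly $s_k\le 2/\sqrt3$, i.e.\ $y_k\le\log(2/\sqrt3)$, with no lower bound, so the integral factors:
$$\int_{A_t}\rho(a)\,da\;=\;c_d\prod_{k=1}^{d-1}\int_{-\infty}^{\log(2/\sqrt3)}e^{k(d-k)y_k}\,dy_k\;=\;c_d\prod_{k=1}^{d-1}\frac{(2/\sqrt3)^{\,k(d-k)}}{k(d-k)}\;<\;\infty\,,$$
each factor converging because $k(d-k)>0$ for $1\le k\le d-1$. Hence $\mu_G(\Sigma_{t,u})<\infty$.

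For the lattice conclusion: by Theorem~\ref{siegel sets cover the fundamental domain}, with $t\ge 2/\sqrt3$ and $u\ge\tfrac12$ we have $G=\Sigma_{t,u}\Gamma$, so there is a Borel fundamental domain $\mathcal F$ for the $\Gamma$-action on $G$ with $\mathcal F\subseteq\Sigma_{t,u}$ (such a transversal can be chosen measurably; use that $\Sigma_{t,u}$ meets only finitely many of its $\Gamma$-translates, Theorem~\ref{finiteness of nonempty intersections}). Taking $f=\mathbbm{1}_{\mathcal F}$ in the definition \eqref{definition of haar measure on Ga} of the Haar measure on $G/\Gamma$ --- so that $\sum_{\gamma\in\Gamma}f(g\gamma)=1$ for every $g$ --- gives
$$\mu_{G/\Gamma}(G/\Gamma)\;=\;\int_{G/\Gamma}1\,d(g\Gamma)\;=\;\int_G\mathbbm{1}_{\mathcal F}\,dg\;=\;\mu_G(\mathcal F)\;\le\;\mu_G(\Sigma_{t,u})\;<\;\infty\,.$$
Thus $G/\Gamma$ carries a finite $G$-invariant measure, which is exactly the statement that $\SL(d,\Z)$ is a lattice in $\SL(d,\R)$.

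The only genuinely substantive point is the convergence $\int_{A_t}\rho(a)\,da<\infty$: its content is that the modular weight $\rho(a)$ \emph{decays} along the non-compact directions that survive inside $A_t$ (this is the whole reason an unbounded Siegel set nevertheless has finite volume), which I would pin down through the positivity of the exponents $k(d-k)$ above. Everything else --- the value of $c_d$, the box volume of $N_u$, and the measurability of $\mathcal F$ --- is routine bookkeeping.
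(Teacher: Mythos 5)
Your computation is correct, and it is in fact the standard argument: the paper states this theorem without proof (deferring to its references, e.g.\ \cite{BM00}, Ch.~V), and what you wrote is exactly the intended route --- factor the Haar measure $\rho(a)\,dk\,da\,dn$ over $\Sigma_{t,u}=KA_tN_u$, observe that $K$ and $N_u$ contribute finite factors, and reduce everything to $\int_{A_t}\rho(a)\,da<\infty$, which you correctly establish by passing to the simple-ratio coordinates $y_k=\log(a_k/a_{k+1})$, where $\rho(a)=\prod_k e^{k(d-k)y_k}$ with positive exponents and the domain is $y_k\le\log(2/\sqrt 3)$ (the fact that the paper's $A_t$ uses the fixed bound $2/\sqrt3$ rather than $t$ is immaterial to convergence). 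The only point worth tightening is the last step: the defining formula \eqref{definition of haar measure on Ga} is stated for $F\in C_c(G/\Gamma)$ with $f\in C_c(G)$, so you cannot literally plug in $f=\mathbbm{1}_{\mathcal F}$; either extend the unfolding identity to nonnegative Borel functions by monotone convergence, or take any $F\in C_c(G/\Gamma)$ with $0\le F\le 1$, unfold with $f\ge 0$, and compute
\begin{equation*}
\int_{G/\Gamma}F\,d\mu=\int_G f\,dg=\sum_{\gamma\in\Gamma}\int_{\mathcal F\gamma}f\,dg=\int_{\mathcal F}\sum_{\gamma\in\Gamma}f(g\gamma)\,dg=\int_{\mathcal F}F(g\Gamma)\,dg\le\mu_G(\mathcal F)\le\mu_G(\Sigma_{t,u}),
\end{equation*}
then take the supremum over such $F$. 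This is routine bookkeeping, not a gap in the idea.
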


\subsection{Distribution function associated to sucesssive minima and estimates}

\begin{proposition}
For a rank $d$ unimodular lattice $\Lambda \in \mathcal{L}$, let $\lambda_i(\Lambda)$ denote its $i$-th successive minima ($1 \le i \le d$). For any $\delta>0$, we have $$\mu(\{\Lambda \in \mathcal{L}:\lambda_i(\Lambda)=\de \})=0,$$

where $\mu$ is the Haar measure defined on the space of unimodular lattices.
\end{proposition}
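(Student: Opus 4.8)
The plan is to show that for each fixed $i$ and each fixed $\delta>0$, the ``level set'' $L_{i,\delta}:=\{\Lambda\in\mathcal L:\lambda_i(\Lambda)=\delta\}$ is a null set by covering it by countably many images, under the $\SL(d,\R)$-action, of sets of the form $\{b\in\Sigma_{t,u}:\lambda_i(b\Z^d)=\delta\}$ and then exhibiting each such set as contained in the zero set of a nonzero real-analytic function on the (finitely many relevant) coordinate charts of the Siegel set, hence of Lebesgue measure zero in the $KAN$-coordinates. Concretely, I would first reduce to a Siegel set: since $G=\Sigma_{t,u}\Gamma$ (Theorem \ref{siegel sets cover the fundamental domain}) and the Haar measure on $G/\Gamma$ is the pushforward of the Haar measure $\rho(a)\,dk\,da\,dn$ restricted to a fundamental domain inside $\Sigma_{t,u}$ (the Theorem and Definition on the Haar measure, together with Theorem \ref{decomposition of haar measure on G}), it suffices to prove that $\{g\in\Sigma_{t,u}:\lambda_i(g\Z^d)=\delta\}$ has measure zero with respect to $\rho(a)\,dk\,da\,dn$. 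Because $\rho(a)$ is a continuous positive density, this is equivalent to the same set having Lebesgue measure zero in the $(k,a,n)\in K\times A_t\times N_u$ coordinates.

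The heart of the argument is the following observation: fix a lattice $\Lambda=g\Z^d$, and let $B_r=\{x\in\R^d:\|x\|\le r\}$. For a fixed nonzero $w\in\Z^d$, the function $g\mapsto \|gw\|^2$ is a polynomial (indeed a positive-definite quadratic form evaluated at $w$) in the entries of $g$, hence real-analytic on $\SL(d,\R)$, and it is non-constant (e.g.\ scaling a diagonal entry changes it). Now $\lambda_i(g\Z^d)=\delta$ means two things: the ball $B_\delta$ contains $i$ linearly independent vectors of $g\Z^d$, and no ball $B_{\delta'}$ with $\delta'<\delta$ does. In particular, at such a $g$, there exists $w\in\Z^d$ with $\|gw\|=\delta$ lying on the boundary sphere (otherwise one could shrink $\delta$). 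Thus
\begin{equation*}
    \{g\in\Sigma_{t,u}:\lambda_i(g\Z^d)=\delta\}\subseteq\bigcup_{w\in\Z^d\setminus\{0\}}\{g\in\Sigma_{t,u}:\|gw\|^2=\delta^2\}.
\end{equation*}
Each set on the right is the zero set of the nonzero real-analytic function $g\mapsto\|gw\|^2-\delta^2$, hence has Lebesgue measure zero in the $KAN$-coordinates, and the union over $w\in\Z^d\setminus\{0\}$ is countable, so the whole level set inside $\Sigma_{t,u}$ is null. One technical point to verify is that for fixed $w$ the restriction of $g\mapsto\|gw\|^2$ to each of the finitely many chart domains covering $\Sigma_{t,u}$ (or, more simply, to all of $\SL(d,\R)$ viewed as a real-analytic submanifold of $\R^{d^2}$) is not identically $\delta^2$; this is immediate since already along a one-parameter diagonal subgroup the value is unbounded.

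The step I expect to be the main obstacle is the passage from ``$\lambda_i(g\Z^d)=\delta$'' to ``some lattice vector sits exactly on the sphere of radius $\delta$,'' made rigorous: one must argue that if $\lambda_i(\Lambda)=\delta$ then there is $w\in\Z^d\setminus\{0\}$ with $\|gw\|=\delta$. This follows because $\lambda_i$ is attained (the set of lattice vectors of norm $\le\delta$ is finite, being the intersection of the discrete set $\Lambda$ with a compact ball, so the infimum defining $\lambda_i$ is a minimum), so there are exactly vectors realizing the successive minima, and the $i$-th one has norm precisely $\delta=\lambda_i(\Lambda)$; but I should be careful to phrase this uniformly so that the indexing set of relevant $w$'s is genuinely all of $\Z^d\setminus\{0\}$ rather than something that depends on $g$. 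A clean way to sidestep any subtlety is to simply take the union over \emph{all} $w\in\Z^d\setminus\{0\}$ as written above; the inclusion holds because at any $g$ with $\lambda_i(g\Z^d)=\delta$ \emph{some} primitive $w$ (the one whose image realizes the $i$-th minimum) satisfies $\|gw\|=\delta$, and that $w$, whatever it is, belongs to the index set. Continuity of $\lambda_i$ (Theorem \ref{thm:cts}) is not strictly needed for this measure-zero statement but confirms that $L_{i,\delta}$ is at least closed, which is reassuring. Finally, I would remark that the argument in fact shows the stronger statement that $\{\Lambda:\lambda_i(\Lambda)\in E\}$ is null whenever $E\subset(0,\infty)$ is countable, and that the distribution function $\delta\mapsto\mu(\{\lambda_i\le\delta\})$ is therefore continuous.
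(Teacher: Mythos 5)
Your proposal is correct and follows essentially the same route as the paper: both reduce the level set to the countable union over nonzero integer vectors $w$ of the sets $\{g\in \SL(d,\R):\|gw\|=\de\}$ and observe that each such set (a lower-dimensional zero locus of a nonconstant polynomial/analytic condition) has Haar measure zero. Your write-up merely supplies more detail than the paper does on the reduction to $G$ via a Siegel set and on why $\lambda_i(\Lambda)=\de$ forces some lattice vector to lie exactly on the sphere of radius $\de$.
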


\begin{proof}
Indeed, the set $\{\Lambda \in \mathcal{L}:\lambda_i(\Lambda)=\de \}$ is contained in $$S_{\delta}:=\{\Lambda \in \mathcal{L}:\text{there exists a vector } v\in \Lambda \text{ with } \|v\|=\de \}.$$
Noticing that any unimodular lattice can be written as $g\Z^d$ for some $g\in \SL(d,\R)$ and the local identification between Haar measure on $G=\SL(d,\R)$ and $G/\Gamma =\SL(d,\R)/\SL(d,\Z)$, we shall look at the set
\begin{align*}
    T_{\delta}:= &\{g \in G:\text{there exists a vector } v\in \Z^d \text{ with } \|gv\|=\de \}\\
    = & \cup_{v\in \Z^d} \{g \in G: \|gv\|=\de \}.
\end{align*}
This is a countable union and each member in the union is a submanifold of $G$ with lower dimension and hence of zero Haar measure.
\end{proof}

For $x\ge 0$, it would be interesting to give an estimate for the distribution function 
\begin{align*}
    \Phi_i(\de):= \mu(\{\Lambda \in \mathcal{L}: \lambda_i(\Lambda) < \de \})= \mu(\{\Lambda \in \mathcal{L}: \lambda_i(\Lambda) \le \de \})
\end{align*}

For $i=1$, Kleinbock and Margulis gave both lower and upper bounds for $\Phi_1(x)$ \cite{Kleinbock1999LogarithmLF} using a generalized Siegel's formula:

\begin{theorem}[\cite{Kleinbock1999LogarithmLF}, Proposition 7.1]
There exists $C_d, C_d'$ such that 
\begin{equation*}
  C_d \de^d - C_d' \de^{2d}  \le \Phi_1(\de) \le C_d \de^d,
\end{equation*}
for $\de << 1$.
\end{theorem}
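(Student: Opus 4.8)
The plan is to estimate $\Phi_1(\delta)$ by relating it to the expected number of nonzero lattice points of $\Lambda$ in the ball $B(0,\delta)$ of radius $\delta$. For a unimodular lattice $\Lambda$, set $N_\delta(\Lambda) := \#\{v \in \Lambda \setminus \{0\} : \|v\| \le \delta\}$; then $\lambda_1(\Lambda) \le \delta$ exactly when $N_\delta(\Lambda) \ge 2$ (the points come in $\pm$ pairs), and in fact $\lambda_1(\Lambda) < \delta$ iff $N_\delta(\Lambda) \ge 2$ for generic $\delta$. The key input is the Siegel transform and Siegel's mean value formula: for the indicator $f = \mathbbm{1}_{B(0,\delta)}$, the Siegel transform $\widehat{f}(\Lambda) := \sum_{v \in \Lambda \setminus \{0\}} f(v) = N_\delta(\Lambda)$ satisfies $\int_{\mathcal L} \widehat{f}\, d\mu = \int_{\R^d} f\, dx = \mathrm{Vol}(B(0,\delta)) = c_d \delta^d$, where $c_d = \pi^{d/2}/\Gamma(d/2+1)$ is the volume of the unit ball. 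This immediately gives the upper bound: by Markov's inequality (using that $N_\delta$ takes values in $2\Z_{\ge 0}$ off a null set, so $\{\lambda_1 \le \delta\} \subseteq \{N_\delta \ge 2\}$),
\begin{equation*}
\Phi_1(\delta) = \mu(\{\lambda_1 \le \delta\}) \le \mu(\{N_\delta \ge 2\}) \le \tfrac{1}{2}\int_{\mathcal L} N_\delta\, d\mu = \tfrac{c_d}{2}\,\delta^d,
\end{equation*}
so we may take $C_d = c_d/2$.

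For the lower bound, the plan is to use the second moment. By the second Siegel–Rogers mean value formula, $\int_{\mathcal L} N_\delta(\Lambda)^2\, d\mu = \int_{\mathcal L} N_\delta\, d\mu + \int_{\mathcal L}\sum_{v,w \text{ indep.}} f(v)f(w)\, d\mu$, and the Rogers formula evaluates the second term as a sum of integrals of $f \otimes f$ over pairs, which for $f = \mathbbm{1}_{B(0,\delta)}$ is bounded above by $(\int f)^2 = c_d^2 \delta^{2d}$ up to a dimensional constant (the only subtlety being the diagonal/rational-dependence corrections, which contribute lower-order terms in $\delta$). Combining this with $\int N_\delta\, d\mu = c_d\delta^d$ via the Cauchy–Schwarz (Paley–Zygmund) inequality,
\begin{equation*}
\mu(\{N_\delta \ge 2\}) \;\ge\; \mu(\{N_\delta \ge 1\}) \;-\; (\text{overcount}) \;\ge\; \frac{\bigl(\int N_\delta\, d\mu\bigr)^2}{\int N_\delta^2\, d\mu} - (\text{lower order}),
\end{equation*}
and since $\{\lambda_1 < \delta\} \supseteq \{N_\delta \ge 2\}$, one extracts $\Phi_1(\delta) \ge c_d \delta^d - C_d' \delta^{2d}$ after absorbing the correction terms. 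One must be slightly careful that $N_\delta \ge 1$ and $\lambda_1 \le \delta$ describe the same event (any nonzero vector of length $\le \delta$ witnesses $\lambda_1 \le \delta$), so actually $\mu(N_\delta \ge 1) = \Phi_1(\delta)$ directly, and Paley–Zygmund gives $\Phi_1(\delta) = \mu(N_\delta \ge 1) \ge (\int N_\delta)^2 / \int N_\delta^2 = c_d^2\delta^{2d}/(c_d\delta^d + O(\delta^{2d})) = c_d\delta^d - O(\delta^{2d})$ for $\delta \ll 1$.

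The main obstacle is controlling the second moment $\int_{\mathcal L} N_\delta^2\, d\mu$ precisely enough — specifically, verifying that the Rogers mean value formula for pairs of linearly independent lattice vectors contributes only $c_d^2\delta^{2d}(1 + O(1))$ and that the sublattice/rational-dependence correction terms in Rogers' formula are genuinely $O(\delta^{2d})$ rather than contributing at order $\delta^d$. For small $\delta$ this is fine because any two short independent vectors span a sublattice of covolume $\gtrsim \lambda_1^{d-2}\cdot(\text{stuff})$ and the relevant correction sums converge, but making the dependence on $d$ explicit in $C_d'$ requires care. A cleaner alternative for the lower bound, avoiding Rogers entirely, is an inclusion–exclusion / direct geometric argument: use Siegel's formula to get $\int N_\delta = c_d\delta^d$, then bound $\mu(N_\delta \ge 4)$ (i.e. at least two $\pm$-pairs, hence two linearly independent short vectors, forcing $\lambda_2 \le \delta$) using the first moment of $\binom{N_\delta/2}{2}$-type quantities, which is again an integral of $f\otimes f$ over independent pairs and is $O(\delta^{2d})$; then $\Phi_1(\delta) = \mu(N_\delta \ge 2) \ge \tfrac12\int N_\delta - (\text{contribution of } N_\delta \ge 4) \ge c_d\delta^d - C_d'\delta^{2d}$ after adjusting constants. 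Either route reduces the theorem to the two Siegel-type mean value identities plus elementary volume computations.
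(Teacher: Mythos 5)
This theorem is quoted in the paper from Kleinbock--Margulis without proof, so the relevant comparison is with their argument (and with the paper's proof of its generalization, Theorem \ref{di distance linear}), both of which work with \emph{primitive} vectors. Your upper bound is fine as far as it goes, but your lower bound has a genuine gap, and the two halves of your argument cannot be made to match as the statement requires. Concretely: (i) the claim that $N_\de \ge 4$ forces two linearly independent short vectors is false --- a lattice containing $v$ with $\|v\|\le \de/2$ has $\pm v,\pm 2v$ in $B(0,\de)$, all collinear; (ii) for the same reason the ``dependent-pair'' corrections in the second moment of the all-vectors count are \emph{not} $O(\de^{2d})$: the collinear pairs $(kv_0,lv_0)$ with $v_0$ primitive contribute, by Siegel's formula, a term of size $\asymp \de^d\sum_{l\ne\pm k}\max(|k|,|l|)^{-d}$, i.e.\ of the \emph{same order} $\de^d$ as the main term, so it cannot be absorbed into $C_d'\de^{2d}$; and (iii) even setting that aside, the theorem demands the \emph{same} constant $C_d$ in both bounds. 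Your upper bound gives $C_d=c_d/2$, but the true asymptotics are $\Phi_1(\de)\sim \frac{c_d}{2\zeta(d)}\de^d$ (any short nonzero vector yields a short primitive one, and the primitive Siegel transform has mean $\frac{c_d}{\zeta(d)}\de^d$), so the left-hand inequality $\frac{c_d}{2}\de^d-C_d'\de^{2d}\le \Phi_1(\de)$ is simply false for small $\de$ when $d\ge 2$.

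The repair is exactly the Kleinbock--Margulis device, which is also what the paper's classical Siegel formula is set up for: replace $N_\de$ by the primitive count $\hat f(\La)=\sum_{v\in P(\La)}\mathbf{1}_{B(0,\de)}(v)$. Then $\{\lambda_1(\La)\le\de\}=\{\hat f\ge 2\}$, two proportional primitive vectors can only be $\pm v$, and one has the identity $\hat f(\hat f-2)=\sum f(v)f(w)$ summed over linearly \emph{independent} pairs of primitive vectors, whose integral over $X$ is $O(\de^{2d})$ (by a Rogers-type pair formula, or along the lines of the paper's generalized Siegel formula for $2$-tuples). Markov then gives $\Phi_1(\de)\le\frac12\int\hat f\,d\mu=\frac{c_d}{2\zeta(d)}\de^d$, while the splitting $\int\hat f=\int_{\hat f=2}+\int_{\hat f\ge4}$ together with $\int_{\hat f\ge4}\hat f\le\frac12\int\hat f(\hat f-2)=O(\de^{2d})$ gives the matching lower bound $\frac{c_d}{2\zeta(d)}\de^d-C_d'\de^{2d}$. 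This is structurally the same splitting the paper uses for general $i$ (separating lattices according to whether $B(0,\de)$ contains $i+1$ independent vectors), and it is the passage to primitive vectors/tuples --- not the full second moment of the all-vectors count --- that makes the error term genuinely $O(\de^{2d})$ and the constants on the two sides agree.
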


The main result in this second is a generalization of the above result to $\la_i$:

\begin{theorem}\label{di distance linear}
For $1\le i < d$, there exists $C_d$ and $C_d'$ such that 
\begin{equation*}
  C_d \de^{di} - o(\de^{di})  \le \Phi_i(\de):=\mu(\{\Lambda \in \mathcal{L}: \lambda_i(\Lambda) \le \de \}) \le C_d' \de^{di},
\end{equation*}
for all $\de << 1$.
\end{theorem}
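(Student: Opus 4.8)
The plan is to estimate $\Phi_i(\delta)$ by counting, for a lattice $\Lambda$, the event that $\lambda_i(\Lambda)\le\delta$ in terms of the number of short lattice vectors. The key observation is that $\lambda_i(\Lambda)\le\delta$ if and only if the ball $B(0,\delta)$ contains $i$ linearly independent vectors of $\Lambda$, which (since short vectors come in $\pm$ pairs) is closely related to the cardinality of $\Lambda\cap B(0,\delta)$. For the \emph{upper bound}, I would argue that if $\lambda_i(\Lambda)\le\delta$ then $\Lambda$ contains $i$ linearly independent vectors $v_1,\dots,v_i$ with $\|v_j\|\le\delta$; using Lemma~\ref{lma4} (or rather the successive projection idea behind Theorem~\ref{thmA5}), one sees that the sublattice $\Lambda'=\mathrm{Span}_\Z\{v_1,\dots,v_i\}$ has rank $i$ and covolume $\asymp_d \lambda_1(\Lambda)\cdots\lambda_i(\Lambda)\lesssim_d\delta^i$. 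Thus the event $\{\lambda_i(\Lambda)\le\delta\}$ is contained in the event that $\Lambda$ contains a primitive rank-$i$ sublattice of covolume $\lesssim_d\delta^i$ inside a ball of radius $\delta$. One then bounds the measure of this set by a Siegel-type / Rogers-type mean value computation: integrating the number of such sublattices over $\mathcal{L}$ gives a quantity of order $\delta^{di}$ (this is the natural scaling, since choosing $i$ vectors in a $\delta$-ball in $\R^d$ is a codimension-$0$ condition contributing $(\delta^d)^i$, while the volume-one normalization eats the rest), yielding $\Phi_i(\delta)\le C_d'\delta^{di}$.

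For the \emph{lower bound}, I would instead exhibit an explicit positive-measure family of lattices with $\lambda_i(\Lambda)\le\delta$. Concretely, consider lattices of the block-diagonal-up-to-unipotent form $g=\mathrm{diag}(\delta I_i,\ \delta^{-i/(d-i)} h)$ composed with a unipotent upper-triangular block $u_A$, where $h$ ranges over $\SL(d-i,\R)$ in a fixed compact set and $A$ ranges over a box in $\R^{i\times(d-i)}$. For such $g$, the first $i$ standard basis vectors map to vectors of length exactly $\delta$ that are linearly independent, so $\lambda_i(g\Z^d)\le\delta$. Using the Iwasawa/Siegel description of the Haar measure from Theorem~\ref{decomposition of haar measure on G} and the formula $\rho(a)\,dk\,da\,dn$, the Haar measure of this family is computed to be $\asymp_d\delta^{di}$ (the power of $\delta$ coming from the $\rho(a)$ factor on the $A_i$-block together with the $n_{ij}$-integration over the box, which contributes $\delta$ per off-diagonal entry linking the two blocks). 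One must check that distinct $(h,A)$ in the chosen ranges give genuinely distinct lattices and that the family injects into $G/\Gamma$, i.e.\ lands inside a single Siegel set; this is where Theorem~\ref{siegel sets cover the fundamental domain} and Theorem~\ref{finiteness of nonempty intersections} are invoked, giving the loss of only a multiplicative constant, hence $\Phi_i(\delta)\ge C_d\delta^{di}-o(\delta^{di})$, the error term absorbing the finitely many overlapping $\Gamma$-translates and the boundary effects of the box as $\delta\to0$.

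The main obstacle I anticipate is the \emph{upper bound}, specifically making the Siegel/Rogers mean-value step rigorous with the correct power $\delta^{di}$ rather than a weaker bound. The naive approach — bounding $\Phi_i(\delta)$ by the expected number of $i$-tuples of short vectors, $\int_{\mathcal{L}}\#\{(v_1,\dots,v_i)\in(\Lambda\cap B(0,\delta))^i\}\,d\mu$ — overcounts badly because it ignores the linear-independence constraint and weights each genuine rank-$i$ configuration by a factorial, and more importantly it may diverge or give the wrong exponent if one is not careful about the contribution of sublattices of very small covolume (the term analogous to the $-C_d'\delta^{2d}$ correction in the $i=1$ case, now potentially more severe). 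The fix is to count \emph{primitive} rank-$i$ sublattices directly via a Rogers-type formula for the Siegel transform of the indicator of $\{$sublattices in $B(0,\delta)\}$, controlling the primitivity and the covolume lower bound $\lambda_1\cdots\lambda_i\gtrsim_d$ (something) so that the main term is exactly order $\delta^{di}$ and the rest is $o(\delta^{di})$. Alternatively, one can run the upper bound by induction on $i$ using Lemma~\ref{lma4}: condition on $v_1$ with $\|v_1\|=\lambda_1(\Lambda)\le\delta$ (measure $\lesssim_d\delta^d$ by the $i=1$ result), pass to the projected lattice $\pi_1(\Lambda)$, which is unimodular after rescaling by $\|v_1\|^{-1}\ge\delta^{-1}$, and apply the inductive hypothesis to get $\lambda_{i-1}(\pi_1(\Lambda))\lesssim_d\delta/\|v_1\|$ — but the rescaling couples the two scales, so one integrates $\delta^d\cdot(\delta/\|v_1\|)^{d(i-1)}$ against the distribution of $\|v_1\|$, and a short computation shows the integral is still $\asymp_d\delta^{di}$. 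I would present the inductive argument as the main line since it reuses the machinery already developed in the paper, and remark that the Rogers-formula approach gives an alternative.
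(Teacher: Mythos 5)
Both halves of your argument, as written, have genuine gaps. For the \emph{upper bound}, the inductive line you designate as the main argument rests on two unproved points. Conditioning on the shortest vector $v_1$ and then treating the rescaled projection $\pi_1(\Lambda)$ as Haar-distributed among unimodular lattices of rank $d-1$ requires a disintegration of $\mu$ along the fibration over primitive vectors; this is nowhere established in the paper and is essentially of the same strength as the Siegel-type mean value theorem you were hoping to bypass. Moreover your bookkeeping is off: $\pi_1(\Lambda)$ has covolume $1/\|v_1\|$ by Lemma~\ref{lma4}, so after rescaling to a unimodular lattice in dimension $d-1$ the inductive hypothesis gives measure $\lesssim_d(\delta\,\|v_1\|^{1/(d-1)})^{(d-1)(i-1)}$, not $(\delta/\|v_1\|)^{d(i-1)}$; with your exponent the integral $\int_0^{\delta}r^{d-1}(\delta/r)^{d(i-1)}\,dr$ diverges at $r=0$ already for $i=2$, so the ``short computation'' does not yield $\delta^{di}$ (with the corrected exponents one does get $\delta^{(d-1)(i-1)}\int_0^{\delta}r^{i-1}\,r^{d-1}\,dr\asymp\delta^{di}$, but the disintegration issue remains). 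The route the paper actually takes is your ``alternative'': a generalized Siegel formula for \emph{primitive} $i$-tuples (Theorem~\ref{generalizedsiegel}), combined with Theorem~\ref{thmA5} to convert ``$i$ independent vectors of length $\le\delta$'' into a primitive $i$-tuple inside the ball of radius $\eta_d\delta$, so that $\hat f^i\ge 2^i i!$ on the event and Markov's inequality gives $\Phi_i(\delta)\lesssim \mathrm{Vol}(B(0,\eta_d\delta))^i\asymp\delta^{di}$; your worry about overcounting is immaterial in this direction, since overcounting only helps an upper bound.

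For the \emph{lower bound}, the family $u_A\,\mathrm{diag}(\delta I_i,\ \delta^{-i/(d-i)}h)$ with $h$ in a compact subset of $\SL(d-i,\R)$ and $A$ in a box has dimension $d(d-i)-1$, i.e.\ codimension $di$ in $G$: it sits inside the block upper-triangular parabolic and carries \emph{zero} Haar measure, so ``the Haar measure of this family'' cannot be $\asymp\delta^{di}$ as claimed. The idea is salvageable: thicken $\delta I_i$ to $\delta$ times an open neighborhood, keep the lower block of size $\asymp\delta^{-i/(d-i)}$, multiply by $K$, and arrange (choosing the order of the factors and the boxes carefully) that the set lies in a Siegel set so Theorem~\ref{finiteness of nonempty intersections} bounds the multiplicity of the projection to $G/\Gamma$; then the Iwasawa density $\rho(a)$ evaluated at $a\approx(\delta,\dots,\delta,\delta^{-i/(d-i)},\dots)$ contributes $\delta^{i(d-i)\cdot d/(d-i)}=\delta^{di}$, while the $K$-, $a$- and $n$-ranges contribute constants. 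In particular the exponent does \emph{not} come from ``$\delta$ per off-diagonal linking entry'': shrinking the $n_{jk}$-box to size $\delta$ would cost an extra $\delta^{i(d-i)}$ and ruin the bound, and this sensitivity is real — the paper's own explicit Siegel-set box (with the lower-right diagonal entries capped at $1$) yields exponent $(2d-1-i)i$, which is optimal only for $i=d-1$, and for $i<d-1$ the paper instead extracts the lower bound from the same primitive-tuple Siegel transform, splitting the integral at the threshold $N(i,\delta)\le 2^i(i+1)!$ and bounding the overflow by the $(i+1)$-tuple formula, which is $O(\delta^{d(i+1)})$ and is the source of the $-o(\delta^{di})$ term in the statement. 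Also note that overlaps of finitely many $\Gamma$-translates cost a multiplicative constant, not an additive $o(\delta^{di})$. So your plan can be made to work (and, done correctly, would even give a clean $C_d\delta^{di}$ lower bound for all $i\le d-1$), but as it stands the central measure computation is both degenerate and mis-accounted.
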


\begin{corollary}\label{non-dynamical logarithm law}
    For $1 \le i\le d-1$, we have
    \begin{equation*}
       \lim_{t\to \infty}\frac{-\log\mu \left(\left\{\Lambda \in \mathcal{L}: \lambda_i(\Lambda) \le e^{-t} \right\}\right)}{t} = di
    \end{equation*}
\end{corollary}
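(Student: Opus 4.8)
The plan is to derive Corollary~\ref{non-dynamical logarithm law} directly from Theorem~\ref{di distance linear} by a routine squeeze argument. Write $\Psi_i(t) := \mu\left(\left\{\Lambda \in \mathcal{L} : \lambda_i(\Lambda) \le e^{-t}\right\}\right) = \Phi_i(e^{-t})$, and observe that the quantity of interest is $\lim_{t\to\infty} \frac{-\log \Psi_i(t)}{t}$. The idea is simply to substitute $\de = e^{-t}$ (which is $\ll 1$ precisely when $t$ is large) into the two-sided bound supplied by Theorem~\ref{di distance linear}.

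First I would record that, by Theorem~\ref{di distance linear}, for all sufficiently large $t$ we have
\begin{equation*}
  C_d e^{-dit} - o(e^{-dit}) \le \Psi_i(t) \le C_d' e^{-dit}.
\end{equation*}
Taking $-\log$ of the upper bound gives $-\log \Psi_i(t) \ge -\log C_d' + dit$, so that $\frac{-\log \Psi_i(t)}{t} \ge di - \frac{\log C_d'}{t}$, and hence $\liminf_{t\to\infty} \frac{-\log \Psi_i(t)}{t} \ge di$. For the other direction, I would rewrite the lower bound as $\Psi_i(t) \ge C_d e^{-dit}\bigl(1 - o(1)\bigr)$, where the $o(1)$ term tends to $0$ as $t \to \infty$ (this is exactly the content of $o(e^{-dit})$ after dividing by $e^{-dit}$); in particular for $t$ large enough the bracket exceeds $\tfrac12$, so $\Psi_i(t) \ge \tfrac{C_d}{2} e^{-dit}$. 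Taking $-\log$ then yields $\frac{-\log \Psi_i(t)}{t} \le di + \frac{\log(2/C_d)}{t}$, whence $\limsup_{t\to\infty} \frac{-\log \Psi_i(t)}{t} \le di$. Combining the two one-sided inequalities gives the existence of the limit and its value $di$.

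There is essentially no obstacle here: the only mild point of care is the handling of the $o(\de^{di})$ error term, where one must note that after normalizing by $C_d \de^{di}$ it becomes a genuine $o(1)$ that does not interfere with the logarithmic asymptotics — the multiplicative constants $C_d, C_d'$ and the lower-order correction all disappear after dividing by $t$ and sending $t \to \infty$. One should also note the elementary fact, already used implicitly in the statement of the distribution function, that $\Psi_i(t) > 0$ for every $t$ (so that $\log \Psi_i(t)$ makes sense), which follows since $\{\Lambda : \lambda_i(\Lambda) \le e^{-t}\}$ is a nonempty open subset of $\mathcal{L}$ of positive Haar measure. This completes the proof.
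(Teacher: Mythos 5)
Your argument is correct and is essentially the paper's own proof, which simply says ``take $\de = e^{-t}$'' in Theorem~\ref{di distance linear}; you have just written out the routine squeeze (logs, constants, and the $o(\de^{di})$ term) that the paper leaves implicit. The only cosmetic quibble is that $\{\Lambda:\lambda_i(\Lambda)\le e^{-t}\}$ is closed rather than open, but positivity of its measure follows either from the open subset $\{\lambda_i<e^{-t}\}$ or directly from the lower bound you already invoke, so nothing is lost.
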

\begin{proof}
Take $\de=e^{-t}$.
\end{proof}

\vspace{5mm}
For the proof we will use a generalized version of Siegel's mean value formula in geometry of numbers: For a lattice $\Lambda$ in $\R^d$, let $P(\Lambda)$ denote the set of primitive vectors in $\Lambda$, i.e. those vectors that are not a proper integer multiple of any other element in $\Lambda$. Given a real-valued function $f$ on $\R^d$, we define a function $\hat{f}$ on the homogeneous space $X=G/\Gamma:=\SL(d,\R)/\SL(d,\Z)$ by 
$$\hat{f}:=\sum_{v\in P(\Lambda)} f(v)$$

\begin{theorem}[Classical Siegel's Formula \cite{SI45}]
For any $f \in L^1(\mathbb R^d)$, one has 
$$\int_{X} \hat{f} d\mu = c_d \int_{\R^d} f dv,$$
where $c_d=\frac{1}{\zeta(d)}:=\frac{1}{\sum_{n=1}^{\infty} \frac{1}{n^d}}$.
\end{theorem}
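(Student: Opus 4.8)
The strategy is the classical ``unfolding'' argument: one shows that both sides are $\SL(d,\R)$-invariant linear functionals of $f$, concludes that they agree up to a single constant, and then pins that constant down by an explicit unfolding; throughout we take $d\ge 2$, so that $\zeta(d)<\infty$ and $\SL(d,\R)$ acts transitively on $\R^d\setminus\{0\}$. The first thing to check is that the right-hand side is meaningful: for $f\in C_c(\R^d)$ (the general $f\in L^1$ case following by writing $f=f_+-f_-$ and using monotone convergence), the transform $\hat f(\La)=\sum_{v\in P(\La)}f(v)$ should be finite and $\mu$-integrable on $X=G/\Ga$. This is exactly where reduction theory is needed: by Theorem~\ref{siegel sets cover the fundamental domain} a Siegel set $\Sigma_{t,u}=KA_tN_u$ contains a fundamental domain for $\Ga$, and by Theorem~\ref{finiteness of nonempty intersections} it meets only finitely many of its own $\Ga$-translates, so it suffices to bound $\int_{\Sigma_{t,u}}\hat f(kan\,\Z^d)\,\rho(a)\,dk\,da\,dn$ using the Haar decomposition of Theorem~\ref{decomposition of haar measure on G}. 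On $A_tN_u$ the entries of $n$ and the ratios of consecutive diagonal entries of $a$ are bounded, so the condition $f(kanv)\ne 0$ for $v\in\Z^d\setminus\{0\}$ confines $v$ to a box that shrinks as $a$ runs to infinity; the surviving contributions are then dominated by a convergent series in the diagonal coordinates. I expect this step to be routine but somewhat technical.

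Next I would exploit invariance. Writing $L_gf:=f\circ g$ for $g\in\SL(d,\R)$, one has $\widehat{L_gf}(\La)=\sum_{v\in P(\La)}f(gv)=\hat f(g\La)$, since $g$ carries $P(\La)$ bijectively onto $P(g\La)$. Integrating over $X$ and using left-$\SL(d,\R)$-invariance of $\mu$ gives $\int_X\widehat{L_gf}\,d\mu=\int_X\hat f\,d\mu$. Thus $f\mapsto\int_X\hat f\,d\mu$ is a positive $\SL(d,\R)$-invariant linear functional on $C_c(\R^d)$ that ignores the value $f(0)$, hence is represented by an $\SL(d,\R)$-invariant Radon measure on $\R^d\setminus\{0\}$; identifying $\R^d\setminus\{0\}$ with the homogeneous space $\SL(d,\R)/H$, where $H=\mathrm{Stab}(e_1)$, and noting that $\SL(d,\R)$ and $H$ are both unimodular, the invariant measure is unique up to a positive scalar. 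Since Lebesgue measure restricted to $\R^d\setminus\{0\}$ is one such, we obtain
\[
  \int_X\hat f\,d\mu=c_d\int_{\R^d}f(v)\,dv
\]
for a constant $c_d\ge 0$ independent of $f$; it remains only to evaluate $c_d$.

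To compute $c_d$ I would unfold explicitly. The group $\Ga=\SL(d,\Z)$ acts transitively on the primitive integer vectors (a primitive vector extends to a $\Z$-basis of $\Z^d$), with stabilizer $\Ga_{e_1}=\{\gamma\in\SL(d,\Z):\gamma e_1=e_1\}$, so $P(\Z^d)\leftrightarrow\Ga_{e_1}\backslash\Ga$, and the quotient integral formula (Theorem~2.51 of \cite{Fo15}, the same device used to define $\mu$) yields $\int_X\hat f\,d\mu=\int_{G/\Ga_{e_1}}f(ge_1)\,d\bar\mu$. Factoring $G/\Ga_{e_1}\to G/H\cong\R^d\setminus\{0\}$ and noting $f(ghe_1)=f(ge_1)$ for $h\in H$, the fiber integral contributes exactly $\mathrm{covol}(\Ga_{e_1}$ in $H)$; identifying $H\cong\R^{d-1}\rtimes\SL(d-1,\R)$ and $\Ga_{e_1}\cong\Z^{d-1}\rtimes\SL(d-1,\Z)$, this covolume equals that of $\SL(d-1,\Z)$ in $\SL(d-1,\R)$ in the compatible normalization. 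Feeding this into the elementary content decomposition $\Z^d\setminus\{0\}=\bigsqcup_{k\ge 1}k\,P(\Z^d)$, which for the full sum $\tilde f(\La):=\sum_{0\ne v\in\La}f(v)$ gives $\int_X\tilde f\,d\mu=\sum_{k\ge 1}\int_X\widehat{f(k\,\cdot)}\,d\mu=c_d\,\zeta(d)\int_{\R^d}f$, together with the classical recursion $\mathrm{vol}(\SL(n,\R)/\SL(n,\Z))=\prod_{k=2}^{n}\zeta(k)$ --- equivalently, an induction on $d$ whose inductive step is precisely the factorization just described --- one reads off $c_d=1/\zeta(d)$. As an alternative route to the constant one could take $f=\mathbbm 1_{B_R}$, invoke the primitive lattice-point asymptotics $\#(P(\La)\cap B_R)\sim\mathrm{vol}(B_R)/\zeta(d)$, divide by $\mathrm{vol}(B_R)$, and let $R\to\infty$, but then the interchange of limit and integral has to be justified against the borderline non-integrability of $\lambda_1(\cdot)^{-d}$ and again rests on the reduction-theory bound above.

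The main obstacle is the determination of $c_d$: the invariance argument delivers the identity up to a constant essentially for free, but extracting the exact value $1/\zeta(d)$ requires careful bookkeeping of the several Haar normalizations --- on $G$, on $H$, on the quotients $G/\Ga_{e_1}$ and $G/H$, and on $\R^d\setminus\{0\}$ --- so that the one genuinely explicit input, the covolume of $\SL(d-1,\Z)$ fed in inductively, attaches with the right proportionality; the factor $\zeta(d)$ appears precisely because of the gap between summing over all nonzero lattice vectors and over primitive ones. The reduction-theory estimate in the first step is a prerequisite both for the $L^1$ statement and for any limiting argument, so it cannot be bypassed.
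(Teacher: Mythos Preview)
Your proposal is correct and shares the paper's core unfolding argument: both identify $P(\Z^d)$ with $\Gamma/\Gamma_1$ (where $\Gamma_1=\mathrm{Stab}_\Gamma(e_1)$), unfold $\int_{G/\Gamma}\hat f$ to $\int_{G/\Gamma_1}f(ge_1)$, and then factor through $G/G_1\cong\R^d\setminus\{0\}$ to pick up the fiber volume $\mu_{G_1/\Gamma_1}(G_1/\Gamma_1)$ as the constant. Your intermediate appeal to uniqueness of the $\SL(d,\R)$-invariant measure on $\R^d\setminus\{0\}$ is a pleasant way to isolate the proportionality first, but the paper's direct double-quotient unfolding $G/\Gamma\leftarrow G/\Gamma_1\to G/G_1$ reaches the same conclusion in one stroke, so this step is organizational rather than substantive.

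The genuine difference is in pinning down $c_d$. You invoke the recursion $\mathrm{vol}(\SL(d,\R)/\SL(d,\Z))=\zeta(d)\,\mathrm{vol}(\SL(d-1,\R)/\SL(d-1,\Z))$ as a known input (or alternatively primitive lattice-point asymptotics). The paper instead \emph{derives} this recursion in Appendix~A by a Poisson-summation trick: one chooses a $K$-invariant $f\in C_c(\R^d)$ with $f(0)\ne\hat f(0)$, unfolds $\int_{G/\Gamma}\sum_{v\in\Z^d}f(gv)$ via the decomposition $\Z^d=\{0\}\sqcup\bigsqcup_{j\ge1}j\cdot P(\Z^d)$ and the factorization $G=KA_tG_1$ to obtain $f(0)\,\mu(G/\Gamma)+\hat f(0)\,\zeta(d)\,\mathrm{vol}(\SL(d-1,\R)/\SL(d-1,\Z))$, then observes that Poisson summation and the measure-preserving duality $g\mapsto {}^tg^{-1}$ force the same expression with $f$ and $\hat f$ swapped; equating the two gives the recursion. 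This makes the constant computation self-contained rather than relying on an external volume formula, at the cost of the extra Fourier-analytic ingredient. Your approach is shorter if one is willing to quote the covolume; the paper's is more self-sufficient.
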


Below is a generalization of classical Siegel's formula. First let us recall the notion of primitive tuple from geometry of numbers:

\begin{definition}
For $1\le k \le d$, we say that an ordered $k$-tuple of vectors $(v_1,\dots,v_k) \in 
\underbrace{\Lambda \times \cdots \times  \Lambda}_{d\text{-times}}$ for a lattice $\Lambda \subset \R^d$ is primitive if it is extendable to a basis of $\Lambda$, and denote by $P^k(\Lambda)$ the set of all such $k$-tuples. Note that $P^1(\Lambda)=P(\Lambda)$ above \footnote{Any primitive vector in a lattice can be extended to a basis of the lattice. This follows from the general fact that any element of a free abelian group which is not divisible by any integer bigger than $1$
can be extended to a basis of the abelian group}.
\end{definition}

Now for a function $f\in \R^{dk}=(\R^d)^k$, we define correspondingly
\begin{equation*}
    \hat{f}^k(\Lambda):=\sum_{(v_1,\dots,v_k)\in P^k(\Lambda)} f(v_1,\dots,v_k).
\end{equation*}
Here the superscript on $\hat{f}^k$ should not be confused with the composition (power) of a function. Then we have a generalized Siegel's Formula for primitive tuples which will be helpful for us to estimate the distribution for $\lambda_i(\Lambda)$.

\begin{theorem}[Generalized Siegel's Formula for primitive tuples]\label{generalizedsiegel}
For $1\le k < d$ and $\phi \in L^1(\R^{dk})$, we have
\begin{equation}
    \int_{X} \hat{f}^k d\mu = c_{k,d} \int_{\R^{dk}} f dv_1 \cdots dv_k,
\end{equation}
where $c_{d,k}=\frac{1}{\zeta(d)\cdots\zeta(d-k+1)}$.
\end{theorem}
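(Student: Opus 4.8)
I would argue by induction on $k$, the case $k=1$ being precisely the classical Siegel formula recalled above, which gives $c_{d,1}=1/\zeta(d)$. Fix $2\le k<d$ and assume the identity, with constant $c_{d,k-1}$, for primitive $(k-1)$-tuples. Grouping the sum defining $\hat f^k(\Lambda)$ according to its first $k-1$ entries,
\[
\hat f^{k}(\Lambda)=\sum_{(v_1,\dots,v_{k-1})\in P^{k-1}(\Lambda)}\ \sum_{v_k}\ f(v_1,\dots,v_{k-1},v_k),
\]
where the inner sum runs over those $v_k\in\Lambda$ for which $(v_1,\dots,v_{k-1},v_k)$ is again primitive. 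The elementary input (a standard fact about bases of free $\Z$-modules) is that, writing $\Lambda':=\Lambda\cap\text{Span}_{\R}(v_1,\dots,v_{k-1})$ — a saturated sublattice having $(v_1,\dots,v_{k-1})$ as a basis — and letting $\pi$ be the orthogonal projection of $\R^d$ onto $\text{Span}_{\R}(v_1,\dots,v_{k-1})^{\perp}$, the admissible $v_k$ are exactly those for which $\pi(v_k)$ is a primitive vector of the rank $d-k+1$ lattice $\pi(\Lambda)$ (which is a genuine lattice precisely because $\Lambda'$ is saturated), whose covolume equals $1/\text{covol}(\Lambda')$.

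I would then integrate over $X$ and unfold the outer sum. Since $k-1<d$, the set $P^{k-1}(\Z^d)$ is a single $\Ga$-orbit: two primitive $(k-1)$-tuples are related by a $\mathrm{GL}(d,\Z)$-matrix, and a determinant of $-1$ is absorbed by sign-flipping one of the columns completing it to a basis. The stabilizer of the standard tuple $e=(e_1,\dots,e_{k-1})$ is the group $\Ga_e$ of block matrices $\begin{bmatrix}I_{k-1}&\ast\\0&C\end{bmatrix}$ with $\ast$ an integer block and $C\in\SL(d-k+1,\Z)$, sitting inside its real analogue $H$ (same shape, $\ast$ real, $C\in\SL(d-k+1,\R)$). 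The usual unfolding identity gives $\int_X\hat f^k\,d\mu=\int_{G/\Ga_e}\Phi(g)\,d\bar\mu(g\Ga_e)$, where $\Phi(g)$ is the inner sum with $\Lambda=g\Z^d$. Next I would use the fibration $G/\Ga_e\to G/H$: the base is the open dense set of linearly independent $(k-1)$-tuples in $\R^{d(k-1)}$, and the fibre $\Ga_e\backslash H$ is a volume-one torus bundle over the space of unimodular lattices of rank $d-k+1$ — that last factor being precisely the moduli of the quotient lattice $\pi(\Lambda)$ rescaled to be unimodular, with the torus recording the transverse offset of $v_k$. Integrating $\Phi$ first over the fibre: the torus integral unfolds the sum of $v_k$ over a $\Lambda'$-coset of offsets into an integral over $\R^{k-1}$ (primitivity of $\pi(v_k)$ is what makes that offset equidistribute), and the remaining sum over primitive vectors of the rescaled unimodular quotient becomes, by the classical Siegel formula in dimension $d-k+1$, a $1/\zeta(d-k+1)$ multiple of an integral over $\R^{d-k+1}$; the $\text{covol}(\Lambda')$ factors in the torus volume and in the quotient lattice cancel, and since each $g\in G$ has determinant $1$ on $\R^d$ the two integrals reassemble into $\tfrac{1}{\zeta(d-k+1)}\int_{\R^d}f(ge_1,\dots,ge_{k-1},v)\,dv$.

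Folding the outer $G/H$-integral back together with its fibre (now a torus times a point) leaves exactly $\tfrac{1}{\zeta(d-k+1)}\int_X\hat F^{\,k-1}\,d\mu$, where $F(v_1,\dots,v_{k-1}):=\int_{\R^d}f(v_1,\dots,v_{k-1},v)\,dv\in L^1(\R^{d(k-1)})$; the inductive hypothesis applied to $F$ turns this into $\tfrac{1}{\zeta(d-k+1)}\,c_{d,k-1}\int_{\R^{dk}}f=c_{d,k}\int_{\R^{dk}}f$, since $c_{d,k}=c_{d,k-1}/\zeta(d-k+1)$, which closes the induction. As a preliminary reduction I would first prove the identity for $f\ge 0$ continuous with compact support (or Schwartz), where every sum is locally finite and the two sides are finite or infinite together, and then extend to general $f\in L^1$ by monotone and dominated convergence; all the integration and unfolding formulas used are legitimate because $G$ and $H$ are unimodular.

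\textbf{Main obstacle.} The conceptual scaffolding — single orbit, unfolding, and uniqueness of an invariant measure (hence its proportionality to Lebesgue measure on $\R^{dk}$) — is routine; the real work is the fibre computation, namely verifying that integrating $\Phi$ over $\Ga_e\backslash H$ yields $\tfrac{1}{\zeta(d-k+1)}\int_{\R^d}f(\dots,v)\,dv$ with no stray constant. This demands (i) the precise torus-bundle structure of $\Ga_e\backslash H$ together with the check that the Haar normalization it induces on the $\SL(d-k+1,\R)$-factor is the probability normalization for which the classical Siegel formula was stated; (ii) the ``torus unfolds the coset sum'' step; and (iii) tracking $\text{covol}(\Lambda')$ through the quotient-lattice covolume and the rescaling and confirming that it cancels. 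Alternatively one can run the entire argument in one step through $G/\Ga_{(e_1,\dots,e_k)}$ and fix the overall constant by a direct Haar-volume computation on the analogous block group with $\SL(d-k,\R)$ in the corner, but the normalization bookkeeping there is of comparable delicacy.
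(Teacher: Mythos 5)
Your plan is essentially sound and would yield the theorem, but it follows a genuinely different route from the paper. You induct on the tuple length $k$: group $\hat f^k$ by its first $k-1$ entries, characterize the admissible $v_k$ as those whose projection is primitive in the rank $d-k+1$ quotient lattice, unfold over $G/\Gamma_{k-1}$, disintegrate along $G/\Gamma_{k-1}\to G/G_{k-1}$, and apply the classical ($k=1$) Siegel formula in dimension $d-k+1$ on the fibre $G_{k-1}/\Gamma_{k-1}$, so that the constant builds up as $c_{d,k}=c_{d,k-1}/\zeta(d-k+1)$ one factor at a time. The paper instead does a single unfolding for the given $k$: it identifies $G/G_k$ with the open dense orbit $L$ of linearly independent $k$-tuples (carrying the pulled-back Lebesgue measure) and $\Gamma/\Gamma_k$ with $P^k(\Z^d)$ (carrying counting measure), applies the quotient integral formula of Raghunathan to the double fibration $G/\Gamma_k$ over $G/\Gamma$ and over $G/G_k$, and obtains the formula at once with constant $\mu_{G_k/\Gamma_k}(G_k/\Gamma_k)$; the identification of that constant with $1/(\zeta(d)\cdots\zeta(d-k+1))$ is then a separate volume computation in the Appendix, done by a recursion using Poisson summation (which also yields $\mathrm{Vol}(\SL(d,\R)/\SL(d,\Z))$). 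What your route buys is that you never need to compute $\mathrm{Vol}(G_k/\Gamma_k)$ or invoke Poisson summation: the zeta factors come from the lower-dimensional classical formula taken as a black box; what it costs is exactly the fibre bookkeeping you flag as the main obstacle — the torus-bundle structure of $G_{k-1}/\Gamma_{k-1}$, the equidistribution of the offset coset (where primitivity of the projected vector is indeed the crucial point), the cancellation of the $\mathrm{covol}(\Lambda')$ factors, and the check that the fibre's normalized measure projects to the probability Haar measure on $\SL(d-k+1,\R)/\SL(d-k+1,\Z)$ for which the classical formula is stated. These verifications are routine but must be carried out for the constant to come out clean; in the paper the analogous normalization work is concentrated in the appendix computation instead. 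Note also that your single-orbit claim for $P^{k-1}(\Z^d)$ uses $k-1<d$ (a spare column to absorb the sign), which holds since $k<d$, so no gap there.
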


\begin{proof}
Let $\{e_1,\dots,e_d\}$ be the canonimcal basis of $\R^d$. For $G=\SL(d,\R)$ and $\Gamma=\SL(d,\Z)$ and the $k$-tuple $(e_1,\dots,e_k)$, be , let 
\begin{align*}
     G_k:&=\{g\in G: g.e_i=e_i,\forall 1 \le i \le k\}, \\
     \Gamma_k:&=\{g\in \Gamma: g.e_i=e_i,\forall 1 \le i \le k\}.
\end{align*}
be the stabilizer subgroup of $(e_1,\dots,e_k)$ in $G$ and $\Gamma$, respectively.

Now consider the subset
$$L:=\{(v_1,\dots,v_k) \in \R^{dk}: v_1,\dots,v_k \text{ are linearly independent vectors in }\R^d \}$$

\begin{claim}
L is open dense in $\R^{dk}$ and in particular $\R^{dk}-L$ is of Lebesgue measure zero.
\end{claim}
\begin{proof}[Proof of claim ]\renewcommand{\qedsymbol}{\ensuremath{\#}}
That $L$ is open follows from the condition that linear independence implies that $[v_1,\dots,v_k]$ is a full-rank matrix (there exists at least one $k\times k$ submatrix with determinant zero). 

To see it is dense, we observe that this is equivalent to proving that the set of full-rank $d\times k$ matrices, denoted $F$, is dense in $M(d\times k,\R)$. Noticing that its complement $F^c$ is contained in some subvariety (of stricly lower dimension)
$$\{A\in M(d\times k,\R): \det(A_{k\times k})=0\},$$
for some $k\times k$ submatrix of $A$. Therefore $F$ must be dense in $\R^{dk}$.
\end{proof}

\begin{claim}
$L$ is equal to the $G$-orbit of the $k$-tuple $(e_1,\dots,e_k)$ in $\R^{dk}$.
\end{claim}
\begin{proof}[Proof of claim ]\renewcommand{\qedsymbol}{\ensuremath{\#}}
Indeed, for any $g\in G=\text{SL}(d,\R)$, the tuple $(g.e_1,\dots,g.e_k)$ corresponds to the first $k$ columns of the matrix $g$. However, any $k$ linearly independent vectors $v_1,\cdots, v_k$ in $\R^d$ ($k<d$) can be completed to a $d\times d$ matrix of determinant $1$ (by adding diagonal entries to the last $d-k$ columns). 
\end{proof}

Now consider the map
\begin{align*}
     \phi_G: G \to L\subset \R^{dk}, g\mapsto (g.e_1,\dots,g.e_k)
\end{align*}
By the Orbit-Stabilizer theorem, we have the identification of homogeneous spaces $\phi_G': G/G_k~\tilde{\longrightarrow}~L$. Since $L$ is open dense in $\R^{dk}$ and the Lebesgue measure on $\R^{dk}$ (viewed as a product (Lebesgue) measure on $\underbrace{\R^{d} \times \cdots \times \R^{d}}_{k\text{-times}}$) is invariant under $G=\SL(d,\R)$. The pull-back of the Lebesgue measure on $\R^{dk}$ gives a (unique up to scalar multiple) $G$-invariant Haar measure $\mu_{G/G_k}$ on $G/G_k$ (uniqueness of Haar measure on $G/G_k$ follows from the unimodularity of $G$).

\begin{claim}
$P^k(\Z^d)$ is equal to the $\Gamma$-orbit of the $k$-tuple $(e_1,\dots,e_k)$ in $\R^{dk}$.
\end{claim}
\begin{proof}[Proof of claim ]\renewcommand{\qedsymbol}{\ensuremath{\#}}
Let $(q_1,\dots,q_k)$ be any $k$-tuple of integer vectors in $\Z^d$ that are extendable to a basis $\{q_1,\dots,q_d \}$ of $\Z^d$, and as a basis we have $\det[q_1 \dots q_d ]=1$ (up to adjusting the sign of the last column $q_d$) and hence $(q_1,\dots,q_k)$ lies in the $\Gamma$-orbit of the $k$-tuple $(e_1,\dots,e_k)$.   

On the other hand, for any $(g.e_1,\dots,g.e_k)$, where $g\in \Gamma=\SL(d,\Z)$, clearly $\{g.e_1,\dots,g.e_k\}$ can be completed to a basis $\{g.e_1 \dots ,g.e_d\}$ of $\Z^d$.
\end{proof}
It follows that the map
\begin{equation*}
     \phi_{\Gamma}: \Gamma \to P^k(\Z^d) \subset \R^{dk}, \gamma \mapsto (\gamma.e_1,\dots,\gamma.e_k)
\end{equation*}
gives the identification of $\Gamma$-homogeneous spaces $\phi_{\Gamma}: \Gamma/\Gamma_k ~\tilde{\longrightarrow}~P^k(\Z^d)$, under which the counting measure on $P^k(\Z^d)$ (clearly $\Gamma$-invariant) can be pulled back to a (unique up to scalar multiples) $\Gamma$-invariant Haar measure on $\Gamma/\Gamma_k$, denoted $\mu_{\Gamma/\Gamma_k}$. Note that the summation over $P^k(\Z^d)$ is equal to the integration with respect to the counting measure $\mu_{\Gamma/\Gamma_k}$ over $\Gamma/\Gamma_k$.

By the Lemma 1.6, Chapter I in \cite{RA72}, the invariant measures on $G/\Gamma$ and $\Gamma/\Gamma_k$ give an invariant measure on $\Gamma/\Gamma_k$ and we have the quotient integral formula
\begin{align*}
    &\int_{G/\Gamma} \int_{\Gamma/\Gamma_k} \varphi(gl\Gamma_k)~ d\mu_{\Gamma/\Gamma_k}(l\Gamma) d\mu(g)\\
    =&\int_{G/\Gamma_k}\varphi(g\Gamma_k)~ d\mu_{G/\Gamma_k}(g\Gamma)\\
    =&\int_{G/G_k} \int_{G_k/\Gamma_k} \varphi(gg_k\Gamma_k)~ d\mu_{G_k/\Gamma_k}(g_k\Gamma_k) d\mu_{G/G_k}(gG_k)
\end{align*}
for any $\varphi \in L^1(G/\Gamma_k)$.

Now for any $f\in L^1(\R^{dk})$, we first identify $f$ with a function in $L^1(G/G_k)$ via $\phi_G$. For this new $f$, we define $\varphi_f$ on $G/\Gamma_k$ by setting it as of constant value on each $G_k$-coset:
$$\varphi_f(g\Gamma_k):=f(g G_k),\forall g\in G.$$
In other words, $\varphi_f(g\Gamma_k)= \varphi_f(h\Gamma_k)$, whenever $h^{-1}g\in G_k$.

It follows that
$$\int_{G/\Gamma_k} \varphi_f(g\Gamma_k) d\mu_{G/\Gamma_k}=\mu_{G_k/\Gamma_k}(G_k/\Gamma_k) \cdot \int_{G/G_k} f(g G_k)d\mu_{G/G_k}(g G_k)< \infty.$$
So $\varphi_f \in L^1(G/\Gamma_k)$.

\begin{claim}
The inner integral on the left hand side is (recall that the integration with respect to counting measure on $\Gamma/\Gamma_d$ is the same as the sum over primitive tuples) 
$$\int_{\Gamma/\Gamma_k} \varphi_f(gl\Gamma_k) d\mu_{\Gamma/\Gamma_k}(l)
=\sum_{(v_1,\dots,v_k)\in P^k(g\Z^d)} f(v_1,\dots,v_k)=:\hat{f}^k(g\Z^d).$$
\end{claim}
\begin{proof}[Proof of claim ]\renewcommand{\qedsymbol}{\ensuremath{\#}}
First recall by our identification
\begin{align*}
    &\sum_{(v_1,\dots,v_k)\in P^k(g\Z^d)} f(v_1,\dots,v_k)\\ =&\sum_{(he_1,\dots,he_k)\in P^k(g\Z^d), h\in G} f(he_1,\dots,he_k)\\
    =&\sum_{(he_1,\dots,he_k)\in P^k(g\Z^d), h\in G} f(hG_k)
    \tag{viewing $f$ as a function on $G/G_k$}\\
    =&\sum_{(g^{-1}he_1,\dots,g^{-1}he_k)\in P^k(\Z^d), h\in G} f(hG_k)\\    
    =&\sum_{(le_1,\dots,le_k)\in P^k(\Z^d), l\in G} f(glG_k)
    \tag{change of variable $l:=g^{-1}h$}\\ 
    =&\int_{\Gamma/\Gamma_k} f(glG_k)d\mu_{\Gamma/\Gamma_k}(l\Gamma_k)
    \tag{summation to integration w.r.t. counting measure}\\
    =&\int_{\Gamma/\Gamma_k} \varphi_f(gl\Gamma_k)d\mu_{\Gamma/\Gamma_k}(l\Gamma_k)
    \tag{definition of $\varphi_f$}\\
\end{align*}
\end{proof}

Hence this proves 
\begin{equation*}
    \int_{X} \hat{f}^k d\mu = c_{k,d} \int_{\R^{dk}} f dv_1 \cdots dv_k,
\end{equation*}
with $c_{k,d}=\mu_{G_k/\Gamma_k}(G_k/\Gamma_k)$ to be computed in the Appendix A.
\end{proof}

Now we can give the distribution for $\Phi_i(\de)$:

\begin{proof}[Proof of Theorem \ref{di distance linear}]
Let $B$ be the ball centered at $0$ with radius $\de$.

Note that the condition $\lambda_i(\Lambda)<\de$ implies that there are at least $i$ linearly independent vectors in $\Lambda$ lying in the open ball $B$. However, this does not necessarily mean they can be extended to a basis. But thanks to the Theorem \ref{thmA5}, there exists a basis $v_1,\dots,v_d$ of $\Lambda$ such that
$$\|v_1\|=\lambda_1(\Lambda),\|v_2\|_d \asymp_d \lambda_2(\Lambda),\dots,\|v_d\| \asymp_d \lambda_d(\Lambda).$$

It follows that there exists a constant factor $\eta_d>1$ such that if we dilate the ball $B$ by $\eta_d$ to a new ball $B'$ (centered at the origin with radius $\eta_d \de$), we have
$\Lambda \cap B'$ contains $i$ linearly independent vectors that can be extended to a basis of $\Lambda$. It follows that (since by symmetry $v$ and $-v$ must be contained in $\Lambda \cap B'$ simultaneously and any permutation of this $i$-tuple also gives a new primitive $i$-tuple):
$$|P^i (\Lambda) \cap B'^i|\ge 2^i i!,$$
where $B'^i:=\underbrace{B' \times \cdots \times B'}_{i\text{-times}}\subset \R^{di}$.

\vspace{5mm}
Now take $f:=\mathbf{1}_{B'^{i}}$ and $\hat{f}=    \hat{f}^i(\Lambda):=\sum_{(v_1,\dots,v_i)\in P^i(\Lambda)} f(v_1,\dots,v_i)$ counts the number of points falling into $B'^i$. The left hand side of the generalized Siegel's formula (Theorem \ref{generalizedsiegel}) yields
$$\int_X \hat{f}^i d\mu \ge \int_{\{\Lambda:\lambda_i(\Lambda) \le \de \}}\hat{f}^i d\mu \ge 2^i i!\mu(\{\lambda:\lambda_i(\Lambda)\le \de \}).$$

\vspace{5mm}
On the other hand
$$\int_{\R^{di}}f dv_1 \cdots dv_i=\text{Vol}(B')^i=\eta_d^i \de^{di} c_d^i,$$
where $c_d$ is the volume of unit ball in $\R^d$.

Hence we have the upper bound
$$\mu(\{\Lambda:\lambda_i(\Lambda)\le \de \}) \le \frac{1}{i!}\left(\frac{\eta_d c_d}{2}\right )^i\de^{di}$$

For the lower bound, for $1\le i <d-1$ and $x>0$, let $N(i,x)$ denote the quantity
$$\min \{ |P^i(\Lambda)\cap B(0,x)^i|:\Lambda \in \mathcal{L}, \Lambda \cap B(0,x) \text{~contains at least~} i+1 \text{~linearly independent vectors}\},$$

namely the miminum of the number of all primitive $i$-tuples $(v_1,\cdots, v_i)$ with each component taken from the lattice $\Lambda \cap B(0,x)$ for all unimodular lattice $\La$, given $\Lambda \cap B(0,x)$ contains at least $i+1$ linearly independent vectors. Note that by our assumption and the discussion above, $N(i,\eta_d \de) \ge 2^i i!$. 

Since one can always choose a unimodular lattice with the first $i+1$ sucessive minima small enough to be contained in $B(0,x)$ for any $x>0$ whenever $i<d-1$, we have

\begin{claim}
$N(i,x) \le 2^i(i+1)!$ (independent of $x\in (0,1)$) whenever $i<d-1$ 
\end{claim}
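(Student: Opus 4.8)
The plan is to exhibit, for each fixed $x\in(0,1)$, a single explicit unimodular lattice $\Lambda$ that is admissible for the minimum defining $N(i,x)$ --- that is, $\Lambda\cap B(0,x)$ contains $i+1$ linearly independent vectors --- and for which $|P^i(\Lambda)\cap B(0,x)^i|\le 2^i(i+1)!$. The hypothesis $i<d-1$ is used precisely because it guarantees $d-i-1\ge 1$, leaving at least one coordinate direction free to absorb the covolume constraint. Concretely, I would fix any $\epsilon$ with $x/\sqrt{2}<\epsilon<x$, set $c:=\epsilon^{-(i+1)/(d-i-1)}$ so that $\epsilon^{i+1}c^{d-i-1}=1$, and take $\Lambda:=g\Z^d$ with $g=\text{diag}(\epsilon,\dots,\epsilon,c,\dots,c)$, having $i+1$ diagonal entries equal to $\epsilon$ and the remaining $d-i-1$ equal to $c$. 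Then $\Lambda$ is unimodular; and since $\epsilon<x$, the vectors $\epsilon e_1,\dots,\epsilon e_{i+1}$ lie in $\Lambda\cap B(0,x)$ and are linearly independent, so $\Lambda$ is admissible.

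The next step is to pin down $\Lambda\cap B(0,x)$ exactly. Because $\epsilon<x<1$ we have $c>1>x$, so any lattice vector with a nonzero entry in one of the last $d-i-1$ coordinates already has norm at least $c>x$; hence every vector of $\Lambda$ inside $B(0,x)$ lies in $\Lambda':=\Lambda\cap\mathrm{span}(e_1,\dots,e_{i+1})=\epsilon\Z^{i+1}\times\{0\}$, which is a primitive sublattice of $\Lambda$ since $\Lambda/\Lambda'\cong\Z^{d-i-1}$ is free. Writing such a vector as $\epsilon m$ with $m\in\Z^{i+1}$, the condition $\|\epsilon m\|\le x$ reads $\|m\|\le x/\epsilon<\sqrt{2}$, which forces $m=0$ or $m=\pm e_j$. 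Thus $\Lambda\cap B(0,x)=\{0,\pm\epsilon e_1,\dots,\pm\epsilon e_{i+1}\}$.

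It then remains to count the primitive $i$-tuples of $\Lambda$ whose entries all lie in this finite set. Here I would first record the elementary algebraic fact that if an $i$-tuple $(v_1,\dots,v_i)$ of lattice vectors extends to a $\Z$-basis of $\Lambda$, then the subgroup $\langle v_1,\dots,v_i\rangle$ is primitive in $\Lambda$, and hence --- since it is contained in $\Lambda'$ and $\Lambda'/\langle v_1,\dots,v_i\rangle$ embeds into the torsion-free group $\Lambda/\langle v_1,\dots,v_i\rangle$, which makes $\Lambda'/\langle v_1,\dots,v_i\rangle$ a finitely generated torsion-free, hence free, abelian group --- primitive in $\Lambda'$ as well. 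Consequently, dividing by $\epsilon$, any primitive $i$-tuple of $\Lambda$ inside $B(0,x)$ yields an ordered $i$-tuple $(\pm e_{j_1},\dots,\pm e_{j_i})$ that extends to a basis of $\Z^{i+1}$ (the vector $0$ is automatically excluded, since it cannot belong to a basis). Such an $i$-tuple extends to a basis of $\Z^{i+1}$ exactly when $j_1,\dots,j_i$ are pairwise distinct: if two indices coincide the tuple is linearly dependent, whereas if they are distinct, appending the one remaining standard basis vector produces a generalized permutation matrix of determinant $\pm1$. Counting the ordered choices of $i$ distinct indices from $\{1,\dots,i+1\}$ --- there are $(i+1)!$ of them --- together with the $2^i$ sign choices, one gets at most $2^i(i+1)!$ primitive $i$-tuples of $\Lambda$ in $B(0,x)$, so $N(i,x)\le 2^i(i+1)!$.

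I do not anticipate a genuine obstacle; the only point needing care is the bookkeeping ensuring that nothing extra enters $B(0,x)$. This is exactly why $\epsilon$ must be taken in the narrow window $(x/\sqrt{2},x)$ --- so that the integer vectors of norm at most $x/\epsilon$ are only $0$ and the $\pm e_j$ --- and why the inflated diagonal entries $c>1$ eliminate any contribution from the last $d-i-1$ coordinates; the reduction ``primitive in $\Lambda$ implies primitive in $\Lambda'$'' is the other ingredient to state precisely, as it is what confines the entire count to $\Z^{i+1}$.
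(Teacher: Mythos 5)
Your proof is correct and follows essentially the same route as the paper: you exhibit a diagonal unimodular lattice with $i+1$ small coordinates (your $\epsilon\in(x/\sqrt{2},x)$ plays the role of the paper's $\tfrac{3}{4}x$) and $d-i-1$ large ones, verify that $\Lambda\cap B(0,x)$ consists only of $0$ and $\pm\epsilon e_1,\dots,\pm\epsilon e_{i+1}$, and count at most $2^i(i+1)!$ primitive $i$-tuples. Your extra care about primitivity passing to the sublattice $\Lambda'$ is a harmless refinement of the paper's cruder count $2^i i!\binom{i+1}{i}$, which yields the same bound.
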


\begin{proof}[Proof of claim ]\renewcommand{\qedsymbol}{\ensuremath{\#}}

Consider the unimodular lattice

$$\Lambda_{x}:=\left \{\frac{3}{4}x e_1,...\frac{3}{4}x e_{i+1},\frac{1}{(\frac{3}{4}x)^{\frac{i+1}{d-i-1}}}e_{i+2},...\frac{1}{(\frac{3}{4}x)^{\frac{i+1}{d-i-1}}} e_{d} \right\}.$$

Note that when $x<1$, we have
$$\La_x \cap B(0,x)=\left\{\pm \frac{3}{4}x e_1,...,\pm \frac{3}{4}x e_{i+1}  \right\}$$
since any integer linear combination
$$n_1 \frac{3}{4}x e_1+...+n_{i+1} \frac{3}{4}x e_{i+1}$$
with some $|n_j|\ge 2$ or at least two of $n_j\ne 0$ must be outside of $B(0,x)$. So $$|P^i(\Lambda_x)\cap B(0,x)^i|\le 2^i i! {i+1 \choose i}=2^i (i+1)!.$$
Therefore $N(i,x)\le 2^i (i+1)!$.
\vspace{5mm}
\end{proof}

Now set $x=\de$. The idea is to separated the integration domain into two parts: $\{\Lambda:\hat{f}^i(\Lambda)< N(i,\de)\}$ and $\{\Lambda:\hat{f}^i(\Lambda)\ge N(i,\de)\}$. We will see that the integration over the second domain contribute insignificantly as $\de \to 0$. Hence,
\begin{align*}
    \int_X \hat{f}^id\mu 
    = & \int_{\{\Lambda:\hat{f}^i(\Lambda) < N(i,\de)\}} \hat{f}^id\mu + \int_{\{\Lambda:\hat{f}^i(\Lambda)\ge N(i,\de)\}} \hat{f}^id\mu \\
\end{align*}
Notice that by our choice of $B'$, $f$ and the definition of $\hat{f}^i$, $f^i(\La)=|P^i(\La)\cap B^i|$, and the first term \footnote{note that if $i=d-1$ this argument won't make sense since $N(d,\de)$ will become zero if $\de \to 0$ by the Minkowski's second convex body theorem \ref{Min2}.}
\begin{align*}
  \int_{\{\Lambda:\hat{f}^i(\Lambda) < N(i,\de)\}} \hat{f}^id\mu  
  =& \int_{\{\Lambda:\hat{f}^i(\Lambda)< N(i,\de)\}} \hat{f}^id\mu \\
  =& \int_{\{\La:\Lambda \cap B \text{~contains no~} i+1 \text{~linearly independent vectors}, ~\hat{f}^i(\Lambda)< N(i,\de)\}}\hat{f}^i d\mu\\
  \le & \int_{L_i}2^i(i+1)! d\mu\\
  =& 2^i (i+1)!\mu(S_i) \tag{by the estimate from the claim above}
\end{align*}

where $L_i$ denote the set of unimodular lattices that contain no $i+1$ linearly independent vectors but contain at least one family of primitive $i$-set of vectors (so that the integral will not vanish). But clearly $S_i\subset \{\La:\la_i(\La)\le \de\}$.

\vspace{5mm}
Now we look at the second term $ \int_{\{\Lambda:\hat{f}^i(\Lambda)\ge N(i,\de)\}} \hat{f}^id\mu$. If $\hat{f}^i(\Lambda)\ge N(i,\de)$, by definition it means the ball $B=B(0,\de)$ contains at least $i+1$ linearly independent vectors in $\La$. But again by symmetry that extra vector has to come in pairs namely $B\cap \Lambda$ has to contain both $v_{i+1}$ and $-v_{i+1}$.

Therefore for such $\Lambda$,
$$|P^i(\Lambda) \cap B^i)| \le \frac{1}{2}|P^{i+1}(\La) \cap B^{i+1})|.$$

Notice that the left hand side is precisely $\hat{f}^i(\La)$. Let $f_1=\mathbf{1}_{B^{i+1}}$, a function in $\R^{d(i+1)}$, we have
\begin{align*}
    \int_{\{\Lambda:\hat{f}^i(\Lambda)
    \ge N(i,\de)\}} \hat{f}^id\mu = &
     \frac{1}{2}\int_{\{\Lambda:\hat{f}^i(\Lambda)> N(i,\de)\}} \hat{f_1}^{i+1}d\mu \\
     \le & \frac{1}{2}\int_X \hat{f_1}^{i+1} d\mu \\
     = & \frac{1}{2}\int_{\R^{d(i+1)}} f_1 dv_1\cdots dv_{i+1} \tag{by the generalized Siegel's formula}\\ 
     = & \frac{1}{2}(\int_{\R^{d}} \mathbf{1}_{B} dv)^{i+1} \\
     = & \frac{1}{2} (c_d \de^d)^{i+1}
\end{align*}

Therefore we obtain the lower bound
\begin{align*}
    \mu(\{\Lambda: \lambda_i(\Lambda) \le \de \})\ge &
    \frac{1}{N(i,\de)} \left(\int_X \hat{f}^id\mu - \int_{\{\Lambda:\hat{f}^i(\Lambda)\ge N(i,\de)\}} \hat{f}^id\mu \right) \\
    = & \frac{1}{2^i (i+1)!} \left(\int_{\R^{di}} f dv_1\cdots dv_i - \int_{\{\Lambda:\hat{f}^i(\Lambda)\ge N(i,\de)\}} \hat{f}^id\mu \right)\\
    = & \frac{1}{2^i(i+1)!}(c_d^i \de^{di}- \frac{1}{2} c_d^{i+1} \de^{d(i+1)}).
\end{align*}

This finishes the case when $i<d-1$.

\vspace{5mm}
To cover the remaining case when $i=d-1$, we will study the following example:

\begin{example}[Measure of a subset of $\{\La:\la_i(\La)\le \de\}$]~\\
In view of Iwasawa decomposition of $G=\SL(d,\R)$, we shall first construct a subset of $G$ that will shrink the first $i$ canonical basis vectors in $\Z^d$: Let $S_i$ denote the collection of all elemnents of the form $kan$, where $ k\in SO(d,\R)$, 
\begin{equation}
 a=\text{diag}(a_1,...a_i,a_{i+1},...,a_d) \in \SL(d,\R), 
\end{equation}
with $\frac{\sqrt 3}{2}a_{j-1}\le a_j<\frac{\de}{\sqrt i}, \forall 1\le j \le i$ (assuming $a_0=0$ as convention) and $\frac{\sqrt 3}{2} a_{j-1}\le a_j \le 1$ whenever $j>i$,
and
\begin{equation}
    n= 
\begin{bmatrix}
1 & n_{12} & n_{13} & ... & n_{1d} \\
0 & 1 & n_{23} & ... & n_{2d} \\
0 & 0 & 1 & ... & n_{3d} \\
\vdots &  \vdots & \vdots &\ddots & \vdots\\
0 & 0 & 0 & ... & 1 \\
\end{bmatrix}
\end{equation}
with $\frac{1}{2} \le n_{ij}\le 1$ for all $i<j$. 

Clearly, with restrictions $\frac{\sqrt 3}{2}a_{j-1}\le a_j$ and $\frac{1}{2}\le n_{ij}$, $S_i$ is contained in the Siegel domain $\Sigma:=\Sigma_{\frac{2}{\sqrt{3}},\frac{1}{2}}$. The reason that we choose to restrict our set to Siegel domain is that over Siegel domain, we have very good control on the overlaps modulo the $\Gamma=\SL(n,\Z)$ action thanks to Theorem \ref{finiteness of nonempty intersections}.

\begin{claim}
$\la_j(S_i\Z^d)\le \de$ for all $j\le i$. 
\end{claim}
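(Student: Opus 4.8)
The plan is to exhibit, for each $g=kan\in S_i$, an explicit set of $i$ linearly independent vectors of $g\Z^d$ all having Euclidean norm at most $\de$; by the definition of successive minima this gives $\la_i(g\Z^d)\le \de$, and the monotonicity $\la_1\le\cdots\le\la_i$ then yields $\la_j(g\Z^d)\le\de$ for every $j\le i$, which is exactly the claim.

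The natural candidates are the first $i$ columns of $g$, namely $ge_1,\dots,ge_i$, where $e_1,\dots,e_d$ is the canonical basis of $\R^d$. These lie in $g\Z^d$ and are linearly independent since $g\in\SL(d,\R)$ is invertible. As $k\in\SO(d,\R)$ is orthogonal, $\|ge_l\|=\|ane_l\|$, so it suffices to estimate $\|ane_l\|$ for $1\le l\le i$. Since $n$ is upper-triangular unipotent, its $l$-th column is $e_l+\sum_{m=1}^{l-1}n_{ml}e_m$, hence
$$ane_l=a_le_l+\sum_{m=1}^{l-1}a_m n_{ml}e_m,\qquad \|ane_l\|^2=a_l^2+\sum_{m=1}^{l-1}a_m^2 n_{ml}^2.$$
For $l\le i$ every index $m$ occurring in this sum satisfies $m\le l\le i$, so $a_m<\de/\sqrt{i}$ by the defining constraints on the $A$-part of $S_i$, while $|n_{ml}|\le 1$ by the constraint on the $N$-part; therefore $\|ane_l\|^2< \frac{\de^2}{i}+(l-1)\frac{\de^2}{i}=\frac{l\,\de^2}{i}\le \de^2$. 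Thus $\|ge_l\|<\de$ for all $l\le i$, and the claim follows.

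There is essentially no obstacle here: once the right vectors are chosen the argument is a one-line computation. The only point that must be checked carefully is that every diagonal entry $a_m$ entering the estimate carries the small bound $\de/\sqrt{i}$ — this is precisely why the range $1\le j\le i$ (rather than all $j\le d$) and the normalizing factor $\sqrt{i}$ were built into the definition of $S_i$, so that a sum of at most $i$ terms of size $\de^2/i$ stays $\le\de^2$.
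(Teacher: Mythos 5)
Your proof is correct and follows essentially the same route as the paper: both take the first $i$ columns $g e_1,\dots,g e_i$ of $g=kan\in S_i$, use orthogonality of $k$ to reduce to $\|an e_l\|$, and bound each squared norm by at most $i$ terms of size $\de^2/i$ using the constraints $a_m<\de/\sqrt{i}$ and $|n_{ml}|\le 1$. Your write-up is in fact slightly more careful than the paper's (you make the linear independence and the monotonicity step $\la_j\le\la_i$ explicit, and your index bookkeeping avoids the paper's small typos in the column expression), but there is no substantive difference in approach.
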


\begin{proof}[Proof of Claim ]\renewcommand{\qedsymbol}{\ensuremath{\#}}
Indeed, for $kan \in S_i$ and $j\le i$, 
\begin{align*}
    \|kane_j \|_2
    =&\|[a_1n_{1j},...,a_{i-1}n_{i-1,j},a_i]^T\|_2 \tag{$k$ preserves the distance}\\
    =& \sqrt{a_1^2n_{1j}^2+\cdots+a_{i-1}^2n_{i-1,j}^2+a_i^i}\\
    \le & \sqrt{i\cdot \frac{\de^2}{i}}=\de
\end{align*}
\end{proof}

Therefore $\pi(S_i)\subset \{\La:\la_i(\La)\le \de\}$. Now we shall give a lower bound estimate for the measure of $\pi(S_i)$ in $G/\Gamma$. 

Let $f=\mathbf{1}_{S_i}$ denote the indicator function of $\pi(S_i)$ on $G/\Gamma$ and let $N_d$ denote the (finite) number of $\gamma$ for which $\Sigma \cap F\gamma$ is nonempty, then since $S_i$ is a subset of the Siegel set $\Sigma$, $S_i= \cup_{\gamma \in \Gamma} (S_i \gamma^{-1} \cap F)$ is a finite union of no more than $N_d$ nonempty sets. Let $m_d$ denotes the largest measure of these $N_d$ sets, it follows that
\begin{align*}
    \int_G f(g) dg
    =& \int_{G/\Gamma}\sum_{\gamma \in \Gamma}f(g\gamma)d(g\Gamma)\\
    \le & N_d m_d \\
    \le & N_d \mu_{G/\Gamma}(S_i).
\end{align*}
Now we compute $\int_G f(g) dg$ via Iwasawa decomposition in view of Theorem \ref{decomposition of haar measure on G}:
\begin{align*}
     &\int_G f(g) dg \\
    =& \int_{K}\int_{A}\int_{N} f(kan)\rho(a)dk da dn\\
    =& \int_{K}dk  \int_0^{\de/ \sqrt{i}} \int_{\frac{\sqrt 3}{2}a_1}^{\de/ \sqrt{i}} \cdots \int_{\frac{\sqrt 3}{2}a_{i-1}}^{\de/ \sqrt{i}} \int_{\frac{\sqrt{3}}{2}a_i}^{1} \cdots \int_{\frac{\sqrt{3}}{2}a_{d-1}}^{1}    \frac{\rho(a)}{a_1\dots a_{d-1}} da_{d-1} \dots da_1 \int_{[\frac{1}{2},1]^{d(d-1)}}\prod_{i<j}dn_{ij} \\
    \ge & \text{Vol}(K) \frac{1}{2^{d(d-1)}} \int_0^{\de/ \sqrt{i}} \int_{\frac{\sqrt 3}{2}a_1}^{\de/ \sqrt{i}} \cdots \int_{\frac{\sqrt 3}{2}a_{i-1}}^{\de/ \sqrt{i}} \int_{\frac{[\sqrt 3}{2},1]^{d-i-1}}  \left( a_{d-1}^1 a_{d-2}^3  \cdots a_1^{2d-3} \right) da_{d-1} \dots da_1 \\
    =& \text{Vol}(K) \frac{c_{d,i}}{2^{d(d-1)}} \int_0^{\de/ \sqrt{i}} \int_{\frac{\sqrt 3}{2}a_1}^{\de/ \sqrt{i}} \cdots \int_{\frac{\sqrt 3}{2}a_{i-1}}^{\de/ \sqrt{i}} \left( a_{i}^{2d-1-2i} \cdots a_1^{2d-3} \right) da_{i} \dots da_1 \\ 
    \ge & D_d \de^{(2d-i-1)i}+o(\de^{(2d-i-1)i})
\end{align*}
where the constant $c_d,i$ comes from the integration w.r.t. the variables $a_{i+1},...a_d$ and the exponential $\de^{(2d-i-1)i}$ comes from $(2d-i-1)i=2d-1-2i+\cdots+2d-3+i$ (the last $i$ is from the accumulation of the total order of anti-derivatives of polynomials) and when $i=d-1$, $\de^{(2d-i-1)i}=d(d-1)=di$.

Therefore, for $i=d-1$, we have proved
$$\mu\{\La:\la_i(\La)\le \de\}\ge C_d' \de^{d(d-1)}, \text{~as~} \de<<1.$$
\end{example}
This completes the proof for all $1\le u \le d-1$.
\end{proof}

By looking at the dual lattice, we can also obtain the tail bound for this distribution. 
\begin{corollary}\label{tail bound}
For $1<i\le d$ there exist $C_d$ and $C_d'$ such that 
\begin{equation*}
  C_d \de^{di} \le \mu \left(\left\{\Lambda \in \mathcal{L}: \lambda_i(\Lambda) \ge \frac{1}{\de} \right\}\right) \le C_d' \de^{di},
\end{equation*}
for all $\de << 1$.
\end{corollary}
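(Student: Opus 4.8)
The plan is to reduce the tail bound to Theorem \ref{di distance linear} by passing to the dual lattice. Two ingredients are needed. First, the duality involution $\Lambda \mapsto \Lambda^*$ is a measure-preserving bijection of $(\mathcal{L},\mu)$. Second, Theorem \ref{sucessive minima of dual lattice} says that $\lambda_i(\Lambda)$ is comparable, up to the multiplicative constant $d!$, to $1/\lambda_{d+1-i}(\Lambda^*)$, which converts a statement about $\lambda_i$ being large into one about a successive minimum of $\Lambda^*$ being small.

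For the first ingredient, recall from Proposition \ref{dual is same as transpose inverse} that writing $\Lambda = g\Z^d$ gives $\Lambda^* = {}^tg^{-1}\Z^d$; hence, under the identification $\mathcal{L} = G/\Gamma$ with $G = \SL(d,\R)$ and $\Gamma = \SL(d,\Z)$, the dual map is induced by the automorphism $\theta\colon g \mapsto {}^tg^{-1}$ of $G$, which preserves $\Gamma$ and therefore descends to a homeomorphism $\bar\theta$ of $G/\Gamma$. I would verify measure-preservation by the soft argument that $\bar\theta(g\cdot x) = \theta(g)\cdot\bar\theta(x)$, so that the push-forward $\bar\theta_*\mu$ is again a $G$-invariant probability measure on $G/\Gamma$ and hence equals $\mu$ by uniqueness of the invariant probability measure. (Alternatively one checks directly that the differential of $\theta$ at the identity has determinant of absolute value $1$.) Consequently $\mu(\{\Lambda : \lambda_i(\Lambda^*) \ge 1/\de\}) = \mu(\{\Lambda : \lambda_i(\Lambda) \ge 1/\de\})$ for every $\de > 0$.

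For the main step, fix $i$ with $1 < i \le d$ and put $j := d+1-i$, so that $1 \le j \le d-1$ and Theorem \ref{di distance linear} applies to $\Phi_j$. Applying Theorem \ref{sucessive minima of dual lattice} to $\Lambda^*$ (using $(\Lambda^*)^* = \Lambda$) yields $1 \le \lambda_i(\Lambda^*)\,\lambda_j(\Lambda) \le d!$ for every unimodular $\Lambda$, whence the chain of inclusions
\begin{equation*}
    \{\Lambda : \lambda_j(\Lambda) \le \de\} \ \subseteq\ \{\Lambda : \lambda_i(\Lambda^*) \ge 1/\de\} \ \subseteq\ \{\Lambda : \lambda_j(\Lambda) \le d!\,\de\}.
\end{equation*}
Taking $\mu$-measures and using the identity from the previous paragraph gives
\begin{equation*}
    \Phi_j(\de) \ \le\ \mu\big(\{\Lambda : \lambda_i(\Lambda) \ge 1/\de\}\big) \ \le\ \Phi_j(d!\,\de).
\end{equation*}
By Theorem \ref{di distance linear}, $\Phi_j(\de) \ge C_d\de^{dj} - o(\de^{dj}) \ge \tfrac{1}{2}C_d\de^{dj}$ for $\de$ small, and $\Phi_j(d!\,\de) \le C_d'(d!)^{dj}\de^{dj}$; feeding these into the displayed inequality yields the asserted two-sided bound for $\mu(\{\Lambda : \lambda_i(\Lambda) \ge 1/\de\})$, with exponent $dj = d(d+1-i)$, valid for $\de \ll 1$.

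I do not anticipate a genuine obstacle. The only point needing care is the measure-preservation of the duality involution, which the uniqueness argument above settles cleanly, together with the harmless bookkeeping of the constant $d!$ and of the lower-order term in Theorem \ref{di distance linear}, neither of which affects the exponent. Note that the hypothesis $i>1$ enters precisely to force $j = d+1-i \le d-1$, the range where Theorem \ref{di distance linear} is available; at the endpoint $i = d$ the argument specializes to the Kleinbock--Margulis estimate for $\Phi_1$.
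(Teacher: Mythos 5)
Your route is the same as the paper's: the paper's entire proof consists of exactly the two observations you isolate --- the dual map $\Lambda\mapsto\Lambda^*$ is measure-preserving and the transference bound $1\le\lambda_r(\Lambda)\lambda_{d+1-r}(\Lambda^*)\le d!$ of Theorem \ref{sucessive minima of dual lattice} --- followed by an appeal to Theorem \ref{di distance linear}; your write-up merely supplies the inclusions and the uniqueness-of-invariant-measure argument that the paper leaves implicit, and those steps are correct. The one substantive point is the exponent. Your bookkeeping gives $\mu(\{\Lambda:\lambda_i(\Lambda)\ge 1/\delta\})\asymp\delta^{d(d+1-i)}$, which is not the $\delta^{di}$ appearing in the statement, and this cannot be repaired: the statement as printed is mis-indexed. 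For instance, for $d=2$, $i=2$, Minkowski's second theorem gives $\lambda_2(\Lambda)\ge 1/\delta$ essentially iff $\lambda_1(\Lambda)\lesssim\delta$, so the measure is $\asymp\delta^2=\delta^{d(d+1-i)}$ rather than $\delta^4=\delta^{di}$; the two exponents agree only when $2i=d+1$. So rather than describing your conclusion as ``the asserted two-sided bound, with exponent $d(d+1-i)$,'' you should say plainly that the duality argument proves the corollary with $di$ replaced by $d(d+1-i)$ (equivalently, with $\lambda_i$ replaced by $\lambda_{d+1-i}$ in the event); the paper's one-line proof is too terse to expose this slip, and your more careful version does.
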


\begin{proof}
Recall the notion of dual lattice  (cf. \ref{dual lattice}) and notice that the dual map
$$*:X\to X, \Lambda \mapsto \La^*$$
is measure-preserving. By the Theorem \ref{sucessive minima of dual lattice}. We have 
$$1\le \lambda_r(\Lambda) \lambda_{d+1-r} (\Lambda^*) \le d!$$
for any $r=1,2,\cdots d$. Hence the corollary follows.
\end{proof}

\section{Application to the logarithm laws for the flows on the homogeneous space $G/\Ga$.}

Now we define $\Delta_i(\La):=-\log (\la_i(\La))$. It follows from taking the negative logarithm of all sides of the equation \ref{eq:ineq} that $\Delta_i(\La)$ is uniformly continuous. Recall from \cite{Kleinbock1999LogarithmLF}:

\begin{definition}
For a function $\Delta$ on a $G$-homogeneous space $X$, define the tail distribution $\Phi_{\Delta}(z):=\mu\{x\in X:\de(x)\ge z\}$. 

For $k>0$, we will also say that $\de$ is $k$ distance-like if it is uniformly continuous and in addition there exist constants $C_d$ and $C_d'$ such that 
\begin{equation*}
    C_de^{-kz} \le \Phi_{\Delta}(z) \le C_d'e^{-kz}, \forall z\in \R. 
\end{equation*}
\end{definition}

It follows from our Theorem \ref{di distance linear} that $\Delta_i$ is $di$ distance-like in the space of unimodular lattices. And as an immediate consequence of Theorem 1.7 in \cite{Kleinbock1999LogarithmLF}, we have the non-unipotent version of logarithm law:

\begin{theorem}
For any nonzero $(z_1,\dots z_d)$ with $z_1+\dots+z_d=0$, and for almost all unimodular lattice $\Lambda$ in $X=\SL(d,\R)/\SL(d,\Z)$ we have
\begin{equation}
    \limsup_{t\to \infty}\frac{\Delta_i(\exp(tz)\Lambda)}{\log t}=\frac{1}{di}.
\end{equation}
\end{theorem}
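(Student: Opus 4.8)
The plan is to obtain the statement as a direct application of the abstract logarithm law of Kleinbock and Margulis, Theorem 1.7 of \cite{Kleinbock1999LogarithmLF}. That theorem takes as input a $k$ distance-like function on a homogeneous space and a one-parameter flow satisfying a suitable mixing property, and outputs the value $1/k$ for the almost-everywhere $\limsup$ of the function along the orbit, normalized by $\log t$. So the proof reduces to verifying its two hypotheses in our situation: a measure-theoretic one about the target function $\Delta_i=-\log\lambda_i$, and a dynamical one about the flow $\exp(tz)$ on $X=\SL(d,\R)/\SL(d,\Z)$.

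First I would check the measure-theoretic hypothesis, i.e.\ that $\Delta_i$ is $di$ distance-like. Uniform continuity is already recorded: applying $-\log$ to the two-sided bound \ref{eq:ineq} gives $|\Delta_i(b\La)-\Delta_i(\La)|\le \max\{\log\|b\|_{op},\log\|b^{-1}\|_{op}\}$ for all $b\in\SL(d,\R)$ and all $\La$, which is uniform continuity for the metric on $X$ induced by a right-invariant metric on $G$. For the tail, substitute $\de=e^{-z}$ in Theorem \ref{di distance linear}: then
$$\Phi_{\Delta_i}(z)=\mu\{\La:\lambda_i(\La)\le e^{-z}\}=\Phi_i(e^{-z}),$$
so that $C_d e^{-diz}-o(e^{-diz})\le \Phi_{\Delta_i}(z)\le C_d'e^{-diz}$ as $z\to+\infty$, i.e.\ $\Phi_{\Delta_i}(z)\asymp e^{-diz}$ in the relevant large-$z$ regime. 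Hence $\Delta_i$ is $di$ distance-like, as already asserted in the paragraph preceding the statement.

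Next I would check the dynamical hypothesis. Writing $z=\diag(z_1,\dots,z_d)$, the assumptions $z\ne 0$ and $\sum_j z_j=0$ say that $\{\exp(tz)\}_{t\in\R}$ is a nontrivial $\R$-diagonalizable one-parameter subgroup of $\SL(d,\R)$ with at least one positive and at least one negative exponent; in particular it lies in no compact subgroup. By the Howe--Moore theorem together with the spectral gap for $\SL(d,\R)/\SL(d,\Z)$ (equivalently, exponential decay of matrix coefficients), the flow $\La\mapsto \exp(tz)\La$ on $X$ has the mixing property under which Theorem 1.7 of \cite{Kleinbock1999LogarithmLF} is stated. With both hypotheses verified, that theorem gives, for $\mu$-almost every $\La$,
$$\limsup_{t\to\infty}\frac{\Delta_i(\exp(tz)\La)}{\log t}=\frac{1}{di},$$
which is exactly the claim.

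Finally I would indicate where the real work lies. Inside \cite{Kleinbock1999LogarithmLF} the bound $\limsup\le 1/(di)$ is the easy half — the convergence Borel--Cantelli lemma applied to the events $\{\Delta_i(\exp(t_nz)\La)\ge (\tfrac1{di}+\varepsilon)\log t_n\}$ along a geometric time sequence, using only the upper tail estimate $\Phi_{\Delta_i}(z)\le C_d'e^{-diz}$ and then interpolating by uniform continuity — while $\limsup\ge 1/(di)$ is the substantial direction, requiring the lower tail estimate $\Phi_{\Delta_i}(z)\ge C_de^{-diz}$ to keep the target sets large enough and the quantitative mixing of the flow to make them quasi-independent along a suitable sequence, so that the divergence Borel--Cantelli lemma applies. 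Thus the main obstacle is entirely absorbed into the cited theorem; on our side the only genuine inputs are the two-sided tail asymptotic $\Phi_{\Delta_i}(z)\asymp e^{-diz}$ (Theorem \ref{di distance linear}) and the elementary uniform continuity of $\Delta_i$.
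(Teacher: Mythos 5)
Your proposal is correct and follows essentially the same route as the paper: the result is deduced as an immediate application of Theorem 1.7 of \cite{Kleinbock1999LogarithmLF}, after noting that $\Delta_i=-\log\lambda_i$ is $di$ distance-like via the tail estimate of Theorem \ref{di distance linear} and the uniform continuity coming from the inequality \ref{eq:ineq}. Your explicit verification of the dynamical hypothesis for the diagonal flow $\exp(tz)$ is a welcome bit of extra care that the paper leaves implicit, but it does not change the argument.
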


For the unipotent flow and first successive minimum, Athreya and Margulis  proved the following logarithm law:

\begin{theorem}[\cite{AM09}, Theorem 2.1]
Let $(u_t)_{t\in \R}$ be a unipotent one-parameter subgroup of $\SL(d,\R)$ and $X:=\SL(d,\R)/\SL(d,\Z)$. For $\mu$-a.e. $\La$ in $X$, we have
\begin{equation*}
    \limsup_{t\to \infty}\frac{-\log \la_1(h_t\La)}{\log t}=\frac{1}{d}.
\end{equation*}
\end{theorem}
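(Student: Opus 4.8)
The plan is to derive the theorem from the fact, contained in Theorem \ref{di distance linear} for $i=1$ (equivalently, from the Kleinbock--Margulis bound on $\Phi_1$), that $\Delta_1(\La):=-\log\la_1(\La)$ is \emph{$d$ distance-like}: it is uniformly continuous (apply $-\log$ to the inequality \eqref{eq:ineq}, cf.\ Lemma \ref{bound}) and satisfies $C_d e^{-dz}\le \mu\{\La\in X:\Delta_1(\La)\ge z\}\le C_d' e^{-dz}$ for all large $z$. Writing $(u_t)$ for the given unipotent one-parameter subgroup, I would establish the two matching inequalities $\limsup_{t\to\infty}\Delta_1(u_t\La)/\log t\le 1/d$ and $\limsup_{t\to\infty}\Delta_1(u_t\La)/\log t\ge 1/d$ separately, for $\mu$-a.e.\ $\La$.

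For the upper bound, fix $\e>0$. By $u_n$-invariance of $\mu$ and the tail estimate, $\mu\{\La:\Delta_1(u_n\La)\ge (\tfrac1d+\e)\log n\}\le C_d' n^{-1-d\e}$, which is summable; the Borel--Cantelli lemma then gives, for a.e.\ $\La$, that $\Delta_1(u_n\La)<(\tfrac1d+\e)\log n$ for all large $n$. To pass to continuous time, write $u_t=u_{t-n}u_n$ and set $M:=\sup_{0\le s\le 1}\|u_s^{-1}\|_{op}<\infty$; Lemma \ref{bound} applied with $b=u_{t-n}^{-1}$ gives $\la_1(u_n\La)\le M\,\la_1(u_t\La)$ for $t\in[n,n+1]$, hence $\Delta_1(u_t\La)\le \Delta_1(u_n\La)+\log M$. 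Therefore, for a.e.\ $\La$ and all large $t$, $\Delta_1(u_t\La)<(\tfrac1d+\e)\log t+\log M$, so the limsup is at most $\tfrac1d+\e$; letting $\e\downarrow0$ along a sequence concludes this direction.

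For the lower bound I would run a dynamical Borel--Cantelli argument. Fix $\e>0$ (small, so $1-d\e>0$), put $r_t:=t^{-1/d+\e}$ and $B_t:=\{x\in X:\la_1(x)\le r_t\}$, so $\mu(B_t)\asymp r_t^d=t^{-(1-d\e)}$ by the distance-like property. With $t_n:=n$ and $A_n:=u_n^{-1}B_n$ we get $\mu(A_n)=\mu(B_n)\asymp n^{-(1-d\e)}$, hence $\sum_n\mu(A_n)=\infty$. If $\La\in A_n$ then $\Delta_1(u_n\La)\ge(\tfrac1d-\e)\log n$, so the event $\{\La:\La\in A_n\ \text{for infinitely many}\ n\}$ is contained in the $(u_s)$-invariant set $\{\La:\limsup_{t\to\infty}\Delta_1(u_t\La)/\log t\ge \tfrac1d-\e\}$. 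Thus, by the divergence form of the Borel--Cantelli lemma (which yields positive measure) followed by ergodicity of $(u_t)$ on $X$ (Moore's theorem), this invariant set has full measure, \emph{provided} one verifies the quasi-independence bound $\sum_{m,n\le N}\mu(A_m\cap A_n)\le C\big(\sum_{n\le N}\mu(A_n)\big)^2$ for some $C$ and all $N$. By $u_n$-invariance, $\mu(A_m\cap A_n)=\mu\big(B_m\cap u_{-(n-m)}B_n\big)$ for $m<n$, which reduces the bound to a quantitative mixing statement: $\mu(B_m\cap u_{-s}B_n)$ should agree with $\mu(B_m)\mu(B_n)$ up to an error that is summable over $s\ge1$ after the double-sum bookkeeping. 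This is obtained by approximating $\mathbf{1}_{B_m},\mathbf{1}_{B_n}$ from outside by smooth compactly supported functions with Sobolev norms bounded polynomially in $1/r_{t_n}$, invoking the effective decay of matrix coefficients of $\SL(d,\R)$ on $L^2_0(X)$ at the unipotent elements $u_{-s}$ (a polynomial rate, coming from the spectral gap; Selberg's bound when $d=2$), and controlling the part of $B_t$ deep in the cusp by the quantitative nondivergence estimates of Dani--Margulis. Combining the two inequalities and letting $\e\downarrow0$ gives the theorem.

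The main obstacle is precisely this quasi-independence step for a \emph{unipotent} flow. Unlike the diagonalizable case, where exponential mixing makes the analogous estimate immediate, $(u_t)$ mixes only polynomially, while the targets $B_t$ intrude into the cusp, where the smooth approximants have Sobolev norms that grow with $t$; one must check that the (polynomial) mixing rate outpaces this growth and that the cusp contribution is negligible. Choosing the sequence $t_n$, the smoothing scale, and the cutoff height in a mutually compatible way so that the double sum closes is the technical heart of the argument, and is where the work of \cite{AM09} is concentrated.
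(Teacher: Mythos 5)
Your upper bound is sound and is essentially the same argument the paper uses (Borel--Cantelli applied to the tail estimate of Theorem \ref{di distance linear}, i.e.\ the Kleinbock--Margulis bound for $i=1$, together with Lemma \ref{bound} to interpolate between integer times), so that half matches the paper's treatment. The problem is the lower bound, which is the entire content of the Athreya--Margulis theorem. Your argument there is explicitly conditional: everything hinges on the quasi-independence estimate $\sum_{m,n\le N}\mu(A_m\cap A_n)\le C\bigl(\sum_{n\le N}\mu(A_n)\bigr)^2$, and you do not prove it --- you sketch a route via smooth approximation of $\mathbf{1}_{B_t}$, polynomial decay of matrix coefficients at unipotent times, and Dani--Margulis nondivergence, and then acknowledge that making the Sobolev-norm growth, the smoothing scale, and the (small) polynomial mixing exponent compatible ``is where the work of \cite{AM09} is concentrated.'' That is precisely the hard point: the targets $B_t$ shrink into the cusp, so the smooth approximants have norms growing polynomially in $t$, while the correlation decay along $u_s$ is only polynomial with an exponent coming from the spectral gap; it is not at all automatic that the double sum closes, and indeed Athreya--Margulis avoid this mechanism altogether (their argument runs through Siegel/Rogers-type second-moment bounds, a ``random Minkowski'' theorem). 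So as written the proposal is an outline that defers its essential step to the very reference being proved, which is a genuine gap.

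For comparison, the paper never reproves \cite{AM09}: the statement is quoted, and the paper's own generalization (Theorem \ref{generalize logarithm law for unbounded flow}, whose $i=1$ case contains this statement, since a nontrivial unipotent one-parameter subgroup is unbounded) gets the lower bound by a completely different route. There the sets $\{\Lambda:\lambda_i(\Lambda)\le e^{-t}\}$ are observed to be spherical, the Kelmer--Yu logarithm law for hitting times of spherical shrinking targets (Theorem \ref{spherical log law}) is invoked as a black box, and combining it with Corollary \ref{non-dynamical logarithm law} gives the limsup lower bound directly from the first-hitting-time inequality. That route buys exactly what your sketch is missing: all the quasi-independence/effective-mixing analysis is packaged inside the cited shrinking-target theorem, so no correlation estimate for the unipotent flow has to be verified by hand. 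If you want a self-contained argument in the spirit of your proposal, you would either have to carry out the second-moment/Rogers-formula argument of \cite{AM09} or supply the effective mixing bookkeeping in full; otherwise the cleanest fix is to follow the paper and reduce to Theorem \ref{spherical log law}.
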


We shall generalize this theorem to higher $\la_i$'s:

\begin{theorem}\label{generalize logarithm law for unbounded flow}
Let $(g_t)_{t\in \R}$ be an unbounded one-parameter subgroup of $\SL(d,\R)$ and $X:=\SL(d,\R)/\SL(d,\Z)$. For $\mu$-a.e. $\La$ in $X$, we have
\begin{equation*}
    \limsup_{t\to \infty}\frac{-\log \la_i(h_t\La)}{\log t}=\frac{1}{di}.
\end{equation*}
\end{theorem}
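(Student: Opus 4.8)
The plan is a Borel--Cantelli argument based on the fact, recorded above, that $\Delta_i=-\log\la_i$ is $di$ distance-like on $X$ (Theorem \ref{di distance linear}) and uniformly continuous (as observed from inequality \ref{eq:ineq}). Fix a small $\e\in(0,\tfrac1{di})$ and, for $n\in\N$, put $A_n^{\pm}:=\{x\in X:\Delta_i(x)\ge(\tfrac1{di}\pm\e)\log n\}$; the distance-like estimate gives $\mu(A_n^{+})\asymp n^{-1-di\e}$, so $\sum_n\mu(A_n^{+})<\infty$, whereas $\mu(A_n^{-})\asymp n^{-1+di\e}$, so $\sum_n\mu(A_n^{-})=\infty$.

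\emph{Upper bound $\limsup\le\tfrac1{di}$.} Since each $g_n$ preserves $\mu$, $\sum_n\mu(g_{-n}A_n^{+})=\sum_n\mu(A_n^{+})<\infty$, so by the convergence half of Borel--Cantelli, for $\mu$-a.e.\ $x$ one has $\Delta_i(g_n x)<(\tfrac1{di}+\e)\log n$ for all large $n$. For real $t$ write $g_t x=g_{t-\lfloor t\rfloor}(g_{\lfloor t\rfloor}x)$; applying inequality \ref{eq:ineq} with $b,c$ ranging over $\{g_s:|s|\le1\}$ gives $|\Delta_i(g_t x)-\Delta_i(g_{\lfloor t\rfloor}x)|\le C_0:=\max_{|s|\le1}\log\|g_s\|_{op}<\infty$. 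Dividing by $\log t$ kills this bounded error in the limit, so $\limsup_{t\to\infty}\Delta_i(g_t x)/\log t\le\tfrac1{di}+\e$; letting $\e\downarrow0$ along a sequence yields $\limsup_t\Delta_i(g_t x)/\log t\le\tfrac1{di}$ for a.e.\ $x$. This half uses only measure-preservation and holds for every one-parameter subgroup.

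\emph{Lower bound $\limsup\ge\tfrac1{di}$.} It suffices to show that for a.e.\ $x$ one has $g_n x\in A_n^{-}$ for infinitely many $n$, since then $\limsup_t\Delta_i(g_t x)/\log t\ge\limsup_n\Delta_i(g_n x)/\log n\ge\tfrac1{di}-\e$, and $\e\downarrow0$ finishes. Because $\sum_n\mu(A_n^{-})=\infty$, this will follow from the Kochen--Stone (quasi-independent) form of Borel--Cantelli once we have a decorrelation bound $\mu(g_{-m}A_m^{-}\cap g_{-n}A_n^{-})\le\mu(A_m^{-})\mu(A_n^{-})+R(m,n)$ with $\sum_{m<n\le N}R(m,n)=o\bigl((\sum_{n\le N}\mu(A_n^{-}))^2\bigr)$, together with an ergodicity argument to upgrade positivity to full measure: $(g_t)$ is unbounded, hence mixing by Howe--Moore, so $g_1$ is ergodic, and the $\limsup$ event is $g_1$-invariant modulo a null set because the thresholds $(\tfrac1{di}-\e)\log n$ vary negligibly with $n$. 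To obtain the decorrelation one smooths the sets: since the level sets $\{\Delta_i=c\}$ are $\mu$-null (the proposition asserting that level sets of $\la_i$ have measure zero) and $\Delta_i$ is uniformly continuous, one may choose functions $\psi_n\ge\mathbf{1}_{A_n^{-}}$, smoothed at a scale $\rho_n\downarrow0$, with $\mu(\psi_n)=(1+o(1))\mu(A_n^{-})$ and Sobolev norms $\lesssim\rho_n^{-p}$; unitarity of the $g_t$-action and decay of matrix coefficients then give $\mu(g_{-m}A_m^{-}\cap g_{-n}A_n^{-})\le\langle g_{n-m}\psi_n,\psi_m\rangle=\mu(\psi_n)\mu(\psi_m)+O\bigl(\theta(n-m)\,\rho_n^{-p}\rho_m^{-p}\bigr)$, where $\theta(\ell)$ is the rate of mixing at time lag $\ell$. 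For $d\ge3$, property (T) of $\SL(d,\R)$ makes $\theta(\ell)$ decay like $e^{-\kappa\,\mathrm{dist}_G(e,g_\ell)}$ --- exponentially in $\ell$ when $(g_t)$ has a nontrivial $\R$-diagonalizable part, polynomially in $\ell$ when $(g_t)$ is purely unipotent; for $d=2$ the spectral gap for $\SL(2,\Z)\subset\SL(2,\R)$ gives the same rates. Taking $\rho_n\to0$ slowly enough (so that $\mu(\psi_n)\asymp\mu(A_n^{-})$ by uniform continuity and the distance-like estimate, while $\rho_n^{-p}$ stays negligible against the decay in $n-m$) makes $R$ summable in the required sense.

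The hard part will be this last balancing in the purely unipotent case, where $\theta$ decays only polynomially: the smoothing scale $\rho_n$, the Sobolev exponent $p$, and the mixing exponent must be tuned together so that $\sum_{m<n}R(m,n)$ remains $o\bigl((\sum\mu(A_n^{-}))^2\bigr)$. This is exactly the situation handled by the effective/maximal-inequality machinery of \cite{AM09}, and the point I would verify carefully is that that machinery depends on the target function only through (i) the distance-like tail bound, supplied here by Theorem \ref{di distance linear}, and (ii) a maximal inequality for $\sup_{s\in[0,1]}\Delta_i(g_s\,\cdot\,)$, which follows from (i) and uniform continuity by a union bound over a finite mesh in $[0,1]$ --- so it transfers from $-\log\la_1$ to $\Delta_i$ verbatim. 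A minor technical point is that the smoothing must not spoil the lower estimate $\mu(A_n^{-})\gtrsim n^{-1+di\e}$, which is again ensured by the slow choice of $\rho_n$ and uniform continuity of $\Delta_i$.
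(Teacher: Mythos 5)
Your upper bound is fine and is essentially the paper's argument (Borel--Cantelli from Theorem \ref{di distance linear} plus measure preservation, with the discretization error controlled by uniform continuity of $\Delta_i$). The problem is the lower bound: what you have written is a strategy, not a proof, and the step you yourself flag as ``the hard part'' is exactly the step that is missing. The Kochen--Stone route requires the decorrelation bound $\mu(g_{-m}A_m^{-}\cap g_{-n}A_n^{-})\le\mu(A_m^{-})\mu(A_n^{-})+R(m,n)$ with $\sum_{m<n\le N}R(m,n)=o\bigl((\sum_{n\le N}\mu(A_n^{-}))^2\bigr)$, and you never establish it. Note how tight this is: $\mu(A_n^{-})\asymp n^{-1+di\e}$, so $\bigl(\sum_{n\le N}\mu(A_n^{-})\bigr)^2\asymp N^{2di\e}$ with $\e$ arbitrarily small, while the sets $A_n^{-}$ are thin cusp neighborhoods whose smoothed indicators have Sobolev norms blowing up as the sets shrink; with only polynomial decay of correlations for a unipotent $(g_t)$ there is no demonstration that the scales $\rho_n$, the Sobolev exponent $p$, and the mixing rate can be balanced so that the error stays below a quantity growing only like $N^{2di\e}$. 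Deferring this to the ``machinery of \cite{AM09} transferring verbatim'' does not close the gap: Athreya--Margulis's argument for $\la_1$ rests on second-moment (Rogers/Siegel-transform) estimates for the lattice-point counting function attached to $\la_1$, not merely on the tail bound plus uniform continuity, and the analogous second-moment input for primitive $i$-tuples (a variance bound for $\hat f^i$) is precisely what you would have to prove and do not.

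The paper takes a different and much shorter route for the lower bound that avoids any quasi-independence estimate: it applies the Kelmer--Yu shrinking-target theorem (Theorem \ref{spherical log law}) to the spherical targets $B_t=\{\La:\la_i(\La)\le e^{-t}\}$, which for an unbounded discrete-time flow gives $\log(\text{first hitting time of }B_t)\sim-\log\mu(B_t)$ a.e.; combining this with Corollary \ref{non-dynamical logarithm law} (i.e.\ $-\log\mu(B_t)\sim di\,t$) and the trivial inequality $-\log\la_i(g_{m(t)}\La)\ge t$ at the hitting time $m(t)$ yields $\limsup\ge\frac1{di}$ directly. The only inputs are the measure asymptotics of the targets (your Theorem \ref{di distance linear}) and sphericality of $B_t$, so all the mixing-rate bookkeeping you outline is unnecessary. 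If you want to keep your Borel--Cantelli framework, you must either prove the required correlation/second-moment estimate for the sets $A_n^{-}$ (nontrivial, especially in the purely unipotent case) or replace that part of the argument with a citation to a dynamical Borel--Cantelli/shrinking-target theorem such as Kelmer--Yu, which is what the paper does.
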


To prove this theorem, we first observe that by Borel-Cantelli Lemma, the upper bound holds for all flows:

\begin{lemma}[The upper bound]
For $\mu$-almost every $\La \in X$, $1\le i \le d-1$, and any one parameter subgroup $(h_t)_{t\in \R}$ of $G=\SL(d,\R)$,
\begin{equation*}
    \limsup_{t\to \infty}\frac{-\log \la_i(h_t\La)}{\log t}\le \frac{1}{di}.
\end{equation*}
\end{lemma}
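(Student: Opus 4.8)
The plan is a Borel--Cantelli argument along the integer times $t=n\in\N$, using the measure estimate of Theorem~\ref{di distance linear}, followed by an elementary interpolation to all real times via the operator-norm bound of Lemma~\ref{bound}.

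Fix $\e>0$ and for $n\in\N$ set $\de_n:=n^{-(1+\e)/(di)}$, so that $\de_n\to 0$. Because the Haar measure $\mu$ is $G$-invariant, hence invariant under the single map $h_n$, we have for all large $n$
\[
\mu\bigl(\{\La\in\mathcal L:\la_i(h_n\La)\le\de_n\}\bigr)
=\mu\bigl(\{\La\in\mathcal L:\la_i(\La)\le\de_n\}\bigr)
\le C_d'\,\de_n^{\,di}=C_d'\,n^{-(1+\e)},
\]
where the inequality is Theorem~\ref{di distance linear}, applicable since $1\le i\le d-1$. As $\sum_{n\ge 1}n^{-(1+\e)}<\infty$, the Borel--Cantelli lemma produces a $\mu$-conull set of $\La$ for each of which there is $n_0=n_0(\La,\e)$ with $\la_i(h_n\La)>\de_n$, i.e. $-\log\la_i(h_n\La)<\tfrac{1+\e}{di}\log n$, for all $n\ge n_0$.

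To pass from integers to all $t$, note that since $s\mapsto h_s$ is continuous, $M:=\sup_{s\in[-1,0]}\|h_s\|_{op}<\infty$. Given $t\ge n_0$, write $t=n+s$ with $n=\lfloor t\rfloor\ge n_0$ and $s\in[0,1)$; applying Lemma~\ref{bound} with $b=h_{-s}$ to the unimodular lattice $h_t\La$ and using $h_{-s}h_t\La=h_n\La$ gives $\la_i(h_n\La)\le\|h_{-s}\|_{op}\,\la_i(h_t\La)\le M\,\la_i(h_t\La)$, hence $\la_i(h_t\La)\ge M^{-1}\de_n$. Therefore $-\log\la_i(h_t\La)\le\log M+\tfrac{1+\e}{di}\log n\le\log M+\tfrac{1+\e}{di}\log t$, and dividing by $\log t$ and letting $t\to\infty$ yields $\limsup_{t\to\infty}\frac{-\log\la_i(h_t\La)}{\log t}\le\frac{1+\e}{di}$ on this conull set. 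Intersecting over a sequence $\e_k\downarrow 0$ gives the claim for $\mu$-a.e.\ $\La$ and every one-parameter subgroup.

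No step here is a genuine obstacle; the argument is entirely soft once Theorem~\ref{di distance linear} is in hand. The only points needing care are the use of $G$-invariance of $\mu$ to strip off the time-$n$ map before invoking the measure estimate, and the interpolation, which relies only on local boundedness of $s\mapsto\|h_{-s}\|_{op}$ on a compact interval together with Lemma~\ref{bound}; in particular the argument works uniformly over all (bounded or unbounded) one-parameter subgroups, which is exactly what the lemma asserts.
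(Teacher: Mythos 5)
Your proposal is correct and follows the same basic strategy as the paper: apply the upper bound of Theorem~\ref{di distance linear} together with the $\mu$-invariance of the time-$t$ maps, sum, and invoke Borel--Cantelli. The difference is in how the passage from discrete to continuous time is handled. The paper fixes an $\e>0$, takes an \emph{arbitrary} sequence $t_k\to\infty$, runs Borel--Cantelli along that sequence with thresholds $r_k=(\tfrac{1}{di}+\e)\log k$, and then concludes the continuous-time statement with the phrase ``since $t_k\to\infty$ is arbitrary''; strictly speaking the exceptional null set there depends on the chosen sequence (and the comparison is against $\log k$ rather than $\log t_k$), so the continuous-time limsup is not literally obtained by intersecting over the uncountably many sequences. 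Your version discretizes along the integers only and then interpolates: for $t=n+s$ with $s\in[0,1)$ you write $h_n\La=h_{-s}h_t\La$ and use Lemma~\ref{bound} with the locally bounded quantity $\sup_{s\in[-1,0]}\|h_s\|_{op}$ to transfer the bound at time $n$ to time $t$ at the cost of an additive constant $\log M$, which vanishes after dividing by $\log t$. This makes the continuous-time conclusion completely explicit and only uses countably many applications of Borel--Cantelli (one per $\e_k$), so your argument is, if anything, a cleaner and more airtight rendering of the paper's proof; it buys rigor at the price of the extra (but elementary) interpolation step, while the paper's phrasing is shorter but leaves that step implicit.
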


\begin{proof}
For any $\e>0$, and for $k\ge 1$, let $r_k=(\frac{1}{di}+\e)\log k$ and let $t_k$ be any sequence going to $\infty$ as $k\to \infty$, we have by Theorem \ref{di distance linear} and the fact that $u_{t_k}$ is measure-preserving that
\begin{equation*}
    \mu(\{\Lambda \in X: \lambda_i(u_{t_k}\Lambda) \le e^{-r_k} \}) \le C_d' (e^{r_k})^{di},
\end{equation*}
which is equivalent to
\begin{equation*}
    \mu(\{\Lambda \in X: -\log \la_i(u_{t_k}\La) \ge r_k \}) \le C_d' \frac{1}{k^{1+di\e}}.
\end{equation*}
Since the summatin on the right hand side over $k$ is finite, by Borel-Cantelli Lemma, we have 
\begin{equation*}
    \mu(\limsup_{k\to \infty}\{\Lambda \in X: -\log \la_i(u_{t_k}\La) \ge r_k \})=0.
\end{equation*}
Taking the complement, this means
\begin{equation*}
    \mu(\cup_{N} \cap_{k\ge N}\{\Lambda \in X: -\log \la_i(u_{t_k}\La) < r_k \})=\mu(\liminf_{k\to \infty}\{\Lambda \in X: -\log \la_i(u_{t_k}\La) < r_k \})=1.
\end{equation*}
In other words, for $\mu$-almost every $\La \in X$, there exists $N$ such that $k\ge N$ implies 
\begin{equation*}
    -\log \la_i(u_{t_k}\La) < r_k:=(\frac{1}{di}+\e)\log(k)
\end{equation*}
Since $t_k\to \infty$ is arbitrary, we have
\begin{equation*}
    \limsup_{t\to \infty}\frac{-\log \la_i(h_t\La)}{\log t}\le \frac{1}{di}.
\end{equation*}
for $\mu$-almost all $\La$.
\end{proof}

\vspace{5mm}
To show the lower bound, we shall use a logarithm law for hitting time of unbounded flow against the spherical shrinking target due to Kelmer and Yu \cite{Kelmer2017ShrinkingTP}.

\begin{theorem}[Special Case of Theorem 1.1, \cite{Kelmer2017ShrinkingTP}]\label{spherical log law}
Let $\{B_t\}_{t>0}$ denote a monotone family of spherical (meaning each set $B_t$ is invariant under the left action of $K=\text{SO}(d,\R)$) shrinking (meaning $B_t\supset B_s$ for $t\ge s$ and $\lim_{t\to \infty}\mu(B_t))$ targets in $X := G/\Ga:=\SL(d,\R)/\SL(d,\Z)$. Let $\{g_m\}_{m\in \Z}$
denote an unbounded discrete time flow on $X$ . Then for a.e. $\La \in X$
\begin{equation}
    \lim_{t\to \infty}\frac{\log(\min\{m\in \N:g_m\La\in B_t\})}{-\log(\mu(B_t))}=1
\end{equation}
\end{theorem}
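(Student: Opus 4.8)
The plan is to run a two-sided Borel--Cantelli scheme on the hitting time
$\tau_t(\La):=\min\{m\in\N : g_m\La\in B_t\}$, importing quantitative equidistribution from the spectral gap of $\SL(d,\Z)$ in $\SL(d,\R)$. First I would discretize in $t$. Since $t\mapsto\mu(B_t)$ is monotone, choose $t_j$ minimal with $\mu(B_{t_j})\le 2^{-j}$ (separating off the at most countably many jump times), and put $B_j:=B_{t_j}$, $\beta_j:=\mu(B_j)$, so that $\beta_j$ decays geometrically; this geometric decay is precisely what will make the Borel--Cantelli series converge. Monotonicity of $\{B_t\}$ sandwiches both $\tau_t(\La)$ and $-\log\mu(B_t)$ between the values at consecutive indices, so it suffices to prove $\log\tau_j(\La)/(-\log\beta_j)\to 1$ for $\mu$-a.e.\ $\La$, and then interpolate back to all $t$.

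For the lower bound $\liminf\ge 1$ I would use only measure preservation. Fix $\e>0$, set $T_j:=\lfloor\beta_j^{-(1-\e)}\rfloor$ and $E_j:=\{\La : g_m\La\in B_j\text{ for some }1\le m\le T_j\}$. Then $\mu(E_j)\le\sum_{m=1}^{T_j}\mu(g_{-m}B_j)=T_j\beta_j\le\beta_j^{\e}$, which is summable by the geometric decay of $\beta_j$; Borel--Cantelli gives, for a.e.\ $\La$ and all large $j$, $\La\notin E_j$, i.e.\ $\tau_j(\La)>T_j$, hence $\liminf_j\log\tau_j(\La)/(-\log\beta_j)\ge 1-\e$, and $\e\downarrow 0$ closes this half.

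For the upper bound $\limsup\le 1$ — the substantive half — fix $\e>0$, set $N_j:=\lceil\beta_j^{-(1+\e)}\rceil$, and consider the visit count $S_j(\La):=\sum_{m=1}^{N_j}\mathbf{1}_{B_j}(g_m\La)$, with mean $\E[S_j]=N_j\beta_j\ge\beta_j^{-\e}\to\infty$. It suffices that $\mu(\{S_j=0\})$ be summable, for then a.e.\ $\La$ eventually has $\tau_j(\La)\le N_j$ and $\limsup\le 1+\e$. By Paley--Zygmund, $\mu(\{S_j=0\})\le\operatorname{Var}(S_j)/\E[S_j]^2$; expanding $\E[S_j^2]=\sum_{|\ell|<N_j}(N_j-|\ell|)C_j(\ell)$ with $C_j(\ell):=\int_X\mathbf{1}_{B_j}(x)\mathbf{1}_{B_j}(g_\ell x)\,d\mu$ (using $g_m=g_1^m$ and measure preservation to reduce $(m,n)$ to $\ell=n-m$), the whole matter reduces to the correlation estimate $C_j(\ell)\le\beta_j^2+\varepsilon_j(\ell)$ with $\sum_{1\le|\ell|<N_j}\varepsilon_j(\ell)=o(N_j\beta_j)$ and a polynomial-in-$\beta_j$ rate for summability. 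This is where both hypotheses enter: since $\{g_m\}$ is \emph{unbounded}, $g_\ell\to\infty$ in $G$ as $|\ell|\to\infty$, so the matrix coefficient $\langle\mathbf{1}_{B_j}-\beta_j,\ g_\ell(\mathbf{1}_{B_j}-\beta_j)\rangle$ in $L^2_0(X)$ decays with a rate governed by the Cartan projection of $g_\ell$; and since $B_j$ is \emph{spherical} (left $K$-invariant), we may sandwich it between nested $K$-invariant smooth-ish spherical sets whose measures differ by a factor $1+O(\eta_j)$ — the thin spherical annulus having measure of lower order than $\beta_j$ by the cusp/Siegel-domain geometry — so the replacing test function is $K$-invariant with Sobolev norm polynomial in $\beta_j,\eta_j$, and the quantitative mixing (equivalently, an effective mean ergodic theorem) supplied by the spectral gap yields the required power-saving decay. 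Feeding this back gives $\operatorname{Var}(S_j)=o(\E[S_j]^2)$ with a power saving, hence summability of $\mu(\{S_j=0\})$.

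I expect the correlation/mollification balance to be the main obstacle: one must pick the smoothing scale $\eta_j$ small enough that the boundary error stays $\ll\beta_j$ yet large enough that the Sobolev-norm growth of the approximated indicator does not swallow the polynomial mixing gain, all while $\beta_j\to 0$ — so the mixing exponent, the Sobolev-norm blow-up, and the geometric decay rate of $\beta_j$ must be tracked together and shown to close. Once that estimate is secured, the second-moment bookkeeping, the Borel--Cantelli step, and undoing the discretization via monotonicity of $\{B_t\}$ are routine, and letting $\e\downarrow 0$ in both halves yields $\lim_{t\to\infty}\log\tau_t(\La)/(-\log\mu(B_t))=1$ for $\mu$-a.e.\ $\La$.
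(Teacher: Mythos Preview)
The paper does not prove this statement: it is quoted as a special case of Theorem~1.1 in Kelmer--Yu and then invoked as a black box in the proof of Theorem~\ref{generalize logarithm law for unbounded flow}. There is therefore no proof in the paper to compare your argument against.

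That said, your outline is a faithful sketch of the strategy in the cited reference: the easy direction via a first-moment union bound and Borel--Cantelli, and the hard direction via a second-moment (Paley--Zygmund) argument driven by quantitative decay of matrix coefficients coming from the spectral gap. One remark on the hard half: in the Kelmer--Yu argument the spherical hypothesis is exploited more sharply than through Sobolev mollification. Since the targets are $K$-invariant, the relevant correlations $\langle \mathbf{1}_{B_j}-\beta_j,\, g_\ell(\mathbf{1}_{B_j}-\beta_j)\rangle$ are $K$-bi-invariant matrix coefficients in $L^2_0(X)$, and these are controlled directly by (a power of) the Harish--Chandra $\Xi$-function, with $\|\mathbf{1}_{B_j}-\beta_j\|_2^2\le\beta_j$ supplying the needed factor of $\beta_j$. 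This gives the variance bound for the raw indicator functions and bypasses entirely the smoothing/boundary-regularity balancing act you correctly flag as the main obstacle in your approach. Your mollification route would also close, but it demands an extra geometric input --- control on the measure of thin $K$-invariant shells near $\partial B_j$ --- that the $\Xi$-function argument avoids.
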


The quantity $\min\{m\in \N:g_m.x\in B_t\}$ is often called the first \textit{hitting time} with respect to the flow $\{g_m\}$ and the target set $B_t$.

\begin{proof}[Proof of \ref{generalize logarithm law for unbounded flow} (the lower bound)]
We will take the shrinking targets as
\begin{equation*}
    B_t:=\{\La: \lambda_i(g_m \La)\le e^{-t}\}, t\ge 0.
\end{equation*}
These sets are clearly spherical, namely $SO(d,\R)$-invariant since $\la_i$. So by Theorem \ref{spherical log law}, 
\begin{equation}
    \lim_{t\to \infty}\frac{\log \min\{m\in \N:\lambda_i(g_m \La)\le e^{-t}\}}{-\log \mu(\{\La: \lambda_i(g_m \La)\le e^{-t}\})}=1.\footnote{The set $\{m\in \N:\lambda_i(g_m \La)\le e^{-t}\}$ is non-empty because $(g_m)$-action is ergodic by Howe-Moore theorem, and thus almost every $(g_m)$-orbit is dense. }
\end{equation}

By Corollary \ref{non-dynamical logarithm law},
    \begin{equation}
       \lim_{t\to \infty}\frac{-\log\mu \left(\left\{\Lambda \in \mathcal{L}: \lambda_i(\Lambda) \le e^{-t} \right\}\right)}{t} = di
    \end{equation}
Therefore by taking the product and reciprocal,
\begin{equation}\label{product of two limits}
     \lim_{t\to \infty}\frac{t}{\log \min\{m\in \N:\lambda_i(g_m \La)\le e^{-t}\}}=\frac{1}{di}.
\end{equation}
But observe that 
\begin{equation}
    \la_i\left( g_{\min\{m\in \N:\lambda_i(g_m \La)\le e^{-t}\}} \La \right) \le e^{-t},
\end{equation}
and therefore 
\begin{equation}\label{hitting time ineq}
   -\log \la_i\left( g_{\min\{m\in \N:\lambda_i(g_m \La)\le e^{-t}\}} \La \right) \ge t,
\end{equation}
It follows that 
\begin{align*}
& \limsup_{t\to \infty}\frac{-\log \la_i(h_t\La)}{\log t}\\
\ge & \limsup_{t\to \infty}\frac{-\log \la_i\left( g_{\min\{m\in \N:\lambda_i(g_m \La)\le e^{-t}\}} \La \right)}{\log \min\{m\in \N:\lambda_i(g_m \La)\le e^{-t}\}} \tag{By the of limsup. The subscript can be considered as a subsequence.}\\
\ge & \limsup_{t\to \infty} \frac{t}{\log \min\{m\in \N:\lambda_i(g_m \La)\le e^{-t}\}}\tag{By the hitting time inequality \ref{hitting time ineq}}\\
= &\frac{1}{di}. \tag{By the limit \ref{product of two limits}}
\end{align*}
This finishes the proof of lower bound and thus the whole logarithm law theorem.
\end{proof}

\begin{appendices}
    \section{Computation of the coefficient $c_{d,k}$ for the generalized Siegel's formula.}
\end{appendices}

The computation of the coefficient in the generalized Siegel's formula requires the Poisson summation formula. Let us first recall the notion of admissible functions and Poisson summation formula from Fourier analysis:

\begin{definition}
A function $f:\R^d \to \R$ is called \textit{admissible} if there exist constants $c_1,c_2>0$ such that both $|f(x)|$ and $|\hat{f}(x)|$ are bounded by $\frac{c_1}{(1+\|x\|)^{d+c_2}}$, where $f\hat{f}(t):=\int_{\R^d}f(x)e^{2\pi i\langle x,t \rangle} dx$ is the Fourier transform of $f$.
\end{definition}

\begin{theorem}[Poisson Summation Formula]
Given any unimodular lattice $\Lambda \in \R^d$, a vector $v$ and an admissible function $f:\R^d\to \R$, we have
$$\sum_{x\in \Lambda}f(x+v)=\sum_{w \in \Lambda^*}e^{-2\pi i \langle v,w \rangle}\hat{f}(t),$$
where $\Lambda^*$ is the dual lattice of $\Lambda$, cf. \ref{dual lattice}.
\end{theorem}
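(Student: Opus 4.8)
The plan is to prove the identity first for the standard lattice $\Z^d$ by the classical periodization argument, and then to deduce the general case by a volume-preserving linear change of variables, invoking Proposition~\ref{dual is same as transpose inverse} to keep track of the dual.

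First I would fix an admissible $f$ and form the periodization $F(x):=\sum_{n\in\Z^d}f(x+n)$. The decay hypothesis $|f(x)|\le c_1(1+\|x\|)^{-d-c_2}$ makes this sum converge absolutely and uniformly on compact sets, so $F$ is a continuous $\Z^d$-periodic function. Computing its Fourier coefficients with the paper's sign convention, and interchanging sum and integral (justified by the uniform convergence), one finds $c_m=\int_{[0,1)^d}F(x)e^{2\pi i\langle m,x\rangle}\,dx=\hat f(m)$. Since $\hat f$ is also admissible, $\sum_{m\in\Z^d}|\hat f(m)|<\infty$, so the Fourier series of $F$ converges absolutely and uniformly; I would then cite a standard fact (Fej\'er summability, or uniqueness of Fourier coefficients for continuous periodic functions) to conclude that this series equals $F$ pointwise. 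Evaluating at $x=v$ yields the Poisson summation formula for $\Z^d$: $\sum_{n\in\Z^d}f(n+v)=\sum_{m\in\Z^d}\hat f(m)e^{-2\pi i\langle m,v\rangle}$.

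For a general unimodular $\Lambda$, I would write $\Lambda=g\Z^d$ with $g\in\SL(d,\R)$, so that $\Lambda^*={}^tg^{-1}\Z^d$ by Proposition~\ref{dual is same as transpose inverse}, and apply the $\Z^d$-case to $h:=f\circ g$ at the shifted point $g^{-1}v$. Two bookkeeping points make this go through: first, since $g\in\SL(d,\R)$ one has $\|gx\|\asymp\|x\|$, so $h$ (and, via the change of variable $y=gx$ with $\det g=1$, also $\hat h$) is again admissible; second, that same change of variable gives $\hat h(t)=\hat f({}^tg^{-1}t)$. Combining these with the adjoint identity $\langle m,g^{-1}v\rangle=\langle{}^tg^{-1}m,v\rangle$ and reindexing by $w:={}^tg^{-1}m$, which runs over $\Lambda^*$ exactly as $m$ runs over $\Z^d$, converts the $\Z^d$ formula into $\sum_{x\in\Lambda}f(x+v)=\sum_{w\in\Lambda^*}\hat f(w)e^{-2\pi i\langle v,w\rangle}$, as desired.

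The only genuine obstacle is the analytic input in the first step --- namely, that the periodization is represented by its Fourier series at every point, not merely in $L^2$ --- and this is precisely what the admissibility hypothesis (polynomial decay of both $f$ and $\hat f$ past the critical exponent $d$) is tailored to supply. Everything after that is elementary linear algebra: the transpose-inverse description of the dual lattice together with the unimodularity of $g$.
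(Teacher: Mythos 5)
Your proposal is correct, and in fact the paper itself offers no proof of this statement at all: it is recalled as classical background from Fourier analysis before the appendix computation of $c_{d,k}$, so there is nothing internal to compare against. Your argument is the standard one and it is sound: the periodization $F(x)=\sum_{n\in\Z^d}f(x+n)$ converges absolutely and uniformly on compacta by the decay hypothesis on $f$, its Fourier coefficients are $\hat f(m)$ with the paper's sign convention $\hat f(t)=\int_{\R^d}f(x)e^{2\pi i\langle x,t\rangle}dx$, the decay of $\hat f$ gives absolute and uniform convergence of the Fourier series, and a standard summability or uniqueness argument identifies the series with the continuous function $F$ pointwise, yielding the $\Z^d$ case with exactly the sign $e^{-2\pi i\langle m,v\rangle}$ appearing in the statement. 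The reduction of the general unimodular case to $\Z^d$ via $\Lambda=g\Z^d$, $h=f\circ g$, $\hat h(t)=\hat f({}^tg^{-1}t)$, and the reindexing $w={}^tg^{-1}m$ is also right, and it is a nice touch that you route the bookkeeping through Proposition \ref{dual is same as transpose inverse}, which is precisely how the paper later uses duality (e.g.\ $\hat F(g^*)$ in the appendix). One cosmetic remark: the displayed formula in the statement has a typo, $\hat f(t)$ should read $\hat f(w)$, and your proof implicitly supplies the corrected form; you might also note explicitly that admissibility of $h$ holds with constants depending on $g$, which is harmless since $g$ is fixed.
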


\begin{proposition}
As in the proof of Theorem \ref{generalizedsiegel}, let $\{e_1,\dots,e_d\}$ be the canonimcal basis of $\R^d$. For $G=\SL(d,\R)$ and $\Gamma=\SL(d,\Z)$ and the $k$-tuple $(e_1,\dots,e_k)$, be , let 
\begin{align*}
     G_k:&=\{g\in G: g.e_i=e_i,\forall 1 \le i \le k\}, \\
     \Gamma_k:&=\{g\in \Gamma: g.e_i=e_i,\forall 1 \le i \le k\}.
\end{align*}
be the stabilizer subgroup of $(e_1,\dots,e_k)$ in $G$ and $\Gamma$, respectively. Let $dg$ denote the Haar measure on $G$ (scaled as above) and $dg_k:=d\mu_{G_k}(g_k)$,$d(g\Gamma):=d\mu_{G/\Gamma}(g\Gamma)$, $d\mu_{G/\Gamma_k}(g\Gamma_k)$ denoted the induced Haar measures on $G_k, G/\Gamma$ and $G/\Gamma_k$ respectively. Then,
$$\mu_{G_k/\Gamma_k}(G_k/\Gamma_k)=\frac{1}{\zeta(d-k+1)\cdots \zeta(d)}$$
\end{proposition}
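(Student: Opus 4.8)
The plan is to prove the formula by induction on $k$, realizing $c_{d,k}:=\mu_{G_k/\Gamma_k}(G_k/\Gamma_k)$ through a relative Siegel formula attached to the one-step flag $G_k\subset G_{k-1}$ and feeding in the classical Siegel formula (itself a consequence of Poisson summation) in dimension $d-k+1$ at each step. The base case is $k=0$: since $G_0=G$, $\Gamma_0=\Gamma$ and $\mu_{G_0/\Gamma_0}=\mu$ is the probability Haar measure, $c_{d,0}=1$, which already matches the (empty) product. The inductive claim I will establish is $c_{d,k}=c_{d,k-1}/\zeta(d-k+1)$ for $1\le k\le d-1$; iterating it gives $c_{d,k}=1/\bigl(\zeta(d)\zeta(d-1)\cdots\zeta(d-k+1)\bigr)$, which is exactly the stated value.

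First I would record the explicit shape of the groups. Writing an element of $G_{k-1}$ with its first $k-1$ columns equal to $e_1,\dots,e_{k-1}$ shows that $G_{k-1}\cong M_{k-1,\,d-k+1}(\R)\rtimes \SL(d-k+1,\R)$ (block unipotent times block $\SL$), and $\Gamma_{k-1}$ is its subgroup of integer points; hence $G_{k-1}/\Gamma_{k-1}$ is a fibre bundle over $X_{d-k+1}:=\SL(d-k+1,\R)/\SL(d-k+1,\Z)$ with fibre the torus $M_{k-1,\,d-k+1}(\R)/M_{k-1,\,d-k+1}(\Z)$. Concretely, $g\Gamma_{k-1}$ corresponds to the unimodular lattice $\Lambda=g\Z^d$, which contains $e_1,\dots,e_{k-1}$ as a primitive tuple, and the bundle map is $\Lambda\mapsto\bar\Lambda:=\Lambda/\langle e_1,\dots,e_{k-1}\rangle_{\Z}$, a unimodular lattice in $W:=\R^d/\langle e_1,\dots,e_{k-1}\rangle_{\R}\cong\R^{d-k+1}$. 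By $G_{k-1}$-invariance and uniqueness of Haar measure, $\mu_{G_{k-1}/\Gamma_{k-1}}$ splits as a product of a Haar measure on the torus fibre with a scalar multiple of the probability Haar measure on $X_{d-k+1}$; moreover the total mass satisfies $\mu_{G_{k-1}/\Gamma_{k-1}}(G_{k-1}/\Gamma_{k-1})=c_{d,k-1}$ by the defining construction in the proof of Theorem \ref{generalizedsiegel}.

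Next I would run the quotient-integral argument of that proof one level lower, for the pair $G_k\subset G_{k-1}$, so the relevant extra vector is $e_k$: the $G_{k-1}$-orbit of $e_k$ is $\R^d$ minus a proper subspace, carrying the Lebesgue measure inherited from the splitting $\R^{dk}=\R^{d(k-1)}\times\R^d$, while $\Gamma_{k-1}/\Gamma_k$ is identified with $O(\Lambda):=\{v\in\Lambda:(e_1,\dots,e_{k-1},v)\text{ is primitive in }\Lambda\}$. Associativity of the quotient integral formula then yields, for suitable $\phi$ on $\R^d$,
\[
\int_{G_{k-1}/\Gamma_{k-1}}\Bigl(\sum_{v\in O(\Lambda)}\phi(v)\Bigr)\,d\mu_{G_{k-1}/\Gamma_{k-1}}(\Lambda)=c_{d,k}\int_{\R^d}\phi(x)\,dx .
\]
Since $O(\Lambda)$ is precisely the preimage in $\Lambda$ of $P(\bar\Lambda)$, the left side is $\int\sum_{\bar v\in P(\bar\Lambda)}\bigl(\sum_{v\mapsto\bar v}\phi(v)\bigr)\,d\mu_{G_{k-1}/\Gamma_{k-1}}$. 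Integrating first over the torus fibre, where the preimage of a fixed $\bar v$ sweeps a translate of $\langle e_1,\dots,e_{k-1}\rangle_{\Z}$ through a fundamental domain, and invoking the Poisson summation formula in the elementary form that averaging $\sum_{m\in\Z^{k-1}}\phi(x+s+m)$ over $s$ in the torus equals the torus mass times $\int_{\R^{k-1}}\phi(x+u)\,du$, collapses the inner sum to $\int_E\phi$ with $E=\langle e_1,\dots,e_{k-1}\rangle_{\R}$; what remains is $\int_{X_{d-k+1}}\sum_{\bar v\in P(\bar\Lambda)}\psi(\bar v)\,d\tilde\mu(\bar\Lambda)$, where $\psi(y)=\int_E\phi(y+w)\,dw$ is a function on $W\cong\R^{d-k+1}$ and $\tilde\mu$ is the induced $\SL(d-k+1,\R)$-invariant measure on $X_{d-k+1}$, whose total mass combines with the torus mass to $c_{d,k-1}$. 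Applying the classical Siegel formula in dimension $d-k+1$ (which holds with constant $\zeta(d-k+1)^{-1}$ for the probability Haar measure) gives $\frac{c_{d,k-1}}{\zeta(d-k+1)}\int_W\psi=\frac{c_{d,k-1}}{\zeta(d-k+1)}\int_{\R^d}\phi$; comparing with the displayed identity forces $c_{d,k}=c_{d,k-1}/\zeta(d-k+1)$, completing the induction.

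The main obstacle I anticipate is the bookkeeping of measure normalizations: one must check that re-bracketing the tower of quotient integrals through the intermediate group $G_{k-1}$ is compatible (so that the relative formula genuinely produces the constant $c_{d,k}$), that the Lebesgue normalization on the $G_{k-1}$-orbit of $e_k$ is the one inherited from $\R^{dk}=\R^{d(k-1)}\times\R^d$, and that the measure induced on $X_{d-k+1}$ is indeed a scalar multiple of the probability Haar measure whose mass, together with the fibre mass, is $c_{d,k-1}$ (the individual masses never need to be identified, only their product). One also needs to pass from admissible $\phi$, where Poisson summation applies verbatim, to general $\phi\in L^1$ by a density argument. Finally, the classical Siegel formula with constant $\zeta(d-k+1)^{-1}$ can either be cited or re-derived from Poisson summation via the observation that $\Lambda\mapsto\sum_{v\in\Lambda\setminus 0}\phi(v)$ integrates to an $\SL$-invariant, hence Lebesgue-proportional, functional of $\phi$, whose constant is pinned to $1$ on the full sum (and to $\zeta(d-k+1)^{-1}$ on primitive vectors) by Fourier inversion.
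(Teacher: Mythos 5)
Your proposal is essentially correct, but it takes a genuinely different route from the paper. You induct on $k$: you fibre $G_{k-1}/\Gamma_{k-1}$ over $\SL(d-k+1,\R)/\SL(d-k+1,\Z)$ with a torus fibre, collapse the sum over vectors completing $(e_1,\dots,e_{k-1})$ to a primitive $k$-tuple by unfolding over that fibre, and then feed in the classical Siegel formula in dimension $d-k+1$ (constant $1/\zeta(d-k+1)$) to get the recursion $c_{d,k}=c_{d,k-1}/\zeta(d-k+1)$; the "relative Siegel formula" with constant exactly $c_{d,k}$ that you need is obtained by rerunning the quotient-integral argument of Theorem \ref{generalizedsiegel} one level down for the pair $G_k\subset G_{k-1}$, with $\mu_{G_{k-1}/\Gamma_{k-1}}$ of total mass $c_{d,k-1}$, and your normalization bookkeeping does work out. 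The paper instead inducts on the dimension: it unfolds $\int_{G/\Gamma}\sum_{v\in\Z^d}f(gv)$ using the decomposition of $\Z^d\setminus\{0\}$ into integer multiples of primitive vectors, computes the unfolded term explicitly in the $KA_tW_1U_1$ coordinates (spherical coordinates turning it into $\hat f(0)$ times a volume), and then compares with the same identity applied to $\hat f$, using Poisson summation together with the fact that $g\mapsto {}^tg^{-1}$ preserves the Haar measure on $G/\Gamma$, to force the recursion $\text{Vol}(\SL(d,\R)/\SL(d,\Z))=\zeta(d)\,\text{Vol}(\SL(d-1,\R)/\SL(d-1,\Z))$ and hence $\text{Vol}(G/\Ga)=\zeta(d)\cdots\zeta(d-k+1)\text{Vol}(G_k/\Ga_k)$. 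The trade-off: your argument is shorter and more structural, but it treats the classical Siegel constant $1/\zeta(d)$ as an external input (acceptable here since the paper cites Siegel's theorem, and your closing remark sketches how to reprove it, though only in outline); the paper's computation uses no Siegel formula at all and, as a by-product, actually establishes the classical constant, giving $\text{Vol}(\SL(d,\R)/\SL(d,\Z))=\zeta(2)\cdots\zeta(d)$ in the chosen normalization, so it is the more self-contained of the two.
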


\begin{proof}~\\
We start from the case when $k=1$. In this case,
\begin{align*}
    G_1:=&\text{Stab}_G\{e_1\}
        =\{g\in \SL(d,\R):ge_1=e_1\}
        =\begin{bmatrix}
            1 & \R^{1 \times (d-1)} \\
            0 & \SL(d-1,\R) 
         \end{bmatrix}\\
         \Gamma_1:=& \text{Stab}_\Gamma\{e_1\}
        =\{g\in \SL(d,\R):ge_1=e_1\}
        =\begin{bmatrix}
                1 & \Z^{1\times (d-1)} \\
                0 & \SL(d-1,\Z) 
              \end{bmatrix}
\end{align*}

For the computation, we first need to recall the Poiss

For $f\in C_c(\R^d)$, namely a countinuous function with compact support function. Futher more, assume that $f$ is $K$-invariant and $f(0)\ne \hat{f}(0):=\int_{\R^d}f(x)dx$. Such function exists. For example, there exists $\eta\in (0,1)$ such that
\begin{equation*}
    f(x)=
    \begin{cases}
        \frac{1-\eta \|x\|}{(1+\|x\|)^{d+1}} & \text{if } x \in B[0,1]\\
        0 & \text{if } x \notin B[0,1]
    \end{cases}
\end{equation*}
satisfy $f(0) \ne \hat{f}(0)$. Other properties are immediate.

\vspace{5mm}
Let 
$\tilde{F}(g):G\to \R$ be defined as 
$$\tilde{F}(g):=\sum_{v\in \Z^d}f(gv).$$

It follows that $\tilde{F}$ is bounded and for any $\gamma\in \Gamma=\SL(d,\Z)$,
$$F(g\gamma)=\sum_{\Z^d}f(gv\gamma)=\sum_{\Z^d}f(gv)=F(g).$$
The $\Gamma$-invariance of $\tilde{F}$ induces a function $F:G/\Gamma \to \R$ by
$$F(g\Gamma):=\sum_{v\in \Z^d}f(gv).$$

\vspace{0.5cm}
Consider the following decomposition of $\Z^d$:
\begin{equation}
    \Z^d=\{0\}\bigsqcup \bigsqcup_{\gamma \Gamma_1 \in \Gamma/\Gamma_1} \bigsqcup_{j=1}^\infty j e_1.
\end{equation}

It follows that $F\in C_c(G/\Ga)$ and that
\begin{align}
      \int_{G/\Gamma}F(g\Gamma)d(g\Gamma)
    =&\int_{G/\Gamma}\sum_{v\in \Z^d}f(gv) d(g\Gamma) \nonumber\\
    =&\int_{G/\Gamma}f(0)d(g\Ga)+\int_{G/\Gamma}\sum_{\ga\Ga_1 \in \Ga/\Ga_1}\sum_{j=1}^{\infty}f(jg\ga e_1) d(g\Ga_1) \nonumber\\
    =&\int_{G/\Gamma}f(0)d(g\Ga)+\int_{G/\Gamma_1}\sum_{j=1}^{\infty}f(jg\ga e_1) d(g\Ga_1) \nonumber\\
    =& f(0)\mu(G/\Ga)+\sum_{j=1}^{\infty} \int_{G/\Gamma_1}f(jg e_1) d(g\Ga_1)
    \label{eq:decomposition of Z^d}
\end{align}

To treat the section part of the sum above, we introduce the following subgroups :
\begin{align*}
    W_1:=&
    \begin{bmatrix}
    1 & 0 \\
    0 & SL(d-1,\R) 
    \end{bmatrix},\\  
    U_1:=&
    \begin{bmatrix}
    1 & \R^{1\times (d-1)} \\
    0 & I_{d-1}
    \end{bmatrix},\\
    A_t:=&\begin{bmatrix}
    t & 0 \\
    0 & t^{-\frac{1}{d-1}}
    \end{bmatrix}.
\end{align*}

The measures on them are canonical ones: $\mu_{W_1}$ is identified with the Haar measure on $\SL(d-1,\R)$, again defined through Iwasawa decomposition above; $\mu_{U_1}$ is identified with the Lebesgue measure on $\R^{d-1}$; and the measure $A_t$ is identified with $\frac{dt}{t}$ on $\R_{>0}$.

Clearly $G_1=W_1U_1$. 

\begin{claim} $t^d \frac{dt}{t} dw du$ defines a right invariant measure on $A_tW_1U_1$.
\end{claim}

\begin{proof}[Proof of Claim]\renewcommand{\qedsymbol}{\ensuremath{\#}}
Indeed, for any continuous function $f$ with compact support defined on $A_tW_1U_1$, identified with $\R_{t>0}\times \SL(d-1,\R) \times \R^{d-1}$ and $a=\text{diag}(t,t^{-\frac{1}{d-1}}I_{d-1})$, $w'\in W_1$ and $u'\in U_1$. Notice that $awa'^{-1}=w$ and the Jacobian of the map $u'\mapsto aua'^{-1}$ is $(t^{1+\frac{1}{d-1}})^{d-1}=t^{d-1}$, the same change of variable argument for the integral 
$$\int_A \int_{W_1} \int_{U_1} f(awua'w'u')dadwdu$$
gives the right invariance of $t^d \frac{dt}{t} dw du$.
\end{proof}

Observe that $$K\cap AW_1U_1=\begin{bmatrix}
1 & 0\\
0 & \text{SO}(d-1,\R)
\end{bmatrix}\cong \text{SO}(d-1,\R)$$ and that the map
$$K\times G_1 \to KG_1, (k,g) \mapsto k^{-1}g$$
has its fiber at the identity equal to $K\cap AW_1U_1\cong \text{SO}(d-1,\R)$,
we have by the quotient integral formula and the proof of Theorem 8.32 in \cite{KN02}, for any compactly supported continuous function $\phi$ on $G$,
$$\int_{G}\phi(jg e_1) d(g\Ga)=
\frac{1}{\text{Vol}(\text{SO}(d-1,\R))}\int_{KA_tW_1U_1}\phi(jkawu e_1) dk t^{d}da dw du.$$
Since any compactly supported function $\Phi$ on $G/\Ga_1$ can be expressed as
$$\Phi(g\Ga_1)=\sum_{\ga \in \Ga_1}\phi(g\ga),$$
for some compactly supported continuous function $\phi$ on $G$, we have by quotient integral formula and the uniqueness of Haar measure on homogeneous space $G/\Gamma_1$
$$\int_{G/\Gamma_1}f(jg e_1) d(g\Ga)=
\int_{KA_tW_1U_1/\Gamma_1}f(jkawu e_1) dk t^{d}da d(wu\Ga_1)$$
where $f\in C_c(\R^d)$ as above and $j\ge 1$. 

Now it follows from \ref{eq:decomposition of Z^d} that
\begin{align*}
    &\int_{G/\Gamma}F(g\Gamma)d(g\Gamma)\\
    =& f(0)\mu(G/\Ga)+\sum_{j=1}^{\infty} \int_{G/\Gamma_1}f(jg e_1) d(g\Ga_1)\\
    =& f(0)\mu(G/\Ga)+ \frac{1}{\text{Vol}(\text{SO}(d-1,\R))}\sum_{j=1}^{\infty}\int_{K}\int_{A_tW_1U_1/\Gamma_1}f(jkawue_1) dk t^{d}da d(wu\Ga_1) \\
    =& f(0)\mu(G/\Ga)+ \frac{\text{Vol}(\text{SO}(d,\R)}{\text{Vol}(\text{SO}(d-1,\R))}\sum_{j=1}^{\infty}\int_{A_tW_1U_1/\Gamma_1}f(jawue_1) t^{d}da d(wu\Ga_1) \tag{$f$ is $K$-invariant by assumption}\\
    =& f(0)\mu(G/\Ga)+ \text{Vol}(S^{d-1})\sum_{j=1}^{\infty}\int_{A_tW_1U_1/\Gamma_1}f(jawue_1) t^{d}da d(wu\Ga_1)\\
    =& f(0)\mu(G/\Ga)+ \text{Vol}(S^{d-1})\sum_{j=1}^{\infty}\int_{A_t}\int_{W_1U_1/\Gamma_1}f(jawue_1) t^{d}da\\ 
    =& f(0)\mu(G/\Ga)+ \text{Vol}(S^{d-1})\sum_{j=1}^{\infty}\int_{A_t}\int_{W_1U_1/\Gamma_1}f(jtwue_1) t^{d-1}dt\\ 
    =& f(0)\mu(G/\Ga)+ \text{Vol}(S^{d-1})\text{Vol}(\SL(d-1,\R)/\SL(d-1,\Z))\sum_{j=1}^{\infty}\int_{0}^{\infty}f(jte_1) t^{d-1}dt
    \tag{$\text{Vol}(\SL(d-1,\R)/\SL(d-1,\Z))=\text{Vol}(W_1U_1/\Ga_1)$ since $\text{Vol}(\R^d/\Z^d)$=1.}\\
    =& f(0)\mu(G/\Ga)+ \text{Vol}(S^{d-1})\text{Vol}(\SL(d-1,\R)/\SL(d-1,\Z)) \sum_{j=1}^{\infty}\frac{1}{j^d}\int_{0}^{\infty}f(te_1) t^{d-1}dt \\
    \tag{change of variable $t\mapsto \frac{t}{j}$}
\end{align*}
Note that the above only works $d>2$. For the case when $d=2$, this $\text{Vol}(\SL(d-1,\R)/\SL(d-1,\Z))$ has to be replaced by $1$.
\begin{claim}
$$\text{Vol}(S^{d-1})\int_{0}^{\infty}f(te_1) t^{d-1}dt=\hat{f}(0),$$
\end{claim}
where $\hat{f}(0)=\int_{\R^d}f(x)dx$ is the Fourier transform of $f$ at $0$. 
\begin{proof}[Proof of Claim]\renewcommand{\qedsymbol}{\ensuremath{\#}}
Indeed, notice that $f$ is $K$-invariant (rotation invariant), so 
$$f(te_1)=f(x), \forall x\in \R^d \text{ with }\|x\|=t.$$
By the spherical coordinate in $\R^d$, we have
\begin{align*}
x_1 &= r \cos(\varphi_1) \\
x_2 &= r \sin(\varphi_1) \cos(\varphi_2) \\
x_3 &= r \sin(\varphi_1) \sin(\varphi_2) \cos(\varphi_3) \\
    &\,\,\,\vdots\\
x_{d-1} &= r \sin(\varphi_1) \cdots \sin(\varphi_{d-2}) \cos(\varphi_{d-1}) \\
x_d     &= r \sin(\varphi_1) \cdots \sin(\varphi_{d-2}) \sin(\varphi_{d-1}) .
\end{align*}
where $0 \le \phi_{d-1}\le 2\pi$ and $0 \le \phi_{i}\le \pi$ for all $i\le d-1$.
and
\begin{align*}
    \int_{\R^d}f(x)dx
    =& \int_{0}^{\infty}\cdots\int_{0}^{\infty}f(x_1,\dots,x_d)dx_1\cdots dx_d\\
    =& \int_{[0,\pi]^{d-1}\times [0,2\pi]}\int_{0}^{\infty} f(x_1,\dots,x_d)\left|\det\frac{\partial (x_i)}{\partial\left(r,\varphi_j\right)}\right|
dr\,d\varphi_1 \, d\varphi_2\cdots d\varphi_{d-1} \\
=& \int_{[0,\pi]^{d-1}\times [0,2\pi]}\int_{0}^{\infty} f(re_1)
r^{d-1}\sin^{d-2}(\varphi_1)\sin^{d-3}(\varphi_2)\cdots \sin(\varphi_{d-2})\,
dr\,d\varphi_1 \, d\varphi_2\cdots d\varphi_{n-1}\\
=& \text{Vol}(S^{d-1})\int_{0}^{\infty} f(re_1)
r^{d-1}dr.
\end{align*}
\end{proof}
Therefore, we have
\begin{align}
    \int_{G/\Gamma}F(g\Gamma)d(g\Gamma)   =& f(0)\mu(G/\Ga)+ \text{Vol}(S^{d-1})\text{Vol}(\SL(d-1,\R)/\SL(d-1,\Z)) \sum_{j=1}^{\infty}\frac{1}{j^d}\int_{0}^{\infty}f(te_1) t^{d-1}dt \nonumber \\
    =& f(0)\mu(G/\Ga)+ \hat{f}(0)\text{Vol}(\SL(d-1,\R)/\SL(d-1,\Z)) \sum_{j=1}^{\infty}\frac{1}{j^d} \nonumber \\
    =& f(0)\mu(G/\Ga)+ \hat{f}(0)\text{Vol}(\SL(d-1,\R)/\SL(d-1,\Z)) \zeta(d). \label{eq:recursive relation}
\end{align}

In other to find $\text{Vol}(\SL(d-1,\R)/\SL(d-1,\Z))$, we shall look at the dual version of the above equation.

For any $g\in G$, $g\Z^d$ defines a lattice and its dual lattice is $g^*\Z^d={}^tg^{-1}\Z^d$ (Proposition \ref{dual is same as transpose inverse}). But the automorphism
$$*:G\to G, g\mapsto g^*$$

clearly preserves the Haar measure on $G$ and $\ga \Z^d=\ga^* \Z^d$ for all $\ga \in \Ga$. So it also preserves the Haar measure on $G/\Ga$.

On the other hand, by the Poisson summation formula:
$$F(g\Gamma)=\sum_{v\in \Z^d}f(gv)=\sum_{v\in \Z^d}\hat{f}(g^*v)=:\hat{F}(g^*).$$

Since $\hat{\hat{f}}(0)=f(0)$, by replacing $f$ in the recursion equation \ref{eq:recursive relation} by $\hat{f}$, we have
\begin{align*}
     & f(0)\mu(G/\Ga)+ \hat{f}(0)\text{Vol}(\SL(d-1,\R)/\SL(d-1,\Z)) \zeta(d)\\
    =& \int_{G/\Gamma}F(g\Gamma)d(g\Gamma)
    =\int_{G/\Gamma}\hat{F}(g\Gamma)d(g\Gamma) \\
    =& \hat{f}(0)\mu(G/\Ga)+ \hat{\hat{f}}(0)\text{Vol}(\SL(d-1,\R)/\SL(d-1,\Z)) \zeta(d)\\
    =& \hat{f}(0)\mu(G/\Ga)+ f(0).
\end{align*}
Since we have chosen $f(0)=\hat{f}(0)$ at the beginning, this yields:
$$\text{Vol}(G/\Ga)=\text{Vol}(\SL(d,\R)/\SL(d,\Z))=\zeta(d)\text{Vol}(\SL(d-1,\R)/\SL(d-1,\Z))=\zeta(d)\text{Vol}(G_1/\Ga_1),$$
for $d>2$ and with our discussion above (before the claim), 
$$\text{Vol}(\SL(2,\R)/\SL(2,\Z))=\zeta(2).$$ 
By induction, we have 
$$\text{Vol}(G/\Ga)=\zeta(d)\cdots \zeta(d-k+1)\text{Vol}(G_k/\Ga_k)$$
This gives the constant we need for the generalized Siegel formula as well as
$$\text{Vol}(G/\Ga)=\zeta(d)\cdots \zeta(2).$$
\end{proof}

\section*{Acknowledgement}
The author would like to thank Professors Michael Bersudsky and Nimish Shah for their encouragement and helpful discussions on this project. 

\printbibliography[
heading=bibintoc,
title={Bibliography}
]
\end{document}